\numberwithin{equation}{section}
\newtheorem{theorem}{Theorem}[section]
\newtheorem{thm}[theorem]{Theorem}
\newtheorem{lemma}[theorem]{Lemma}
\newtheorem{lem}[theorem]{Lemma}
\numberwithin{equation}{section}
\numberwithin{equation}{section}
\newtheorem{pro}[theorem]{Proposition}
\newtheorem{co}[theorem]{Corollary}
\newtheorem{rem}[theorem]{Remark}
\def\thl{\text{-H\"ol}}
\def\de{\delta}
\def\si{{\sigma}}
\def\RR{\mathbb{R}}
\def\EE{\mathbb{E}}
\def\de{{\delta}}
\def\si{{\sigma}}
\def\de{{\delta}}
\def\si{{\sigma}}
\def\vare{{\varepsilon}}
\def\th{{\theta}}
\def \comb#1#2{{\left({#1}\atop{#2}\right)}}
\def\dom{{\rm Dom}}
\def\cov{{\rm Cov}}
\def\sgn{{\rm sgn}}
\begin{document}
	
\date{}
\title{ Parameter estimation for fractional Ornstein-Uhlenbeck processes
of general Hurst parameter}

  \author{  {\sc Yaozhong Hu}\thanks{Y.  Hu is
partially supported by a grant from the Simons Foundation
\#209206.},   \;\;     {\sc David Nualart}\thanks{D. Nualart is supported by the NSF grant  DMS1208625.  
\newline
\noindent{\it   AMS 2010 subject classification.}
	Primary   62M09; Secondary   60F05,  60G22,  60H07,  62F10,  62F12.
\newline 
\noindent { \textbf{Keywords.}  fractional Brownian motion, fractional Ornstein-Uhlenbeck processes,  parameter estimators,  
power variation, least squares,  fourth moment theorem, 
 central limit theorem, noncentral limit theorem, Rosenblatt random variable.} 
 },\;\; and \;\;  {\sc Hongjuan Zhou} }

 \maketitle
 \begin{abstract}

This paper provides  several  statistical estimators
for the  drift and  volatility parameters of  an Ornstein-Uhlenbeck process 
driven by fractional Brownian motion,  whose observations  
 can be made either continuously or at discrete time instants.  
First   and higher order power variations   are
used  to estimate  the volatility
parameter. The almost sure convergence of the estimators and the  corresponding 
 central limit theorems are obtained  
for all the  Hurst parameter range $H\in (0, 1)$.   
The least squares  estimator  is
used for   the drift parameter.
A    central limit theorem  is proved 
when the Hurst parameter $H \in (0, 1/2)$
and a noncentral limit theorem is proved for $H\in[3/4, 1)$.  Thus,  the open problem left in the previous paper \cite{hn} is completely solved,  where a central limit theorem for least squares estimator is proved for $H\in [1/2, 3/4)$.  
\end{abstract}

\vspace{10pt}

\numberwithin{equation}{section}

\section{Introduction}

Consider the  fractional Ornstein-Uhlenbeck process  
defined as the unique 
pathwise solution to the  stochastic differential equation
  \begin{equation}
  d X_t =   - \theta  X_t dt +   \sigma_t dB_t^H\,,  
  \label{e.ou-time-varying}
  \end{equation}
  with initial condition $X_0\in \mathbb{R}$, where $B^H=\{B^H_t, t\ge 0\}$ is a fractional Brownian  (fBm) motion of  Hurst parameter
  $H\in (0,1)$, $\theta$ is a positive parameter and 
the volatility $\si_t$  is a stochastic process with $\beta$-H\"older continuous trajectories, where $\beta>1-H$.
Under this  condition on $\si_t$, the stochastic integral $\int_0^t \sigma_s dB^H_s$ is well defined as a
pathwise
Riemann-Stieljes integral (see, for instance, \cite{yo}) and the above stochastic differential equation has a unique solution.

Assume that the parameters $\th>0$  and   $\si_t$ are unknown and that the process
 can be observed continuously or at discrete time
instants. We want to estimate the integrated
 volatility $\int_0^t |\si_s|^pds$ and the drift parameter $\theta$ for any $H \in (0,1)$.  We assume that the Hurst parameter $H$ is known
or it can be estimated by other methods (for example, see \cite{bhoz} and the references therein).

In the paper \cite{cnw},    Nualart, Corcuera and Woerner  studied  the asymptotic
behavior of the power variation of
   the stochastic integral $Z_t=\int^t_0 u_s dB^H_s$,  defined as  $V^n_{p}(Z)_t=\sum^{[nt]}_{i=1} |  Z_{i/n}-Z_{(i-1)/n}|^p$ for any $p>0$. They proved that 
   if the process $u=\{u_t, t\ge 0\}$ has finite $q$-variation on any finite interval, for some $q< 1/(1-H)$, then,   as $n\rightarrow \infty$,  
   \[
  n^{-1+pH} V^n_{p}(Z)_t\rightarrow c_{1,p}\int_0^t|u_s|^pds 
  \]
  uniformly in probability in any compact sets of $t$, where $c_{1,p}=\mathbb{E}[|B^H_1|^p]$.
    The corresponding  central limit theorem was also obtained
 for  $H \in (0,\frac{3}{4}]$.
 These  results can be applied to construct an estimator {
 based on the power variation of $\int_0^t \sigma_s dB^H_s$ to estimate} the integrated
 volatility $\int_0^t |\si_s|^pds$  
 when $H \in (0,\frac{3}{4}]$.  However, the condition $H \in (0,\frac{3}{4}]$ is  critical in \cite{cnw}.  The first objective of this paper is to remove this restriction. To this end,
  we shall use higher order power variations defined as
 $V^n_{k,p}(Z)_t = \sum^{[nt]-k+1}_{i=1}
 \left |    \sum_{j=0}^k
 (-1)^{k-j}\comb{k}{j} Z_{(i+j-1)/n} \right |^p$, for any integer $k\ge 1$.
 In Section
 \ref{s.power-variation}, we study the asymptotic behavior of these higher order power variations of the general stochastic integral $Z_t = \int_0^t u_s dB_s^H$.   The application of these results  to estimate
  the integrated volatility are presented in Section \ref{s.volatility}.      In particular,
when $\si_t=\si$ we can use $\displaystyle
\left|\hat\sigma_T\right|^p = \displaystyle\frac{n^{-1+pH}V^n_{k,p}(X)_T}{c_{k,p} T}$
to estimate $\si$,  where $c_{k,p}$ is the constant introduced in (\ref{ckp}). The uniform convergence in probability and central limit theorems
of the estimators for both the integrated volatility  and  the volatility itself are established.

It is worth mentioning that the statistical estimation of  the integrated volatility has already
been studied in the recent decades. Barndorff-Nielsen {\it et} {\it al} \cite{bcp} - \cite{bgj}) studied estimation of  volatility for Brownian semimartingale and
Brownian semi-stationary processes by using  power, bipower, or multipower variations.  
However, those results cannot be applied to the fractional Ornstein-Uhlenbeck process due to its  lack of  the semimartingale property.

As for  the drift parameter $\theta$, several estimators have been proposed previously.  A summary of 
some relevant results are presented below.
\begin{description}
\item[(i)]\  In the case of continuous observations,
Kleptsyna and Le Breton (\cite{kl}) studied  the maximum likelihood estimator (MLE) which is defined by 
\[
 \hat\theta_{MLE} = - \left\{ \int_0^T Q^2(s) dw_s^H\right\}^{-1} \int_0^T Q(s) dZ_s \,,
\]
where
\[
Q(t) = \frac{d}{dw_t^H} \int_0^t k_H(t,s)X_s ds, \quad Z_t = \int_0^t k_H(t,s) dX_s, 
\]
$k_H(t,s) = \kappa_H^{-1} s^{\frac{1}{2}-H} (t-s)^{\frac{1}{2} - H}$ 
and  $w_t^H = \lambda_H^{-1} t^{2-2H}$ with constants $\kappa_H$ and $\lambda_H$   depending on $H$.
They proved  the  almost sure convergence of $\hat\theta_{MLE}$ to $\theta$ as $T$ tends to infinity.  It is worth noting that Tudor and Viens (\cite{tv}) have also obtained the almost sure convergence of both the MLE and  a version of the MLE using discrete observations  for all $H \in (0,1)$.  
Bercu, Courtin and Savy proved in \cite{bcs} the  following   central limit theorem for the MLE in the case of $H>\frac{1}{2}$:
\[
  \sqrt{T} (\hat\theta_{MLE} - \theta) \xrightarrow[T \to \infty]{\mathcal{L}} N(0, 2\theta) \,.
\]
They claimed without proof that the above convergence is also valid for $H \in (0,\frac{1}{2})$. 

\item[(ii)]  On the other hand, Hu and Nualart (\cite{hn}) proposed the least square estimator defined by
\begin{equation}
\hat\theta_T =-\displaystyle\frac{\int^T_0 X_t dX_t}{\int^T_0 X_t^2 dt}=\theta-\sigma \frac{\int^T_0 X_t dB_t^H}{\int^T_0 X_t^2 dt}\,, \label{e.1.2}
\end{equation}
where  the integral with respect to $B^H$  is interpreted in the Skorohod sense.
 They also introduced another estimator  $\widetilde\theta_T$ based on
the ergodic theorem  given by
\begin{equation}
\widetilde{\theta}_T = \Big ( \displaystyle\frac{1}{\sigma^2 H\Gamma(2H)T} \int^T_0 X^2_t dt \Big)^{-\frac{1}{2H}}\,.
\end{equation}
Almost sure convergence and central limit theorems for these two estimators have been proved for $H\in [\frac{1}{2}, \frac{3}{4})$.
\end {description}

However, when  $H\in (0, \frac{1}{2}) \cup [\frac{3}{4}, 1)$, the central limit theorems for  the least square estimator  $\hat\theta_T$ have not been known  yet. The first objective of  Section \ref{s.drift} is to prove the asymptotic consistency of $\hat\theta_T$  by using a new method,  different from that in \cite{hn},  which is valid for all $H \in (0,1)$. This method involves the relationship between the divergence  and Stratonovich integrals  and the integration by parts technique and it is based on the pathwise properties of the fractional Ornstein-Uhlenbeck process established  in a  paper \cite{ckm} by Cheridito, Kawaguchi and Maejima. The next and the main objective of this paper is 
to establish a central limit theorem for the least
square estimator $\hat\theta_T$ for $H\in (0, \frac{1}{2}) $
and  a noncentral limit theorem for $H\in [\frac{3}{4}, 1)$.  In the later case, we can identify
the limit as a Rosenblatt random variable. We will make a comparison of the asymptotic variance for these three estimators and show that the least square estimator performs better than the maximum likelihood estimator when $H \in (0,\frac{1}{2})$. Since the ergodic-type estimator $\widetilde\theta_T$ is a function of a pathwise Riemann integral that appears simpler than the other two estimators, we will use  $\widetilde\theta_T$ to construct a consistent estimator $\bar\theta_n$ for  high frequency data (if only discrete observations are available). The asymptotic behavior of $\bar\theta_n$ in this case is also studied in this paper.
The proofs of our results are highly  technical and rely on some sophisticated computation, which we shall put in the Appendix.
The main tool we use is Malliavin calculus which is recalled
in Section \ref{s.preliminary}.  We use $C$ to denote a generic constant that may vary according to the context.

\section{Preliminaries}\label{s.preliminary}

In this section, we briefly recall some notions and results
on fractional Brownian motion, $p$-variation, and Malliavin  calculus.

The fractional Brownian motion (fBm) $B^H=\{ B^H_t, t \in \mathbb{R} \}$ with Hurst parameter $H \in (0,1)$ is a zero mean Gaussian process, defined on a complete probability space $(\Omega, \mathcal{F}, P)$, with the following covariance function
\begin{equation}
 \mathbb{E}(B^H_tB^H_s)= R_H(t,s)=\frac{1}{2}(|t|^{2H}+|s|^{2H}-|t-s|^{2H}).
 \label{1eq1}
\end{equation}
From \eqref{1eq1}, it is easy to see that $\mathbb{E}|B_t^H-B_s^H|^2 = |t-s|^{2H}$.  Then it follows from Kolmogorov's continuity criterion that on any finite interval, almost surely all paths 
of fBm are $\alpha$-H\"older continuous with $\alpha < H$.  Denote by $\eta_T$  the $\alpha$-H\"older coefficient of fBm on the interval $[0,T]$, i.e., 
\begin{equation}\label{hd.fbm}
  \eta_T = \sup_{t \neq s \in [0,T]} \frac{\left|B_t^H - B_s^H\right|}{|t-s|^{\alpha}} \,.
\end{equation}
Clearly, $\mathbb{E} |\eta_T|^q = T^{q(H-\alpha)} \mathbb{E} |\eta_1|^q$ for any $q>1$, by the self-similarity property of fBm.\\

Let   $\mathcal{F}$ denote the $\sigma$-field obtained from  the completion of the $\sigma$-field generated by   $B^H$.  Let $\mathcal{E}$
denote the space of all real valued step functions on $\mathbb{R}$. The Hilbert space $\mathfrak{H}$ is defined as the closure of $\mathcal{E}$ endowed with the inner product
\[
\langle \mathbf{1}_{[a,b]}, \mathbf{1}_{[c, d]}\rangle_{\mathfrak{H}}= \mathbb{E} \left((B_b^H - B_a^H)(B_d^H - B_c^H)\right) \,.
\]
Under the convention that $\mathbf{1}_{[0,t]} = - \mathbf{1}_{[t,0]}$ if $t<0$, the mapping $\mathbf{1}_{[0,t]} \mapsto B^H_t$ can be extended to a linear isometry between $\mathfrak{H}$ and the Gaussian space $\mathcal{H}_1$ spanned by $B^H$. We denote this isometry by $\mathfrak{H} \ni  \varphi \mapsto B^H(\varphi)$.
If $f, g \in \mathfrak{H}$ and $g$ is  a continuously differentiable function with compact support, we can use step functions in $\mathcal{E}$ to approximate $f$ and $g$ and  by a limiting  argument we deduce
\begin{equation} \label{hinnerp}
   \langle f, g \rangle_{\mathfrak{H}} = \int_{\mathbb{R}^2} f(t)g'(s)\frac{\partial R_H(t,s)}{\partial t} dtds
\end{equation}
(see \cite{hjt}). We can also use Fourier transform to compute $ \langle f, g \rangle_{\mathfrak{H}}$, namely,
\begin{equation}
 \langle f, g \rangle_{\mathfrak{H}}= \frac{1}{c_H^2} \int_{\mathbb{R}} \mathcal{F} f(\xi) \overline{\mathcal{F} g(\xi)} |\xi|^{1-2H} d \xi, \label{sfbm}
\end{equation}
where $c_H = \Big(\frac{2\pi}{\Gamma(2H+1)\sin(\pi H)}\Big)^{\frac{1}{2}}$ (see \cite{pta}). When $H>1/2$, for any $f,g \in L^{1/H}([0,T])$, if we extend $f$ and $g$ to be zero on $\mathbb{R} \cap [0,T]^c$, then $f,g\in \mathfrak{H}$ and we have the following simple identity
\begin{equation} \label{ip.ghalf}
	\langle f, g \rangle_{\mathfrak{H}} = \alpha_H \int_{[0,T]^2} f(u) g(v) |u-v|^{2H-2} du dv \,,
\end{equation}
where $\alpha_H = H(2H-1)$.

For any $p>0$,  the $p$-variation of a real-valued function $f$ on an interval $[a,b]$ is defined as
\[
{\rm var}_p(f;[a,b])={\rm sup}_{\pi}\Big(\sum_{i=1}^n|f(t_i)-f(t_{i-1})|^p\Big)^{1/p}\,,
\]
where the supremum runs over all partitions $\pi=\{a=t_0<t_1<\cdots<t_n=b\}$.
If $f$ is $\alpha$-H\"{o}lder continuous on the interval $[a, b]$,  $\alpha \in (0,1]$, then we set
\[
\|f\|_{\alpha}:= {\rm sup}_{a\leq s<t \leq b}\frac{|f(t)-f(s)|}{|t-s|^{\alpha}}\,.
\]
It is known that an $\alpha$ \thl der continuous function $f$ on the interval $[a, b]$ has finite $1/\alpha$-variation on this  interval.
If $f$ and $g$ have finite $p$-variation and finite $q$-variation on the interval $[a,b]$ respectively and $1/p+1/q>1$, the Riemann-Stieltjes integral $\int_a^b fdg$ exists (see Young \cite{yo}).  
By Young's result, the stochastic integral $\int_0^t u_s dB_s^H$ is well defined as a pathwise Riemann-Stieltjes integral provided that the trajectories of the process $\{u_t, t\geq 0\}$ have finite $q$-variation on any finite interval for some $q<1/(1-H).$ 

Next we define two types of stochastic integrals: Stratonovich integral and divergence integral. Given a stochastic process $\{v(t), t\geq 0\}$ such that $\int_0^t |v(s)|ds < \infty$ a.s. for all $t>0$, the Stratonovich integral $\int_0^t v(s) \circ dB_s^H$
 is defined as the following limit in probability if it exists
 \[
   \lim_{\epsilon \to 0}\int_0^t v(s) \dot{B}_s^{H,\vare}  ds \,,
 \]
 where $\dot{B}_s^{H,\vare}$ is a symmetric approximation of $\dot{B}_s^{H }$: 
 \[
 \dot{B}_s^{H,\vare}= \frac{1}{2\epsilon} (B_{s+\epsilon}^H - B_{s-\epsilon}^H) \,.
 \]

Before we  define the divergence integral, we present  some background of Malliavin calculus. For a smooth and cylindrical random variable $F= f(B^H(\varphi_1), \dots , B^H(\varphi_n))$, with $\varphi_i \in \mathfrak{H}$ and $f \in C_b^{\infty}(\mathbb{R}^n)$ ($f$ and all of its partial derivatives are bounded), we define its Malliavin derivative as the $\mathfrak{H}$-valued random variable given by
\[
 DF = \sum_{i=1}^n \frac{\partial f}{\partial x_i} (B^H(\varphi_1), \dots, B^H(\varphi_n))\varphi_i.
 \]
By iteration, one can define the $k$-th derivative $D^k F$  as an element of $L^2(\Omega; \mathfrak{H}^{\otimes k})$. For any natural number $k$ and any real number $ p \geq 1$, we define  the Sobolev space $\mathbb{D}^{k,p}$  as the closure of the space of smooth and cylindrical random variables with respect to the norm $||\cdot||_{k,p}$ defined by 
\[
||F||^p_{k,p} = \mathbb{E}(|F|^p) + \sum_{i=1}^k \mathbb{E}(||D^i F||^p_{\mathfrak{H}^{\otimes i}}).
\]

The divergence operator $\delta$ is defined as the adjoint of the derivative operator $D$ in the following manner.  An element $u \in L^2(\Omega; \mathfrak{H})$ belongs to the domain of $\delta$, denoted 
by $\dom\, \delta$, if there is a constant $c_u$ depending on $u$ such that 
\[
|\mathbb{E} (\langle DF, u \rangle_{\mathfrak{H}})| \leq c_u ||F||_{L^2(\Omega)}
\] for any $F \in \mathbb{D}^{1,2}$.  If $u \in \dom \,\delta$, then the random variable $\delta(u)$ is defined by the duality relationship 
\[
\mathbb{E}(F\delta(u)) = \mathbb{E} (\langle DF, u \rangle_{\mathfrak{H}}) \, ,
\] 
which holds for any $F \in \mathbb{D}^{1,2}$.  
If $u=\{u_t, t\in [0,T]\}$ is a stochastic process, whose trajectories belong to $\mathfrak{H}$ almost surely (with the convention $u_t=0$ if  $t\not\in [0,T]$) and $u\in \dom \, \delta$, we make use of the notation $ \int_0^T u_t dB_t^H=\delta(u)$ and call $\delta(u)$ the   divergence integral of $u$ with respect to the fractional Brownian motion $B^H$ on $[0,T]$.
It is worth noting that the  divergence integral of fBm with respect to itself does not exist if $H \in (0,\frac{1}{4})$ because the paths of  the fBm are too irregular (see \cite{cn}). For this reason, in \cite{cn} the authors introduce an extended divergence integral $\delta^*$ such that $\dom \,\delta^* \cap  L^2 (\Omega; \mathfrak{H}) = \dom\, \delta$ and the extended divergence operator $\delta^*$ restricted to $\dom\, \delta$  coincides with the  divergence operator.
In a similar way we can introduce the iterated divergence operator $\delta^k$ for each integer $k\ge 2$, defined by the duality relationship 
\[
\mathbb{E}(F\delta^k(u)) = \mathbb{E}  \left(\langle D^kF, u \rangle_{\mathfrak{H}^{\otimes k}} \right) \, ,
\] 
 for any $F \in \mathbb{D}^{k,2}$, where $u\in \dom \, \delta^k \subset L^2(\Omega; \mathfrak{H}^{\otimes k})$.

For any integer $m \geq 1$, we use $\mathfrak{H}^{\otimes m}$ and $\mathfrak{H}^{\odot m}$ to denote the $m$-th tensor product and the 
$m$-th symmetric tensor product of the Hilbert space $\mathfrak{H}$, respectively. We denote by $\mathcal{H}_m$ the closed linear subspace of $L^2(\Omega)$ generated by the random variables $\{H_m(B^H(\varphi)): \varphi \in \mathfrak{H}, ||\varphi||_{\mathfrak{H}}=1\}$, where $H_m$ is the $m$-th Hermite polynomial defined by 
\[
H_m(x)=\frac{(-1)^m}{m!}e^{\frac{x^2}{2}}\frac{d^m}{dx^m}e^{-\frac{x^2}{2}},\quad m \geq 1,
\]
and $H_0(x)=1$. The space $\mathcal{H}_m$ is called the Wiener chaos of order $m$. The $m$-th multiple integral of $\varphi \in \mathfrak{H}^{\odot m}$ is defined by the identity $I_m(\varphi) = \delta^m (\varphi)$, and in particular,  $ I_m(\phi^{\otimes m}) = H_m(B^H(\phi))$ for any $\phi\in \mathfrak{H}$. The map $I_m$ provides a linear isometry between $\mathfrak{H}^{\odot m}$(equipped with the norm $\frac{1}{\sqrt{m!}}||\cdot||_{\mathfrak{H}^{\otimes m}}$) and $\mathcal{H}_m$ (equipped with $L^2(\Omega)$ norm) (see \cite{np}, Theorem 2.7.7). By convention, $\mathcal{H}_0 = \mathbb{R}$ and $I_0(x)=x$.

Let us recall the definition of the Rosenblatt process that will  appear in the the limit theorems of Section \ref{s.drift}.    Fix $H>3/4$ and $t \in [0,1]$. Consider the sequence  of functions of two variables 
\[
\xi_{n,t} = 2^n\sum\limits_{i=1}^{[2^n t]} {\bf 1}_{((i-1)2^{-n}, i2^{-n}]}^{\otimes 2} \,.
\]
Through a direct computation using \eqref{ip.ghalf} one can show that this sequence is   Cauchy in $\mathfrak{H}^{\otimes{2}}$ and converges to  distribution denoted by  $\delta_{0,t}$ and defined by
\begin{equation} \label{ddel.def}
 \langle \delta_{0,t} , f\rangle = \int_0^t f(s,s)ds,  
\end{equation} 
for any test function $f$ on $\mathbb{R}^2$. It turns out (see   \cite{nnt} for the proofs)  that    the sequence $I_2(\xi_{n,t})$ converges in $L^2$ as $n$ tends to infinity to the Rosenblatt random variable $R_t = I_2(\delta_{0,t})$.   For any $f \in L^{1/H} ([0,1]^2)$,  we have the following formula, letting $f$ equal to zero on $\mathbb{R}^2 \cap [0,1]^c$, 
\begin{equation} \label{ip.rosenb}
	 \mathbb{E} (R_t I_2(f)) = 2 \langle \delta_{0,t}, f  \rangle_{\mathfrak{H}^{\otimes 2}} = 2\alpha_H^2 \int_0^t dv \int_{[0,1]^2} f(u_1, u_2) |u_1 - v|^{2H-2} |u_2 - v|^{2H-2} du_1 du_2 \,.
\end{equation}

The space $L^2(\Omega)$ can be decomposed into the infinite orthogonal sum of the spaces $\mathcal{H}_m$, which is known as the Wiener chaos expansion. Thus, any square integrable random variable $F \in L^2(\Omega)$ has the following expansion,
\[
  F = \sum_{m=0}^{\infty} I_m (f_m) ,
\]
where $f_0 = \mathbb{E}(F)$, and $f_m \in \mathfrak{H}^{\odot m}$ are uniquely determined by $F$. We denote by $J_m$ the orthogonal projection onto  the $m$-th Wiener chaos $\mathcal{H}_m$. This means 
that  $I_m (f_m) = J_m (F)$ for every $m \geq 0$.

Let $\{e_k, k \geq 1\}$ be a complete orthonormal system in the Hilbert space $\mathfrak{H}$. Given $f \in \mathfrak{H}^{\odot n}, g \in \mathfrak{H}^{\odot m}$, and $p=1,\dots,n \wedge m$, the $p$-th contraction between $f$ and $g$ is the element of $\mathfrak{H}^{\otimes (m+n-2p)}$ defined by
\begin{equation*}
 f \otimes_p g = \sum_{i_1,\dots,i_p=1}^{\infty} \langle f,e_{i_1} \otimes \cdots \otimes e_{i_p} \rangle_{\mathfrak{H}^{\otimes p}} \otimes \langle g,e_{i_1} \otimes  \cdots  \otimes e_{i_p} \rangle_{\mathfrak{H}^{\otimes p}} \,.
\end{equation*}

The following result (known as the fourth moment theorem)  provides necessary and sufficient conditions for the  convergence of some  random variables to a normal distribution (see \cite{no,dngp,np}).
\begin{thm} \label{fm.theorem}
 Let $ n \geq 2 $ be a fixed integer. Consider  a collection of elements $\{f_{T}, T>0\}$ such that $f_{T} \in \mathfrak{H}^{\odot n}$ for every $T>0$. Assume further that
 \[
  \lim_{T \to \infty}\mathbb{E} [I_n (f_T) ^2] = \lim_{T \to \infty} n! \|f_T\|^2_{\mathfrak{H}^{\otimes{n}}} = \sigma^2 .
 \]
Then the following conditions are equivalent:
 \begin{enumerate}
	\item $\lim_{T \to \infty} \mathbb{E}[I_n(f_T)^4] = 3\sigma^2$.
	\item For every $p=1,\dots, n-1$, $\lim_{T \to \infty}||f_T \otimes_p f_{T}||_{\mathfrak{H}^{\otimes 2(n-p)}} = 0$.
    \item As $T $ tends to infinity, the $n$-th multiple integrals $\{I_n (f_T), T \geq 0\}$ converge in distribution to a standard Gaussian random variable $N(0,\sigma^2)$.
    \item $\|D(I_n(f_T))\|_{\mathfrak{H}}^2 \xrightarrow[T \to \infty]{L^2(\Omega)} n \sigma^2$.
 \end{enumerate}	
\end{thm}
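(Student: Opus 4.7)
The plan is to establish the standard circular chain of implications $(2) \Rightarrow (4) \Rightarrow (3) \Rightarrow (1) \Rightarrow (2)$, using three ingredients from Malliavin calculus: the product formula for multiple Wiener--It\^o integrals, the derivation rule $D_s I_n(f_T) = n I_{n-1}(f_T(\cdot,s))$, and Stein's method combined with hypercontractivity on a fixed Wiener chaos.

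First I would handle the equivalence $(1) \Leftrightarrow (2)$. Writing
\[
I_n(f_T)^2 = \sum_{p=0}^n p!\binom{n}{p}^2 I_{2n-2p}\bigl(f_T \widetilde{\otimes}_p f_T\bigr)
\]
via the product formula and then applying the isometry $\mathbb{E}[I_m(g)^2] = m!\|g\|_{\mathfrak{H}^{\otimes m}}^2$ to the squared chaos expansion produces a finite identity of the form $\mathbb{E}[I_n(f_T)^4] = 3(n!\|f_T\|_{\mathfrak{H}^{\otimes n}}^2)^2 + \sum_{p=1}^{n-1} a_{n,p}\,\|f_T \widetilde{\otimes}_p f_T\|_{\mathfrak{H}^{\otimes 2(n-p)}}^2$, with strictly positive combinatorial coefficients $a_{n,p}$. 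Since the symmetrized and un-symmetrized contraction norms are controlled by each other up to positive constants, the vanishing of the full residual sum is equivalent to the vanishing of each $\|f_T \otimes_p f_T\|_{\mathfrak{H}^{\otimes 2(n-p)}}^2$ for $1 \leq p \leq n-1$, which is precisely the content of (1) $\Leftrightarrow$ (2) (modulo the identification $n!\|f_T\|^2 \to \sigma^2$ giving the Gaussian fourth moment $3\sigma^4$).

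For $(2) \Leftrightarrow (4)$, applying $D_s I_n(f_T) = n I_{n-1}(f_T(\cdot,s))$ and the product formula once more yields an explicit chaos decomposition of $\|DI_n(f_T)\|_{\mathfrak{H}}^2$; taking expectations recovers $\mathbb{E}\|DI_n(f_T)\|_{\mathfrak{H}}^2 = n \cdot n!\|f_T\|_{\mathfrak{H}^{\otimes n}}^2 \to n\sigma^2$, so that $L^2$-convergence of $\|DI_n(f_T)\|_{\mathfrak{H}}^2$ to the deterministic limit $n\sigma^2$ is equivalent to the vanishing of its variance. By orthogonality of Wiener chaoses this variance is again a positive linear combination of $\|f_T \otimes_p f_T\|_{\mathfrak{H}^{\otimes 2(n-p)}}^2$ for $1 \leq p \leq n-1$, matching exactly the quantities in (2).

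To close the loop I would prove $(4) \Rightarrow (3)$ by Stein's method: the integration-by-parts identity
\[
\mathbb{E}\bigl[I_n(f_T)\,\varphi(I_n(f_T))\bigr] = \frac{1}{n}\,\mathbb{E}\bigl[\varphi'(I_n(f_T))\,\|DI_n(f_T)\|_{\mathfrak{H}}^2\bigr],
\]
valid for smooth bounded $\varphi$, combined with the Stein equation for $N(0,\sigma^2)$, gives the quantitative bound $d_{\mathrm{TV}}\bigl(I_n(f_T),\,N(0,\sigma^2)\bigr) \leq C\,\mathbb{E}\bigl|\sigma^2 - n^{-1}\|DI_n(f_T)\|_{\mathfrak{H}}^2\bigr|$, whose right-hand side vanishes under (4). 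Finally, $(3) \Rightarrow (1)$ follows from the hypercontractivity inequality $\mathbb{E}|I_n(f_T)|^q \leq C_{n,q}(\mathbb{E}[I_n(f_T)^2])^{q/2}$ on the $n$-th Wiener chaos, which guarantees uniform integrability of $\{I_n(f_T)^4\}_T$ and hence upgrades convergence in law to convergence of the fourth moment. The main obstacle is the combinatorial bookkeeping of $(1) \Leftrightarrow (2)$: one must verify that the leading coefficient is \emph{exactly} $3$ (so that the limit is the Gaussian value) and that each residual contraction norm carries a strictly positive coefficient, so that smallness of the total is equivalent to smallness of every individual term; once these identifications are made, the analytic steps are essentially mechanical.
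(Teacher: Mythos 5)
This theorem is not proved in the paper: it is stated as background with the proof deferred to the cited references (Nualart--Peccati, Nualart--Ortiz-Latorre, Nourdin--Peccati), so there is no internal proof to compare against. Your proposal reproduces the standard proof from exactly those sources --- the circular chain $(1)\Rightarrow(2)\Rightarrow(4)\Rightarrow(3)\Rightarrow(1)$ with the product-formula moment identity, the chaos expansion of $\|DI_n(f_T)\|_{\mathfrak{H}}^2$, the Stein/integration-by-parts bound, and hypercontractivity for the return trip --- and you correctly read the statement's $3\sigma^2$ as a misprint for the Gaussian fourth moment $3\sigma^4$.

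One point needs repair. Both the residual in your fourth-moment identity and the variance of $\|DI_n(f_T)\|_{\mathfrak{H}}^2$ come out as positive combinations of the \emph{symmetrized} contraction norms $\|f_T\widetilde{\otimes}_p f_T\|$, whereas condition (2) is stated for the unsymmetrized contractions $f_T\otimes_p f_T$. Since symmetrization is an average of isometries, one has $\|f_T\widetilde{\otimes}_p f_T\|\leq\|f_T\otimes_p f_T\|$, but the reverse control fails in general, so your assertion that the two families of norms ``are controlled by each other up to positive constants'' is false in the direction you need, and the direct equivalences $(1)\Leftrightarrow(2)$ and $(2)\Leftrightarrow(4)$ do not follow as written. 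The fix for $(1)\Rightarrow(2)$ is to use the version of the fourth-moment identity (as in Nualart--Peccati) in which $\mathbb{E}[I_n(f_T)^4]-3\bigl(n!\|f_T\|_{\mathfrak{H}^{\otimes n}}^2\bigr)^2$ is written as a sum carrying \emph{both} $\|f_T\otimes_p f_T\|^2$ and $\|f_T\widetilde{\otimes}_p f_T\|^2$ with strictly positive coefficients; then the vanishing of the left side forces each unsymmetrized norm to vanish. For the derivative condition, only the implication $(2)\Rightarrow(4)$ is available from the variance computation (via $\|f_T\widetilde{\otimes}_p f_T\|\leq\|f_T\otimes_p f_T\|$), and that is all the circular chain requires; the converse $(4)\Rightarrow(2)$ is obtained only by going around the loop. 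With these adjustments your argument is complete and coincides with the proof in the cited references.
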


\begin{rem}
 The multidimensional version of the above theorem is also stated and proved in \cite{no,np,pt}. 
\end{rem}
In the paper \cite{no}, Nualart and Ortiz-Lattore apply the fourth moment theorem to establish the following weak convergence result for an arbitrary sequence of centered square integrable random vectors.
\begin{thm}\label{l2.clt}
	Let $\{F_k, k \in \mathbb{N}\}$ be a sequence of $d$-dimensional centered square integrable random vectors with the following Wiener chaos expansions:
	\[
	F_k = \sum_{m=1}^{\infty} J_m F_k \,.
	\]
	Suppose that:
	\begin{itemize}
	  \item[(i)] $\lim_{M \to \infty} \limsup_{k \to \infty} \sum_{m=M+1}^{\infty} \mathbb{E}[|J_mF_k|^2] = 0$ .
	  \item[(ii)] For every $m \geq 1$, $1 \leq i, j \leq d$, $\lim_{k \to \infty} \mathbb{E}[(J_mF_k^i)(J_m F_k^j)] = C_m^{ij}$.
	  \item[(iii)] For all $v \in \mathbb{R}^d$, $\sum_{m=1}^{\infty} v^T C_m v = v^T C v$, where $C$ is a $d \times d$ symmetric nonnegative definite matrix.
	  \item[(iv)] For all $m \geq 1$, $1 \leq i, j \leq d$, $$\langle D(J_m F_k^i), D(J_m F_k^j)\rangle_{\mathfrak{H}} \xrightarrow[k \to \infty]{L^2(\Omega)} mC_m^{ij} \,.$$
	\end{itemize}
    Then, $F_k$ converges in distribution to the $d$-dimensional normal law $N_d(0,C)$ as $k$ tends to infinity.
\end{thm}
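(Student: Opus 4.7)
The plan is to truncate the chaos expansion at a finite level $M$, prove joint Gaussian convergence of the truncated vector $F_k^M := \sum_{m=1}^M J_m F_k$ via the multidimensional (Peccati--Tudor) extension of Theorem~\ref{fm.theorem}, and then remove the truncation by letting $M \to \infty$. Everything will be phrased in terms of characteristic functions so that the three sources of error (tail of the chaos expansion, convergence of $F_k^M$ within fixed $M$, and $C^M \to C$ with $C^M := \sum_{m=1}^M C_m$) are controlled independently. For the truncation, orthogonality of distinct Wiener chaoses gives $\mathbb{E}|F_k - F_k^M|^2 = \sum_{m>M} \mathbb{E}|J_m F_k|^2$, which by (i) can be made smaller than any prescribed $\varepsilon > 0$ uniformly in $k$ by taking $M$ large. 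Thus
\[
\bigl|\mathbb{E} e^{i\langle v, F_k\rangle} - \mathbb{E} e^{i\langle v, F_k^M\rangle}\bigr| \le |v|\,\bigl(\mathbb{E}|F_k - F_k^M|^2\bigr)^{1/2}
\]
is uniformly small in $k$ for $M$ large, so it suffices to prove $F_k^M \Rightarrow N_d(0, C^M)$ for each fixed $M$.

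For fixed $M$, I would form the $Md$-dimensional vector $G_k := (J_m F_k^i)_{1 \le m \le M,\ 1 \le i \le d}$, whose components lie in Wiener chaoses of fixed (but possibly different) orders. Within each chaos of order $m$, the multidimensional fourth moment theorem gives joint convergence $(J_m F_k^1, \ldots, J_m F_k^d) \Rightarrow N_d(0, C_m)$ using (ii) and (iv), since (iv) is precisely the $L^2$-convergence of all Malliavin-derivative inner products within that chaos. Across different chaoses, orthogonality of $\mathcal{H}_m$ and $\mathcal{H}_{m'}$ for $m \neq m'$ forces the corresponding entries of the covariance matrix of $G_k$ to vanish. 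By a Cram\'er--Wold / characteristic function argument — equivalently, the Peccati--Tudor theorem applied to the full $Md$-dimensional vector — these facts combine to give $G_k \Rightarrow N_{Md}\bigl(0,\,\mathrm{diag}(C_1, \ldots, C_M)\bigr)$. Summing the $J_m F_k$ blocks over $m$ then yields $F_k^M \Rightarrow N_d(0, C^M)$.

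To conclude, (iii) gives $C^M \to C$, hence $N_d(0, C^M) \Rightarrow N_d(0, C)$, and combining the three estimates on characteristic functions produces $\mathbb{E} e^{i\langle v, F_k\rangle} \to e^{-\frac{1}{2} v^T C v}$ for every $v \in \mathbb{R}^d$, so L\'evy's continuity theorem finishes the proof. I expect the main obstacle to be the joint convergence step: while the within-chaos convergence follows directly from the multidimensional fourth moment theorem through (iv), the cross-chaos joint convergence requires an extra argument. The cleanest route is to observe that chaos orthogonality automatically kills the cross-chaos covariances, and then invoke the Peccati--Tudor theorem, which upgrades marginal Gaussian convergence plus covariance convergence to joint Gaussian convergence whenever the components live in fixed Wiener chaoses. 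The remaining care point is the order of limits — $k \to \infty$ before $M \to \infty$ — with the truncation error bounded uniformly in $k$ through (i).
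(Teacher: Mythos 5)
Your argument is correct, but note first that the paper does not actually prove Theorem \ref{l2.clt}: it is quoted as a known result of Nualart and Ortiz-Latorre \cite{no}, so the comparison here is with that source rather than with anything proved in the present text. Your route --- truncate via (i), apply the Peccati--Tudor multidimensional fourth moment theorem \cite{pt} to the $Md$-dimensional vector of chaos projections, and pass to the limit in the characteristic functions --- is sound and is genuinely different from the original one. In \cite{no} the authors work directly with $\varphi_k(v)=\mathbb{E}[e^{i\langle v,F_k\rangle}]$, use the duality $\mathbb{E}[J_mF_k^j\,G]=\tfrac{1}{m}\mathbb{E}[\langle DG,D(J_mF_k^j)\rangle_{\mathfrak{H}}]$ to express $\nabla\varphi_k$ in terms of the inner products appearing in (iv), and show that the limit of $\varphi_k$ solves $\partial_{v_j}\varphi=-(Cv)_j\varphi$; that is why (iv) is stated for all pairs $i,j$. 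Your proof only ever uses (iv) on the diagonal $i=j$ (to get marginal normality of each $J_mF_k^i$ from condition 4 of Theorem \ref{fm.theorem}), since Peccati--Tudor upgrades marginal normality plus covariance convergence --- the off-diagonal information coming from (ii) together with orthogonality of distinct chaoses --- to joint normality; this is a legitimate simplification, not a gap. Two small points of care: condition (i) controls the truncation error only in the $\limsup_{k}$ sense, not uniformly in $k$ as written, but since you send $k\to\infty$ before $M\to\infty$ this costs nothing; and the first chaos $m=1$ is not covered by Theorem \ref{fm.theorem} (stated for $n\ge 2$), but its projections are exactly Gaussian, so marginal convergence there is immediate from (ii).
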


We end this section by stating the following theorem proved in the paper \cite{cnp} on the asymptotic behavior of weighted random sums.  It will be used in the next section to prove the central limit theorem of the power variation of stochastic integrals.
\begin{thm} \label{wrsum}
Let $(\Omega, \mathcal{F}, P)$ be a complete probability space. Fix a time interval $[0,T]$ and consider a double sequence of random variables $\xi = \{\xi_{i,m}, m \in \mathbb{Z}_+, 1 \leq i \leq [mT] \}$. Assume the double sequence $\xi$ satisfies the following hypotheses. \\
 (H1) Denote $g_m(t):= \sum_{i=1}^{[mt]} \xi_{i,m}$. The  finite dimensional distributions of the sequence of processes $\{g_m(t), t \in [0,T]\}$ converges $\mathcal{F}$-stably  to those of $\{B(t), t \in [0,T]\}$  as $m \to \infty$, where $\{B(t), t \in [0,T]\}$ is a standard Brownian motion independent of $\mathcal{F}$.\\
 (H2) $\xi$ satisfies the tightness condition $\mathbb{E} \left|\sum_{i=j+1}^k \xi_{i,m}\right|^4 \leq C \left(\frac{k-j}{m}\right)^2$ for any $1 \leq j < k \leq [mT]$.\\
If $\{f(t), t \in [0,T]\}$ is an $\alpha-$H\"older continuous process with $\alpha > 1/2$ and we set $X_m(t) := \sum_{i=1}^{[mt]} f(\frac{i}{m})\xi_{i,m}$, then we have the  $\mathcal{F}$-stable convergence
\[
 X_m(t) \xrightarrow[m \to \infty]{\mathcal{L}} \int_0^t f(s) dB(s), 
\]
in the Skorohod space $\mathcal{D}[0,T]$.
\end{thm}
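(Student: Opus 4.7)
The plan is to reduce the weighted random sum $X_m(t)$ to the already-controlled unweighted partial sum $g_m(t)$ via Abel summation, and then pass to the limit using the stable functional convergence of $g_m$ combined with Young integration theory.

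First, I would upgrade (H1) from finite-dimensional stable convergence to functional $\mathcal{F}$-stable convergence. The fourth-moment bound in (H2) satisfies Billingsley's moment criterion, so $\{g_m\}$ is tight in the Skorohod space $\mathcal{D}[0,T]$. Combined with (H1), this gives $g_m \to B$ $\mathcal{F}$-stably in $\mathcal{D}[0,T]$, where $B$ is a standard Brownian motion independent of $\mathcal{F}$. Moreover, (H2) together with Garsia--Rodemich--Rumsey yields, uniformly in $m$, control on the $p$-variation of $g_m$ for any $p>2$.

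Next, writing $\xi_{i,m} = g_m(i/m) - g_m((i-1)/m)$ and $N=[mt]$, summation by parts gives
\[
X_m(t) = f(N/m)\,g_m(N/m) - \sum_{i=1}^{N-1} \left[ f\!\left(\tfrac{i+1}{m}\right) - f\!\left(\tfrac{i}{m}\right)\right] g_m(i/m).
\]
The boundary term converges $\mathcal{F}$-stably to $f(t)B(t)$ by continuity of $f$ and the stable convergence of $g_m$. The remaining sum $S_m(t)$ is a Riemann--Stieltjes approximation to the Young integral $\int_0^t g_m(s)\,df(s)$. Since $f$ is $\alpha$-H\"older with $\alpha>1/2$ and the sample paths of $B$ have finite $p$-variation for every $p>2$, one can pick $p<1/(1-\alpha)$ so that $1/p + \alpha > 1$, which guarantees that the pathwise Young integral $\int_0^t B(s)\,df(s)$ exists. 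A Skorohod representation for the joint stable convergence of $g_m$, combined with continuity of the Young integration map in the $p$-variation topology and the uniform $p$-variation bound on $g_m$, yields $S_m(t) \to \int_0^t B(s)\,df(s)$ $\mathcal{F}$-stably. Young's integration by parts then identifies the limit as
\[
f(t)B(t) - \int_0^t B(s)\,df(s) = \int_0^t f(s)\,dB(s),
\]
giving the stable convergence of $X_m(t)$. Tightness of $X_m(\cdot)$ in $\mathcal{D}[0,T]$ follows from (H2) and the boundedness of $f$ on $[0,T]$, upgrading finite-dimensional to functional convergence.

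The main obstacle is the stable convergence of $S_m(t)$ to the Young integral $\int_0^t B(s)\,df(s)$. Its delicacy comes from $f$ being $\mathcal{F}$-measurable, so the argument must preserve $\mathcal{F}$-stability rather than treating $f$ as deterministic. The clean route is to establish joint tightness of $(f,g_m)$ in a sufficiently strong topology (a H\"older topology for $f$, a $p$-variation topology for $g_m$) and then exploit continuity of the Young integral in that joint topology; the uniform control of the $p$-variation of $g_m$, which rests on (H2), is the crucial technical input and the part most sensitive to the assumptions.
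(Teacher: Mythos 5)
The first thing to note is that the paper does not prove Theorem \ref{wrsum} at all: it is quoted verbatim from the reference \cite{cnp} (``the following theorem proved in the paper \cite{cnp}''), so there is no in-paper proof to compare against. Judged on its own merits, your strategy --- global summation by parts, reduction to the unweighted sums $g_m$, and passage to the limit via Young integration with $1/p+\alpha>1$ for some $p>2$ (which is exactly where $\alpha>1/2$ enters) --- is a viable route and is close in spirit to the argument in \cite{cnp}, which instead approximates $f$ by a step function on a coarse grid of mesh $1/n$, uses (H1) to pass to the limit in $m$ for fixed $n$, and controls the blocking error by a variation estimate of order $n^{1/2-\alpha}$. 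Both arguments ultimately rest on the same two inputs: stable convergence of $g_m$ plus a Young--Lo\`eve type bound pairing the $1/\alpha$-variation of $f$ against the $p$-variation of the partial-sum process.

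There are, however, two places where your sketch asserts more than it delivers. First, the uniform $p$-variation control of $g_m$ is the load-bearing step, and Garsia--Rodemich--Rumsey does not apply directly: $g_m$ is a c\`adl\`ag step function, not a continuous path, and (H2) controls increments only down to scale $1/m$. What you actually need is a discrete maximal/variation inequality (M\'oricz--Serfling--Stout, or a L\'epingle-type dyadic argument for partial-sum processes) showing $\mathbb{E}\bigl[{\rm var}_p(g_m;[s,t])^4\bigr]\le C(t-s)^2$ for $p>2$, uniformly in $m$; this is true under (H2) but is a nontrivial lemma, not a citation of GRR. Second, your claim that tightness of $X_m$ in $\mathcal{D}[0,T]$ ``follows from (H2) and the boundedness of $f$'' does not hold as stated: a naive fourth-moment bound on $X_m(t)-X_m(s)=\sum f(i/m)\xi_{i,m}$ via Abel summation produces the total variation of $f$ on the fine grid, which is of order $m^{1-\alpha}$ and diverges. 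Tightness must instead be extracted from the same decomposition $X_m=f\cdot g_m-\int g_m\,df$ together with the Young--Lo\`eve estimate $\bigl|\int_s^t g_m\,df\bigr|\le C\,{\rm var}_p(g_m;[s,t])\,\|f\|_\alpha(t-s)^\alpha$ and the uniform $p$-variation bound above. With those two lemmas supplied (plus the routine localization of the random H\"older constant $\|f\|_\alpha$ and the standard device of adjoining an arbitrary bounded $\mathcal{F}$-measurable variable to the tuple before invoking Skorohod representation, so that $\mathcal{F}$-stability is preserved), your proof closes.
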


\section{Asymptotic behavior of power variation}\label{s.power-variation}

In this section, we introduce high order power variations and prove
some asymptotic results for the high order power variations of stochastic integrals
with respect to fBm. The high order power variations  will be used to construct estimators for
the volatility and the integrated volatility of fractional
Ornstein-Uhlenbeck processes in the next section.

Consider a sequence of random variables $\{X_{i-1} \,, i \geq 1\}$.  Denote the first order difference 
 $\Delta X_{i-1}=\Delta_1 X_{i-1}=X_{i}-X_{i-1}$.  Define the $k$-th order difference by induction as follows
  $\Delta_k X_{i-1} = \Delta_{k-1} X_{i}-\Delta_{k-1} X_{i-1}$ for  $k=2, 3, \dots$,  namely,
\[
\Delta_k X_{i-1} =  \sum_{j=0}^k
 (-1)^{k-j}\comb{k}{j} X_{i+j-1}\,.
\]
Let $B^H = \{B^H_t, t\geq 0\}$ be a  fBm with Hurst parameter $H \in (0,1)$. For any $j\ge 0$, we can write down the covariance  function  of the $k$-th  order difference of the sequences
$\{B^H_n, n\ge 0\}$ and  $\{B^H_{n+j}, n\ge 0\}$ as   follows
\[
 \rho_{k,H}(j):=\mathbb{E}[(\Delta_kB^H_{n+j})(\Delta_kB^H_{n})]=\frac{1}{2} \sum_{i=-k}^{k}(-1)^{1-i}\dbinom{2k}{k-i}|j-i|^{2H}\,.
 \label{rkj}
\]
Since all the moments of a mean zero Gaussian can be expressed by its variance, we see that the $p$-th moment   of 
  $\Delta_kB^H_{n}$    is given by
\begin{equation}\label{ckp}
  c_{k,p} = \mathbb{E}[|\Delta_k B_n^H|^p] = \displaystyle\frac{2^{p/2} \Gamma((p+1)/2)}{\Gamma(1/2)}[\rho_{k,H}(0)]^{p/2}.
\end{equation}
Notice that the quantities $\rho_{k,H}(j)$ and $c_{k,p}$ are independent of $n$, due to the fact that the  fBm has stationary increments.

From the fact that  $\rho_{k,H}(j)= o(j^{2H-2k}) $ for $j$ large it follows that
\[
  \sum_{j=0}^{\infty}\rho_{k,H}^2(j)
\begin{cases}
= \infty&\qquad \hbox{when $k=1$ and  $\frac{3}{4} \leq H < 1$},\\
< \infty &\qquad \hbox{when $k=1$ and   when $0 < H < \frac{3}{4}$},\\
< \infty &\qquad \hbox{when $k\geq 2$}\,.
 \end{cases}
\]

Let  $p>0$ and let  $n \geq 1$ be an integer.   We define the
$k$-th order $p$-variation  of a stochastic process $Z=\{Z_t,t\geq 0\}$  as
\begin{equation}\label{vkp}
	V^n_{k,p}(Z)_t = \sum^{[nt]-k+1}_{i=1} |\Delta_k Z_{\frac{i-1}{n}}|^p
	=\sum^{[nt]-k+1}_{i=1}
 \left |    \sum_{j=0}^k
 (-1)^{k-j}\comb{k}{j} Z_{\frac{i+j-1}{n}} \right |^p\,,
\end{equation}
where we use the convention  that the
sum is   zero if $[nt]-k+1<1$.

The following proposition shows the convergence of the $k$-th order $p$-variation
for stochastic integrals of fractional Brownian motion, extending a result in   \cite{cnw}  which is valid when  $k=1$.

\begin{theorem} \label{pro1}
Let $k \geq 2$  and let $H\in(0,1)$.  Suppose that $\{u_t, t \in [0,T]\}$ is a stochastic process whose sample paths are  H\"older continuous with exponent $1/q$ for a certain
  $q < \frac 1 {1-H}$.  Consider the pathwise Riemann-Stieltjes integral
\[
Z_t=\int^t_0 u_s dB^H_s\,, \quad t\in [0,T].
\]
Then for any $p> 0$,  as $n\rightarrow \infty$,
\begin{equation}
n^{-1+pH}V^n_{k,p}(Z)_t \rightarrow  c_{k,p} \int^t_0 |u_s|^pds \label{e.limit-v_n}
\end{equation}
 in probability, uniformly on $[0,T]$,  where $c_{k,p}$ is the constant introduced in (\ref{ckp}).
\end{theorem}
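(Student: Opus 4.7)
The plan is to reduce the general statement to the special case $Z=B^H$, $u\equiv 1$, via a Young-integral decomposition, and then handle that pure-fBm case by a second-moment argument that exploits the rapid covariance decay $\rho_{k,H}(j)=O(j^{2H-2k})$ available for $k\ge 2$. Set $\xi_i=\int_{(i-1)/n}^{i/n}u_s\,dB^H_s$, so that $\Delta_k Z_{(i-1)/n}$ equals the $(k-1)$th order difference of the sequence $\{\xi_i\}$. Using the identity $\sum_{j=0}^{k-1}(-1)^{k-1-j}\binom{k-1}{j}(B^H_{(i+j)/n}-B^H_{(i+j-1)/n})=\Delta_k B^H_{(i-1)/n}$, substitution of $u_{(i-1)/n}$ for $u_s$ inside each $\xi_{i+j}$ yields the decomposition
$$\Delta_k Z_{(i-1)/n} = u_{(i-1)/n}\,\Delta_k B^H_{(i-1)/n} + R_{i,n},$$
where $R_{i,n}$ is a linear combination of Young integrals of $u_s-u_{(i-1)/n}$ against $dB^H$ on the subintervals $[(i+j-1)/n,(i+j)/n]$, $0\le j\le k-1$. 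Choose $\alpha<H$ close enough to $H$ that $\alpha+1/q>1$ (possible because $q<1/(1-H)$). Splitting $u_s-u_{(i-1)/n}=(u_s-u_{(i+j-1)/n})+(u_{(i+j-1)/n}-u_{(i-1)/n})$ and applying Young's inequality on each subinterval gives $|R_{i,n}|\le C\,\eta_T\,\|u\|_{1/q;[0,T]}\,n^{-(\alpha+1/q)}$ almost surely.

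Next, using $|\Delta_k B^H_{(i-1)/n}|\le C\eta_T n^{-\alpha}$ together with the elementary bound $\bigl||a+b|^p-|a|^p\bigr|\le C_p(|a|^{p-1}|b|+|b|^p)$ for $p\ge 1$ (or the simpler $|b|^p$ estimate when $p<1$), summing over the $O(n)$ values of $i$ and multiplying by $n^{-1+pH}$ shows that
$$n^{-1+pH}\sum_{i=1}^{[nt]-k+1}\bigl||\Delta_k Z_{(i-1)/n}|^p-|u_{(i-1)/n}\Delta_k B^H_{(i-1)/n}|^p\bigr|=O\bigl(n^{p(H-\alpha)-1/q}+n^{p(H-\alpha-1/q)}\bigr),$$
which tends to zero almost surely once $\alpha$ is taken sufficiently close to $H$. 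It remains to show that
$$M_n(t):=n^{-1+pH}\sum_{i=1}^{[nt]-k+1}|u_{(i-1)/n}|^p|\Delta_k B^H_{(i-1)/n}|^p\xrightarrow{\,P\,}c_{k,p}\int_0^t|u_s|^p\,ds.$$
First take $u\equiv 1$. By self-similarity and stationarity of increments, $\{n^H\Delta_k B^H_{(i-1)/n}\}_i$ has the same joint law as the stationary Gaussian sequence $\{Y_i\}_i=\{\Delta_k B^H_i\}_i$, so $\mathbb{E}[M_n(t)]\to c_{k,p}t$. Since $|y|^p-c_{k,p}$ has Hermite rank at least $2$ under $Y_0$'s law, $\mathrm{Cov}(|Y_0|^p,|Y_j|^p)=O(\rho_{k,H}(j)^2)=O(j^{4H-4k})$, which is summable for $k\ge 2$; a standard variance bound then yields $\mathrm{Var}(M_n(t))=O(n^{-1})$, hence $L^2$ convergence. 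For general continuous $u$, approximate $|u|^p$ uniformly on $[0,T]$ by a piecewise constant function and apply the constant case on each piece.

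Finally, since $t\mapsto V^n_{k,p}(Z)_t$ is nondecreasing in $t$ and the limit $c_{k,p}\int_0^t|u_s|^p\,ds$ is continuous and nondecreasing, pointwise convergence in probability on a countable dense subset of $[0,T]$ upgrades to uniform convergence on $[0,T]$ in probability by a standard Dini/P\'olya-type argument. The main obstacle I anticipate is the remainder estimate: the Young bound on $R_{i,n}$ must beat the prefactor $n^{-1+pH}$, which is delicate when $H$ is close to $1$ and $p$ is large, and crucially relies on the freedom to take $\alpha$ arbitrarily close to $H$ while keeping $\alpha+1/q>1$. By contrast, the second-moment argument for the main term becomes routine once the summability of $\rho_{k,H}(j)^2$ is invoked---this summability is exactly what fails for $k=1$, $H\ge 3/4$, and is the reason that passing to higher-order differences lifts the Hurst restriction present in \cite{cnw}.
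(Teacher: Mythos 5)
Your proof is correct, but it takes a genuinely different route from the paper's. The paper works with a two--scale decomposition (an auxiliary coarse mesh $n$ inside the fine mesh $m$), handles the frozen-coefficient Gaussian sums by the ergodic theorem applied to the stationary sequence $\{\Delta_k B^H_{j-1}\}$, and controls the substitution error $A^{(m)}$ by induction on $k$: the $k$-th order error is two copies of the $(k-1)$-th order error plus a term $\Delta_{k-1}B^H_{i/m}\,(u_{(i+1)/m}-u_{i/m})$, with the base case $k=1$ imported from \cite{cnw}. You instead (a) bound the substitution error in one shot by the Love--Young inequality on each subinterval, which avoids both the induction and the appeal to \cite{cnw}, and (b) prove the law of large numbers for $n^{-1+pH}\sum_i|\Delta_k B^H_{(i-1)/n}|^p$ by a second-moment bound resting on the Hermite-rank-$2$ covariance estimate $O(\rho_{k,H}(j)^2)$ and its summability for $k\ge 2$, followed by a piecewise-constant approximation of $|u|^p$ in place of the paper's intermediate term $B^{(n,m)}$. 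Your route is more self-contained and even yields a rate ($\mathrm{Var}=O(n^{-1})$) for the frozen term; the price is that the variance argument genuinely requires $\sum_j\rho_{k,H}(j)^2<\infty$, whereas the ergodic theorem used in the paper gives the same first-order convergence for every $k\ge1$ and every $H\in(0,1)$. For this reason your closing remark slightly overstates matters: the summability of $\rho_{k,H}(j)^2$ is what the restriction $H\le\frac34$ in \cite{cnw} concerns at the level of the \emph{central limit theorem}, not of the law of large numbers, which for $k=1$ already holds for all $H$. Your exponent bookkeeping for the remainder (the terms $n^{p(H-\alpha)-1/q}$ and $n^{p(H-\alpha-1/q)}$, with $\alpha<H$ chosen so that $\alpha+1/q>1$ and $p(H-\alpha)<1/q$) and the monotonicity argument upgrading pointwise to uniform convergence are both sound.
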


\begin {proof}
Denote by $\|\cdot \|_\infty$  the supremum norm on $[0, T]$. For any $t \in [0,T]$ and  any $m\geq n \geq 1$, by the definition of $V^m_{k,p}(Z)_t$, we have   
\begin {eqnarray*}
& & m^{-1+pH}V^m_{k,p}(Z)_t - c_{k,p} \int^t_0|u_s|^p ds \\
& = & m^{-1+pH} \sum^{[mt]-k+1}_{i=1} \left ( \left | \Delta_kZ_{\frac{i-1}{m}} \right |^p - \left |u_{\frac{i}{m}}\Delta_k B^H_{\frac{i-1}{m}} \right |^p \right ) \\
& & + \; m^{-1+pH} \left( \sum^{[mt]-k+1}_{i=1} \left | u_{\frac{i}{m}}\Delta_k B^H_{\frac{i-1}{m}} \right |^p - \sum^{[nt]-k+1}_{i=1} \left | u_\frac{i-1}{n} \right |^p\sum_{j\in I_n(i)} \left |\Delta_k B^H_{\frac{j-1}{m}} \right |^p \right) \\
& & + \; m^{-1+pH} \sum^{[nt]-k+1}_{i=1} \left|u_\frac{i-1}{n}\right|^p\sum_{j\in I_n(i)} \left|\Delta_k B^H_{\frac{j-1}{m}}\right|^p - c_{k,p} n^{-1} \sum_{i=1}^{[nt]-k+1} \left|u_\frac{i-1}{n}\right|^p \\
&& + \; c_{k,p} \Big(\frac{1}{n}\sum_{i=1}^{[nt]-k+1} \left|u_\frac{i-1}{n}\right|^p - \int^t_0 |u_s|^p ds \Big) \\
& =: & A_t^{(m)} + B_t^{(n,m)} + C_t^{(n,m)} + D_t^{(n)},
\end {eqnarray*}
where $I_n(i) = \{j: \frac{j-1}{m} \in (\frac{i-1}{n}, \frac{i}{n}] \}$, \; $1 \leq i \leq [nt]-k+1$. \\

Because of the stationary property of the increments of $B^H$, the high order difference sequence $\{\Delta_k B^H_{j-1} \,, j\geq 1\}$ is stationary as well. Thus, for any fixed $n \in \mathbb{N}$ and $1 \leq i \leq [nt]-k+1$, we apply the self-similarity property of $B^H$ to scale the high order difference sequence $\{\Delta_k B^H_{(j-1)/m} \,, j \in I_n(i) \}$, and then apply the ergodic theorem to obtain
\begin{equation}\label{uc.erg}
 m^{-1+pH} n \sum_{j \in I_n(i)} \left|\Delta_k B^H_{\frac{j-1}{m}}\right|^p - c_{k,p} \to 0 \,,
\end{equation}
in probability as $m \to \infty$. This implies
\begin{equation}  \label{eq1a}
\lim_{m \to \infty}  \|C^{(n,m)}\|_\infty = 0
\end{equation}
in probability, for any fixed $n \geq 1$. 

For the term $B_t^{(n,m)}$, we apply  arguments similar to those used in the proof of Theorem 1 in \cite{cnw} together with the ergodic theorem  \eqref{uc.erg} for the $k$-th order difference to establish
\begin{equation}  \label{eq2}
\lim_{n \to \infty}\lim_{m \to \infty}\|B^{(n,m)}\|_\infty=0 \, , 
\end{equation}
where the convergence holds in probability.

The term $D^{(n)}_t$ is the remainder of a Riemann sum approximation. For all $p>0$, using the H\"older continuity of $u$, we have 
\begin{equation}  \label{eq3}
  \lim_{n \to \infty} \|D^{(n)}\|_{\infty}=0 \, , 
\end{equation}
almost surely.

It remains to deal with the term   $A^{(m)}$.
We will use the following two elementary inequalities
\begin{eqnarray}
 |x+y+z|^p &\leq& 3^{(p-1)^+}[|x|^p + |y|^p+ |z|^p] \label{iam1}, \\
 ||x|^p - |y|^p| &\leq& (p \vee 1) 2^{(p-2)^+} [|x-y|^p + |y|^{(p-1)^+}|x-y|^{(p \wedge 1)}] \label{iam2}
\end{eqnarray}
for any $p \geq 0$, and any $x, y,z  \in \mathbb{R}$. 
Using inequality (\ref{iam2}), we obtain
\begin {eqnarray}
|A^{(m)}_t|
&\leq&   m^{-1+pH} \sum_{i=1}^{[mt]+1-k} \Big||\Delta_{k}Z_{\frac{i-1}{m}}|^p-|u_{\frac{i}{m}}\Delta_k B^H_{\frac{i-1}{m}}|^p\Big| \nonumber\\
& \leq &  (p\vee1) 2^{(p-2)_{+}} m^{-1+pH}\Bigg\{  \sum_{i=1}^{[mt]+1-k} \Big[\Big|\Delta_{k}Z_{\frac{i-1}{m}} -u_{\frac{i}{m}}\Delta_{k}B^H_{\frac{i-1}{m}}\Big|^p\nonumber \\
&&\qquad + |u_{\frac{i}{m}}\Delta_k B^H_{\frac{i-1}{m}}|^{(p-1)^+}
 \Big|\Delta_{k}Z_{\frac{i-1}{m}}-u_{\frac{i}{m}}\Delta_{k}B^H_{\frac{i-1}{m}}\Big|^{p\wedge1}\Big]\bigg\} \nonumber\\
& =: & (p\vee1)2^{(p-2)_{+}} [E^{(m)}_{k,p} (t)+ F^{(m)}(t)]\,.
\label{e.a^m}
\end {eqnarray}
First, we   use mathematical induction on $k$ to prove $\lim_{m \to \infty}\|E_{k,p}^{(m)} \|_\infty= 0$,  almost surely. 
For $k=1$, the result is true by the proof of Theorem 1 in \cite{cnw}.  Assume the convergence holds true for $k-1$.   We can express $E^{(m)}_{k,p}(t) $  in the following way
\[
E^{(m)}_{k,p}(t)=  m^{-1+pH} \sum_{i=1}^{[mt]+1-k}   \left|  \Phi_{i,1}^{(m)} - \Phi_{i,2}^{(m)} + \Phi_{i,3}^{(m)}\right|^p,
\]
where
\begin{eqnarray*}
 \Phi_{i,1}^{(m)}  &=&  \Delta_{k-1}Z_{\frac{i}{m}} - u_{\frac{i+1}{m}}\Delta_{k-1}B^H_{\frac{i}{m}}, \\
  \Phi_{i,2}^{(m)}  &=&  \Delta_{k-1}Z_{\frac{i-1}{m}}- u_{\frac{i}{m}}\Delta_{k-1}B^H_{\frac{i-1}{m}} ,
  \end{eqnarray*}
  and
  \[
  \Phi_{i,3}^{(m)} =  \Delta_{k-1}B^H_{\frac{i}{m}}(u_{\frac{i+1}{m}}-u_{\frac{i}{m}}).
\]
Then, applying  inequality (\ref{iam1}) yields
\begin{eqnarray*}
E^{(m)}_{k,p} (t)& \leq & 3^{(p-1)^+}m^{-1+pH} \sum_{i=1}^{[mt]+1-k}
\left(  | \Phi_{i,1}^{(m)}|^p +| \Phi_{i,2}^{(m)}|^p + | \Phi_{i,3}^{(m)}|^p \right) \\
& \leq & 3^{(p-1)^+}  \left(2E^{(m)}_{k-1,p} (t)+ m^{-1+pH}   \sum_{i=1}^{[mt]+1-k}|  \Phi_{i,3}^{(m)} |^p \right).
\end{eqnarray*}
Choosing $0 < \epsilon < \frac{1}{q} + H -1$, we can write
\[
 m^{-1+pH}   \sum_{i=1}^{[mt]+1-k}|  \Phi_{i,3}^{(m)} |^p \le  C m^{pH- \frac pq -p(H-\epsilon)}  \|u\|^p_{{1/q}} \|B^H\|^p_{H-\epsilon},
\]
for some constant $C$ depending on $T$, $p$, $\epsilon$, $k$ and $H$. Using the induction hypothesis, and taking into account that $-\frac 1q + \epsilon <H-1<0$, we conclude that  $\|E^{(m)}_{k,p}\|_\infty$ converges to zero almost surely, as $m$ tends to infinity.

Finally, the infinity norm of the term $F^{(m)}$ can be bounded by
\[
\|F^{(m)}\|_{\infty} \leq C \|u \|_{\infty}^{(p-1)^+}  \|B^H\|^{(p-1)^+}_{H-\epsilon} m^{-(p-1)^+(H-\epsilon)} \|E^{(m)}_{k,p\wedge1}\|_{\infty}\,,
\]
where again  $C$  is a constant depending on $T$, $p$, $\epsilon$, $k$ and $H$. Then, $\|F^{(m)}\|_{\infty}$  goes to 0 almost surely, as $m \to \infty$. 

Thus, by \eqref{e.a^m}  we have
 $\|A^{(m)}\|_\infty \to 0$  almost surely, as $m \to \infty$ . The proposition follows then from this convergence and the limits established in  (\ref{eq1a}), (\ref{eq2}) and (\ref{eq3}).
\end {proof}

Next we study the rate of the convergence of \eqref{e.limit-v_n}. We will use the notation
\begin{equation}
v_1^2=\displaystyle\sum^{\infty}_{m=2} \frac{c_m^2}{m!} \Big[1+2\displaystyle\sum_{j=1}^{\infty} \Big(\frac{\rho_{k,H}(j)}{\rho_{k,H}(0)}\Big)^m\Big]\,,\label{e.v1}
\end{equation}
where $c_m = m! (\rho_{k,H}(0))^{\frac{p}{2}} \mathbb{E}[H_m(N)|N|^p]$  and $N$ is a standard Gaussian random variable. We shall first deal with  the
case of the fractional Brownian motion ($Z_t=B^H_t$) and then 
 we shall deal with the general case of stochastic integral.

\begin{pro} \label{pro2}
Fix a positive integer $k \geq 2$. Let  $H \in (0,1)$, $T>0$ and $p>0$. Then
\begin{equation} 
\left(B^H_t, \sqrt{n}\left(n^{-1+pH}V^n_{k,p}(B^H)_t - c_{k,p}  t \right)  \right) \rightarrow
(B^H_t, v_1W_t)\label{e.central-limit-v_n-b}
\end{equation}
in law in the space $\mathcal{D}([0,T])^2 $ equipped with the Skorohod topology, where $v_1$ is defined by \eqref{e.v1} and
$W=\{W_t, t \in [0,T]\}$ is a Brownian motion, independent of the fractional Brownian motion $B^H$.
\end{pro}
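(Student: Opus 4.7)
The plan is to combine the Wiener chaos expansion of $|x|^p$ with the multidimensional fourth moment theorem (Theorem~\ref{l2.clt}). Set $F_n(t) := \sqrt{n}(n^{-1+pH} V^n_{k,p}(B^H)_t - c_{k,p}t)$. By the self-similarity of $B^H$, the increment $\Delta_k B^H_{(i-1)/n}$ has variance $n^{-2H}\rho_{k,H}(0)$, so $\xi_i^n := n^H \rho_{k,H}(0)^{-1/2}\Delta_k B^H_{(i-1)/n}$ is a stationary standard Gaussian sequence with correlation $\rho_{k,H}(|i-j|)/\rho_{k,H}(0)$. Choose $\eta_i^n \in \mathfrak{H}$ with unit norm and $B^H(\eta_i^n)=\xi_i^n$. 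Since the first Hermite coefficient of $|x|^p$ vanishes by symmetry,
\[
\rho_{k,H}(0)^{p/2} |\xi_i^n|^p - c_{k,p} = \sum_{m\geq 2} \frac{c_m}{m!}\, I_m\bigl((\eta_i^n)^{\otimes m}\bigr),
\]
and after absorbing a boundary remainder of order $n^{-1/2}$ we obtain $F_n(t) = \sum_{m\geq 2} I_n^m(t)$ with
\[
I_n^m(t) = I_m(f_n^m(t)), \qquad f_n^m(t) = \frac{c_m}{m!\sqrt{n}}\sum_{i=1}^{[nt]-k+1}(\eta_i^n)^{\otimes m}.
\]

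For fixed $m \geq 2$ and $s \leq t$, the multiple-integral isometry and $\langle \eta_i^n, \eta_j^n\rangle_\mathfrak{H} = \rho_{k,H}(|i-j|)/\rho_{k,H}(0)$ give
\[
\mathbb{E}[I_n^m(s)\, I_n^m(t)] = \frac{c_m^2}{m!\,n}\sum_{i=1}^{[ns]-k+1}\sum_{j=1}^{[nt]-k+1}\left(\frac{\rho_{k,H}(i-j)}{\rho_{k,H}(0)}\right)^m.
\]
Since $k\geq 2$ forces $\rho_{k,H}(j)=O(|j|^{2H-2k})$ with $2H-2k<-1$, the series $\sum_j (\rho_{k,H}(j)/\rho_{k,H}(0))^m$ is absolutely convergent for every $m \geq 2$, and a Toeplitz-type estimate identifies the limit as $v_{1,m}^2(s\wedge t)$, with $v_{1,m}^2 = \frac{c_m^2}{m!}\bigl[1+2\sum_{j\geq 1}(\rho_{k,H}(j)/\rho_{k,H}(0))^m\bigr]$. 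To verify the fourth moment criterion (condition (iv) of Theorem~\ref{fm.theorem}), I would bound the contraction norms $\|f_n^m(t)\otimes_p f_n^m(t)\|^2_{\mathfrak{H}^{\otimes 2(m-p)}}$ for $p=1,\ldots,m-1$: each reduces to an $n^{-2}$-scaled quadruple sum of products of four factors of $\rho_{k,H}(\cdot)/\rho_{k,H}(0)$, which is $O(n^{-1})$ by the same $\ell^m$ summability.

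For the joint convergence with $B^H$, I would apply Theorem~\ref{l2.clt} to $(B^H_{t_1},\ldots, B^H_{t_d},\, F_n(t_1),\ldots, F_n(t_d))$. Because $B^H\in\mathcal{H}_1$ while $I_n^m(t)\in\mathcal{H}_m$ for $m\geq 2$, condition (ii) yields vanishing cross-covariances, so the Gaussian limits of the two blocks are jointly Gaussian and independent. Condition (i) follows from the uniform bound $\mathbb{E}[I_n^m(t)^2]\leq t\,\frac{c_m^2}{m!}\bigl[1+2\sum_{j}|\rho_{k,H}(j)/\rho_{k,H}(0)|^2\bigr]$ (using $|r(j)|^m\leq |r(j)|^2$ for $m\geq 2$) combined with $\sum_{m\geq 2} c_m^2/m! = \mathrm{Var}(\rho_{k,H}(0)^{p/2}|N|^p)<\infty$. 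Summing the chaos contributions yields $F_n \to v_1 W$ in finite-dimensional distributions jointly with $B^H$, where $v_1^2=\sum_m v_{1,m}^2$ and $W\perp B^H$. To upgrade to $\mathcal{D}([0,T])^2$, tightness of $F_n$ follows from the Kolmogorov-type estimate $\mathbb{E}|F_n(t)-F_n(s)|^4 \leq C(t-s)^2$ for $t-s\geq 1/n$, which I would obtain by a direct fourth-moment computation on the stationary sum $\sum_i Y_i$ with $Y_i = \rho_{k,H}(0)^{p/2}|\xi_i^n|^p - c_{k,p}$, using Wick's formula and the summability of $\rho_{k,H}(j)^m$ for all $m\geq 2$. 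The main technical obstacle is the uniform control over the chaos index $m$ of the contraction estimates and the tail variance: the hypothesis $k\geq 2$ is essential precisely because it secures the $\ell^m$-summability of $\rho_{k,H}$ that underpins the asymptotic variance, the contraction smallness, and the tail bound all at once, and it is this summability that fails for $k=1$, $H\geq 3/4$.
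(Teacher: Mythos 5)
Your proposal is correct and follows essentially the same route as the paper: reduction by self-similarity to a stationary Gaussian sequence, the Hermite-rank-$2$ chaos expansion of $|x|^p-c_{k,p}$, the multidimensional fourth-moment theorem (Theorem \ref{l2.clt}) with vanishing cross-covariances against the first chaos to obtain joint finite-dimensional convergence with an independent Brownian limit, and a fourth-moment bound for tightness; the paper checks condition (iv) via the ergodic theorem where you use the equivalent contraction criterion, which is an immaterial difference. The only detail left to complete is tightness for increments with $t-s<1/n$, where the random part of $F_n(t)-F_n(s)$ vanishes and the increment reduces to the deterministic drift $\sqrt{n}\,c_{k,p}(s-t)$; the paper handles this case separately through the product-moment criterion of Billingsley's Theorem 13.5 rather than the plain bound $\mathbb{E}|F_n(t)-F_n(s)|^4\le C(t-s)^2$ restricted to $t-s\ge 1/n$.
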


\begin{proof}
 The proof will be completed  in two steps.

\noindent {Step 1:} We show the convergence of the finite-dimensional distributions. Let the intervals $(a_l, b_l], l=1,\dots,\nu 
$, be pairwise disjoint in $[0,T]$.
   Define the random vectors $B=(B^H_{b_1}-B^H_{a_1}, \dots, B^H_{b_\nu }-B^H_{a_\nu })$ and $X^{(n)} = (X^{(n)}_1, \dots ,X^{(n)}_\nu )$, where 
   \[
    X^{(n)}_l = n^{-\frac{1}{2}+pH} \sum_{j \in \mathcal{I}_{nl}} \Big|\Delta_kB^H_{\frac{j-1}{n}}\Big|^p - \sqrt{n} c_{k,p} |b_l-a_l|,
   \]
and $\mathcal{I}_{nl}=([na_l]-k+1, [nb_l] - k + 1]$, for $l=1, \dots, \nu $. We claim that 
\begin{equation} \label{bx.fnt}
(B,X^{(n)}) \xrightarrow[n \to \infty]{\mathcal{L}} (B,V) \,,
\end{equation}
where $B,V$ are independent and $V$ is a centered Gaussian vector, whose components  are independent and have variances $v_1^2 |b_l-a_l| $.  Here $v_1^2$ is defined in \eqref{e.v1}.
	
Set $\xi_j = B^H_j - B^H_{j-1}$ and $h(x)=|x|^p - c_{k,p}$. Then $\{\xi_j, j\geq 1\}$ is a stationary Gaussian sequence. Introduce the random vectors
$B^{(n)} = (B^{(n)}_1, \dots ,B^{(n)}_\nu )$ and $Y^{(n)} = (Y^{(n)}_1, \dots ,Y^{(n)}_\nu )$, where
\[
B^{(n)}_l = n^{-H} \sum_{[na_l]<j\leq[nb_l]}\xi_j \,,
\]
\[
Y^{(n)}_l = \frac{1}{\sqrt{n}} \sum_{j \in \mathcal{I}_{nl}} h(\Delta_{k-1}\xi_j) , \, \quad 1 \leq l \leq \nu  \,.
\]
By the self-similarity property of fBm,  the convergence of \eqref{bx.fnt} will follow from the convergence 
\begin{equation}
(B^{(n)},Y^{(n)}) \xrightarrow[n \to \infty]{\mathcal{L}} (B,V) \,.
\label{e.by-to-bv}
\end{equation}
We are going to prove \eqref{e.by-to-bv}  by Theorem \ref{l2.clt}.  Consider the normalized sequence 
\begin{equation}\label{n.seq}
N_j = \frac{\Delta_{k-1}\xi_j}{\sqrt{\rho_{k,H}(0)}} \,,  \quad j\ge 1.
\end{equation}
Since the function $h(x)$ has Hermite rank $2$, the term $Y^{(n)}_l $ can be decomposed as
\[
Y^{(n)}_l = \sum_{m \geq 2} J_m Y^{(n)}_l := \sum_{m \geq 2} \frac{c_m}{\sqrt{n}}\sum_{j \in \mathcal{I}_{nl}} H_m(N_j) \,,
\] 
where $J_m Y^{(n)}_l$ is the projection of  $Y^{(n)}_l$ on the $m$-th Wiener chaos, and
\[
c_m=m!\mathbb{E}[H_m(N)h(\sqrt{\rho_{k,H}(0)}N )]=m!(\rho_{k,H}(0))^{\frac{p}{2}}\mathbb{E}[H_m(N)|N|^p]\,,
\]
with  $N$ being a standard Gaussian random variable. We have the following five statements.

\begin{description}	
\item[(i)] \ $\lim_{n \to \infty} \mathbb{E}[B^{(n)}_h B^{(n)}_l] = \mathbb{E}[(B^H_{b_h}-B^H_{a_h})(B^H_{b_l}-B^H_{a_l})]$\  for  all $1\le h,l \le \nu $. 
\item[(ii)] \ $\mathbb{E} (B^{(n)}_h J_mY_l^{(n)})=0$,  for  all $1\le h,l \le \nu $. This is clear because $B^{(n)}_h \in \mathcal{H}_1$ and $J_mY_l^{(n)} \in \mathcal{H}_m$ with $m \geq 2$.   	
\item[(iii)] \ For all $1 \leq l \leq \nu $, we have
\begin{eqnarray*}
&& \limsup_{n \to \infty} \sum_{m=M+1}^{\infty} \mathbb{E}[|J_m Y_l^{(n)}|^2] = \limsup_{n \to \infty} \sum_{m=M+1}^{\infty} \frac{c_m^2 }{n} \sum_{i, j \in \mathcal{I}_{nl}} \mathbb{E} [H_m(N_i) H_m(N_j)]  \\
&& \le  \limsup_{n \to \infty} \frac{[nb_l] - [na_l]}{n} \sum_{m=M+1}^{\infty} \frac{c_m^2 }{m!} \left[1+2\sum_{i=1}^{[nb_l] - [na_l]}\left|\frac{\rho_{k,H}(i)}{\rho_{k,H}(0)}\right|^m \right] , 
\end{eqnarray*}
which equals the constant $b_l - a_l$ multiplying the tail of $v_1^2$, and it converges to 0 as $M \to \infty$.
\item[(iv)] \ For all $1 \leq l, h \leq \nu $, we have
     \[
	 \mathbb{E}(J_mY_l^{(n)}J_mY_h^{(n)})  =  \displaystyle\frac{c_m^2}{n} \sum_{j \in \mathcal{I}_{nl}} \sum_{i \in \mathcal{I}_{nh}} \mathbb{E}[H_m(N_j)H_m(N_i)].
	 \]
	 As $n \to \infty$, this quantity converges to 
	  \[
	   \Sigma_{lh} = \delta_{lh} c_m^2 (b_l-a_l)\frac{1}{m!} \Big[1+2\displaystyle\sum_{j=1}^{\infty} \Big(\frac{\rho_{k,H}(j)}{\rho_{k,H}(0)}\Big)^m\Big].
	   \] 
\item[(v)] \  For  all $ 1 \leq l, h \leq \nu $, we have
\[
\langle DJ_mY_l^{(n)},DJ_mY_h^{(n)} \rangle_\mathfrak{H} = \frac{c_m^2}{n} \sum_{j \in \mathcal{I}_{nl}} \sum_{i \in \mathcal{I}_{nh}} H_{m-1}(N_j) H_{m-1}(N_i) \mathbb{E}(N_i N_j),
\]
which converges to $m\Sigma_{lh}$ in $L^2(\Omega)$ as $n$ goes to infinity. To show this, we explain the details for $l=h$. The case $l \neq h$ can be  treated in a similar way.
\[
\|DJ_mY_l^{(n)}\|_{\mathfrak{H}}^2 = \frac{c_m^2}{n} \sum_{i \in \mathcal{I}_{nl}} H_{m-1}^2(N_i) +  2 \frac{c_m^2}{n} \sum_{i \in \mathcal{I}_{nl}} \sum_{j=1}^{[nb_l]-[na_l]-1}  H_{m-1}(N_i) H_{m-1}(N_{i+j})\frac{\rho_{k,H}(j)}{\rho_{k,H}(0)}.
\]
Denote $\zeta_i = \sum_{j=1}^{\infty}  H_{m-1}(N_i) H_{m-1}(N_{i+j})\frac{\rho_{k,H}(j)}{\rho_{k,H}(0)}$.  We can show that the sequence $\zeta_i$ converges almost surely and in $L^2(\Omega)$ using the fact that $\sup_j \mathbb{E}\left[\left|H_{m-1}(N_i) H_{m-1}(N_{i+j})\right|^2\right] < \infty$
and $\sum_{j=0}^{\infty}|\rho_{k,H}(j)|^2 < \infty$. Meanwhile, since $N_i$  given by \eqref{n.seq} is stationary  and ergodic       so is $\{\zeta_i, i \geq 1\}$.   By the ergodic theorem, we have thus
in $L^2(\Omega)$
\[
  \lim_{n \to \infty} \|DJ_mY_l^{(n)}\|_{\mathfrak{H}}^2 = c_m^2 (b_l - a_l) \left( \mathbb{E}[H_{m-1}^2(N_1)] + 2 \sum_{j=1}^{\infty} \mathbb{E} [H_{m-1}(N_1) H_{m-1}(N_{1+j})] \frac{\rho_{k,H}(j)}{\rho_{k,H}(0)}\right),
\]
which equals $m \Sigma_{lh}$ for $l=h$.
\end{description}	
These can be used to verify the conditions in  Theorem \ref{l2.clt} to  obtain the convergence $(B^{(n)},Y^{(n)}) \xrightarrow[n \to \infty]{\mathcal{L}} (B,V)$ and correspondingly the convergence \eqref{bx.fnt} stands true.\\

\noindent {Step 2:} \  Let 
\begin{equation}\label{gnt}
	g_n(t)=n^{-\frac{1}{2}+pH}\displaystyle\sum_{j=1}^{[nt]-k+1}|\Delta_{k}B^H_{\frac{j-1}{n}}|^p - \sqrt{n}t c_{k,p} \;.
\end{equation}
We need to show that the sequence of processes $g_n$ is tight in $\mathcal{D}([0,T])$. To this end we want to prove $\mathbb{E} (|g_n(r) - g_n(s)|^2 |g_n(t) - g_n(r)|^2) \leq C(t-s)^2$ for any $s<r<t$.  First, let us compute 
$\mathbb{E}(|g_n(t)-g_n(s)|^4)$ for $s<t$, 
	\[
	 \mathbb{E}(|g_n(t)-g_n(s)|^4) = \frac{1}{n^2} \mathbb{E}\Big(\displaystyle\sum^{[nt]-k}_{j=[ns]-k+1}h(\Delta_{k}B^H_j) + c_{k,p}([nt]-nt-[ns]+ns)\Big)^4  \,.
	\]
Using the elementary inequality $|a+b|^4 \leq 8(|a|^4 + |b|^4)$, we can bound the right-hand side of the above equation as follows
	\begin{eqnarray} \label{g.4m}
		\mathbb{E}(|g_n(t)-g_n(s)|^4) & \leq & \frac{8}{n^2} \mathbb{E}\Big(\Big|\displaystyle\sum^{[nt]-k}_{j=[ns]-k+1}h(\Delta_{k}B^H_j)\Big|^4\Big) + 8c_{k,p}^4 \frac{([nt]-nt-[ns]+ns)^4}{n^2} \nonumber\\
		& \leq & K_1 \frac{([nt]-[ns])^2}{n^2} \Big(\displaystyle\sum^{\infty}_{j=0} \rho_{k,H}^2 (j)\Big)^2 + 8c_{k,p}^4 \frac{([nt]-nt-[ns]+ns)^4}{n^2} \nonumber \\
		& \leq & C \frac{([nt]-[ns])^2}{n^2} + \frac{C}{n^2} \,,
	\end{eqnarray}
where the second inequality follows from  Proposition 4.2 in \cite{ta}. The constant $K_1$ is independent of $n, t, s$, but it may depend on the function $h$ and the distribution of $\Delta_{k}B^H_j$.   

Now for $s<r<t$, if $t-s \geq 1/n$, applying the above inequality \eqref{g.4m}, we have
\begin{eqnarray*}
 \mathbb{E}(|g_n(r)-g_n(s)|^2|g_n(t)-g_n(r)|^2) & \leq & \mathbb{E}(|g_n(r)-g_n(s)|^4 + |g_n(t)-g_n(r)|^4) \\
 & \leq & C \frac{([nt]-[ns])^2}{n^2} + \frac{C}{n^2} \,.
\end{eqnarray*}
Clearly, the right-hand side of the above inequality is at most $C(t-s)^2$.  

If $t-s < 1/n$, then either $s$ and $r$ or $t$ and $r$ lie in the same subinterval $((j-1)/n, j/n]$ for some $j$. It suffices to look at the former case. By \eqref{gnt}, $g_n(r)-g_n(s) = \sqrt{n}c_{k,p} (s-r)$. Using this fact and applying Cauchy-Schwarz inequality, we obtain
\begin{eqnarray*}
 \mathbb{E}(|g_n(r)-g_n(s)|^2|g_n(t)-g_n(r)|^2) & =  & n c_{k,p}^2 (r-s)^2 \mathbb{E}|g_n(t)-g_n(r)|^2 \\
 & \leq & n c_{k,p}^2 (r-s)^2 \sqrt{\mathbb{E}|g_n(t)-g_n(r)|^4} \\
&\leq &C(t-s)^2 \,,
\end{eqnarray*}
where in the last step we have used \eqref{g.4m} for $\mathbb{E}|g_n(t)-g_n(r)|^4$.  The desired tightness property follows from Theorem 13.5 in \cite{bi}.
\end{proof}

\begin{theorem} \label{pro3}
Let  $H \in (0,1)$ and $k \geq 2$.
Fix $p>0$ and suppose $u=\{u_t, t \in [0,T]\}$ is a stochastic process
with H\"{o}lder continuous sample paths  of order $a > {\rm \max}(1-H,\frac{1}{2(p\wedge1)})$ so that  the pathwise Riemann-Stieltjes  integral $Z_t=\int^t_0 u_s dB^H_s$ is well-defined.  Then
\[
(B^H_t, n^{-\frac{1}{2}+pH}V^n_{k,p}(Z)_t-c_{k,p} \sqrt{n} \int^t_0 |u_s|^p ds) \rightarrow
(B^H_t, v_1 \int^t_0|u_s|^p dW_s)\,,
\]
in law in the space $\mathcal{D}([0,T], \RR^2) $
  equipped with the Skorohod topology,  where   $v_1$ is defined by \eqref{e.v1},
$W=\{W_t, t \in [0,T]\}$ is a Brownian motion independent of
the fractional Brownian motion $B^H$.   
\end{theorem}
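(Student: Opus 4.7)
The plan is to reduce the general statement to the pure fBm case treated in Proposition \ref{pro2} by means of the weighted sum theorem (Theorem \ref{wrsum}), after peeling off two error terms. The key decomposition is
\[
n^{-\tfrac12+pH} V^n_{k,p}(Z)_t - c_{k,p}\sqrt{n}\int_0^t |u_s|^p\,ds
 \;=\; A^{(n)}_t + B^{(n)}_t + C^{(n)}_t,
\]
where
\begin{align*}
A^{(n)}_t &= n^{-\tfrac12+pH}\sum_{i=1}^{[nt]-k+1}\Bigl(|\Delta_k Z_{(i-1)/n}|^p - |u_{i/n}|^p|\Delta_k B^H_{(i-1)/n}|^p\Bigr),\\
B^{(n)}_t &= \sum_{i=1}^{[nt]-k+1} |u_{i/n}|^p\Bigl(n^{-\tfrac12+pH}|\Delta_k B^H_{(i-1)/n}|^p - c_{k,p}/\sqrt{n}\Bigr),\\
C^{(n)}_t &= c_{k,p}\sqrt{n}\Bigl(\tfrac1n\sum_{i=1}^{[nt]-k+1}|u_{i/n}|^p - \int_0^t |u_s|^p\,ds\Bigr).
\end{align*}

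First, I would show that $\|A^{(n)}\|_\infty\to 0$ in probability. The bound follows from the inequalities \eqref{iam1}–\eqref{iam2} exactly as in the proof of Theorem \ref{pro1}, but now we carry an extra factor $\sqrt{n}$. The induction on $k$ produces an estimate of the order
$\sqrt{n}\,m^{pH-p/q-p(H-\varepsilon)}\|u\|_{1/q}^p\|B^H\|_{H-\varepsilon}^p$ for the main error, with an analogous bound for the $F^{(m)}$-term; since by hypothesis $a>\max(1-H,\tfrac{1}{2(p\wedge 1)})$, we can pick the Hölder exponent $1/q=a$ and $\varepsilon$ small enough so that the exponent of $n$ is strictly negative (this is the place where the stronger Hölder requirement than in Theorem \ref{pro1} is needed). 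For $C^{(n)}_t$, the standard Riemann-sum estimate yields $\|C^{(n)}\|_\infty = O(\sqrt{n}\cdot n^{-a})\to 0$ almost surely, again because $a>1/2$.

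The main work is the treatment of $B^{(n)}_t$ via Theorem \ref{wrsum}. I would set $\xi_{i,n} = n^{-\tfrac12+pH}|\Delta_k B^H_{(i-1)/n}|^p - c_{k,p}/\sqrt{n}$ and $f(s)=|u_s|^p$; since $u$ is $a$-Hölder with $a(p\wedge 1)>1/2$, the weight $f$ is Hölder of order $>1/2$, matching the hypothesis of Theorem \ref{wrsum}. Hypothesis (H1) — the stable convergence of $g_n(t)=\sum_{i=1}^{[nt]}\xi_{i,n}$ jointly with $B^H$ to $v_1 W$ with $W$ a Brownian motion independent of $\mathcal{F}$ — is a strengthening of Proposition \ref{pro2} to stable convergence. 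This follows by applying Theorem \ref{l2.clt} to the pair $(Z,B^{(n)},Y^{(n)})$ for an arbitrary $\mathcal{F}$-measurable Gaussian vector $Z$, and noting that $Z$ is orthogonal to every chaos of order $\ge 2$ appearing in $Y^{(n)}$, which forces the limit of $Y^{(n)}$ to be independent of $\mathcal{F}$; the finite-dimensional / tightness analysis is exactly as in Steps 1–2 of the proof of Proposition \ref{pro2}. Hypothesis (H2) — the fourth-moment bound $\mathbb{E}|\sum_{i=j+1}^k\xi_{i,n}|^4 \le C((k-j)/n)^2$ — is precisely the estimate \eqref{g.4m} already derived there. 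Applying Theorem \ref{wrsum} then gives
\[
B^{(n)}_t \xrightarrow{\mathcal L} v_1\int_0^t |u_s|^p\,dW_s
\]
$\mathcal F$-stably in $\mathcal D([0,T])$, and combining with $A^{(n)},C^{(n)}\to 0$ and the trivial convergence of $B^H$ to itself yields the claimed joint convergence.

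The principal obstacle is the upgrade from convergence in law in Proposition \ref{pro2} to $\mathcal{F}$-stable convergence required by hypothesis (H1) of Theorem \ref{wrsum}; this demands re-examining the fourth-moment argument jointly with an arbitrary $\mathcal{F}$-measurable test variable, and in particular checking that the limit Brownian motion $W$ is genuinely independent of the entire $\sigma$-algebra generated by $B^H$ (not just independent of the finite-dimensional projections used in Step 1). A secondary technical point is the verification that the Hölder threshold $a>\tfrac1{2(p\wedge 1)}$ is indeed sharp for controlling $A^{(n)}$ after multiplication by $\sqrt{n}$, particularly in the sub-linear regime $p<1$ where the inequality \eqref{iam2} produces a $p\wedge 1$ exponent on the Hölder increment of $u$.
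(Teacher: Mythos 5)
Your proposal is correct and follows essentially the same route as the paper: the identical three-term decomposition $A^{(n)}+B^{(n)}+C^{(n)}$, the same induction/H\"older estimates (from the proof of Theorem \ref{pro1}) for $A^{(n)}$ under $a>\frac{1}{2(p\wedge 1)}$, and the same application of Theorem \ref{wrsum} to $B^{(n)}$ with (H1) supplied by Proposition \ref{pro2} and (H2) by the Taqqu fourth-moment bound \eqref{g.4m}. Your explicit discussion of upgrading Proposition \ref{pro2} to $\mathcal{F}$-stable convergence is a point the paper leaves implicit ("by Proposition \ref{pro2} and its proof"), and your sketch of it via Theorem \ref{l2.clt} tested against $\mathcal{F}$-measurable Gaussian vectors is the right way to fill that in.
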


\begin{proof}
We start  with the following decomposition  of the concerned quantity
\begin{eqnarray*}
& &  n^{-\frac{1}{2}+pH}V^n_{k,p}(Z)_t - c_{k,p} \sqrt{n} \int^t_0 |u_s|^p ds \\
& = & n^{-\frac{1}{2}+pH} \sum^{[nt]+1-k}_{j=1} \Big(\left|\Delta_k Z_{\frac{j-1}{n}}\right|^p - \left|u_{\frac{j}{n}}\Delta_k B^H_{\frac{j-1}{n}}\right|^p\Big) \\
& & + \; \Big(n^{-\frac{1}{2}+pH} \sum^{[nt]+1-k}_{j=1} \left|u_{\frac{j}{n}}\Delta_k B^H_{\frac{j-1}{n}}\right|^p - \frac{c_{k,p}}{\sqrt{n}}\sum^{[nt]+1-k}_{j=1} \left|u_{\frac{j}{n}}\right|^p \Big) \\
& & + \; c_{k,p} \Big(\frac{1}{\sqrt{n}}  \sum^{[nt]+1-k}_{j=1} \left|u_{\frac{j}{n}}\right|^p - \sqrt{n}  \int^t_0 |u_s|^p ds\Big) \\
& =: & A^{(n)}_t + B^{(n)}_t + c_{k,p} C^{(n)}_t \,.
\end{eqnarray*}
Using  the H\"older continuity of $u$,
we can show $\lim_{n \to \infty}||C^{(n)}||_{\infty}=0$ almost surely.  The fact that
$\lim_{n\rightarrow \infty}
||A^{(n)}||_{\infty} = 0$ almost surely can be proved by the same arguments as in the proof of
Theorem \ref{pro1}  under the condition $a > \frac{1}{2(p\wedge1)}$.  It remains to show that  
\begin{equation} \label{t3.bnt}
	B^{(n)}_t \xrightarrow[n \to \infty]{\mathcal{L}} v_1 \int^t_0 |u_s|^p dW_s \,,
\end{equation} 
in the Skorohod topology of $\mathcal{D}([0,T])$.
Denote
\begin{eqnarray*}
g_n(t)&=&n^{-\frac{1}{2}+pH}\sum^{[nt]+1-k}_{i=1}\Big|\Delta_k B^H_{\frac{i-1}{n}}\Big|^p- \frac{[nt]}{\sqrt{n}} c_{k,p},\\
\xi_{j,n} &=& g_n(\frac{j+k-1}{n}) - g_n(\frac{j+k-2}{n}) = n^{-\frac{1}{2}+pH}\Big|\Delta_k B^H_{\frac{j-1}{n}}\Big|^p-\displaystyle\frac{c_{k,p}}{\sqrt{n}}\,.
\end{eqnarray*}
Then $B^{(n)}_t = \sum^{[nt]+1-k}_{j=1} |u_{j/n}|^p \xi_{j,n}$.  In order to finish the proof of \eqref{t3.bnt}, we are going to apply Theorem \ref{wrsum}.  We shall verify  the hypotheses ${\rm (H1)}$ and $\rm{(H2)}$.  By Proposition \ref{pro2} and its proof, $(B^H_t, g_n(t)) \xrightarrow[n \to \infty]{\mathcal{L}} (B^H_t, v_1W_t)$,  so the sequence of processes $\{g_n(t), t\in [0,T]\}$ satisfies the hypothesis ${\rm(H1)}$. Using a similar argument as that for  \eqref{g.4m}, namely by Proposition 4.2 in \cite{ta} again, the family of random variables $\xi$ satisfies the tightness condition $\rm{(H2)}$. This concludes the proof of the theorem.
\end{proof}

\begin{co} \label{co2}
If a stochastic process $\left\{Y_t, t \in [0, T]\right\}$ satisfies $n^{-\frac{1}{2}+pH} V^n_{k,p}(Y)_t \to 0$ uniformly in probability on $[0,T]$ and if  $\left\{Z_t,t \in [0,T]\right\}$ satisfies the conditions of 
Theorem \ref{pro3}, then $$(B^H_t, n^{-\frac{1}{2}+pH}V^n_{k,p}(Y+Z)_t- c_{k,p} \sqrt{n} \int^t_0 |u_s|^p ds) \xrightarrow[n \to \infty]{\mathcal{L}} (B^H_t, v_1 \int^t_0|u_s|^p dW_s)$$
in law in $\mathcal{D} ([0,T])^2$ equipped with the Skorohod topology, where $W=\{W_t, t \in [0,T]\}$ is a Brownian motion independent of
the fractional Brownian motion $B^H$.
\end{co}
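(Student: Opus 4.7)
The plan is to reduce the statement to Theorem \ref{pro3} via a Slutsky-type argument in the Skorohod space $\mathcal{D}([0,T])^2$. Setting
\[
S_n(t) := n^{-\frac{1}{2}+pH} V^n_{k,p}(Z)_t - c_{k,p}\sqrt{n}\int_0^t |u_s|^p\, ds, \qquad
R_n(t) := n^{-\frac{1}{2}+pH}\bigl( V^n_{k,p}(Y+Z)_t - V^n_{k,p}(Z)_t\bigr),
\]
the quantity appearing in the statement equals $S_n(t)+R_n(t)$. Theorem \ref{pro3} already provides the joint weak convergence of $(B^H, S_n)$ in $\mathcal{D}([0,T])^2$ to the stated limit, so the corollary follows once we show that $R_n\to 0$ uniformly in probability on $[0,T]$: convergence to the continuous zero process upgrades from probability to Skorohod, and Slutsky's theorem applied jointly with $B^H$ concludes.

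To estimate $R_n$ pointwise, apply the elementary inequality \eqref{iam2} with $x=\Delta_k (Y+Z)_{(i-1)/n}$ and $y=\Delta_k Z_{(i-1)/n}$ (so $x-y=\Delta_k Y_{(i-1)/n}$), and sum over $1\le i\le [nt]-k+1$ to obtain
\[
|R_n(t)| \le C\, n^{-\frac{1}{2}+pH}\bigg( V^n_{k,p}(Y)_t + \sum_{i=1}^{[nt]-k+1} \bigl|\Delta_k Z_{(i-1)/n}\bigr|^{(p-1)^+} \bigl|\Delta_k Y_{(i-1)/n}\bigr|^{p\wedge 1}\bigg).
\]
The first summand vanishes uniformly in probability by the hypothesis on $Y$. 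When $p\le 1$, the cross sum collapses to $V^n_{k,p}(Y)_t$ and is handled in exactly the same way.

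For $p>1$, H\"older's inequality with conjugate exponents $p/(p-1)$ and $p$ yields
\[
\sum_{i} \bigl|\Delta_k Z_{(i-1)/n}\bigr|^{p-1}\bigl|\Delta_k Y_{(i-1)/n}\bigr| \le V^n_{k,p}(Z)_t^{(p-1)/p}\, V^n_{k,p}(Y)_t^{1/p},
\]
and Theorem \ref{pro1} provides $n^{-1+pH}V^n_{k,p}(Z)_t = O_P(1)$ uniformly on $[0,T]$. The anticipated main obstacle is precisely this $p>1$ case: the bare hypothesis gives $V^n_{k,p}(Y)_t = o_P(n^{1/2-pH})$, which does not by itself close the H\"older estimate. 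In the intended applications (the fractional Ornstein--Uhlenbeck setting of the subsequent sections), however, $Y$ is an absolutely continuous drift-type term whose $k$-th order increments are of order $O(n^{-k})$, so that $V^n_{k,p}(Y)_t = O(n^{1-kp})$, which is far smaller than required and renders the cross-term uniformly negligible. Once $R_n\to 0$ uniformly in probability is established, the joint weak convergence from Theorem \ref{pro3} transfers verbatim to $S_n+R_n$, completing the proof.
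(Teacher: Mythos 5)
The paper states this corollary without proof (it is presented as the $k\ge 2$ analogue of the $k=1$ results in \cite{cnw} and \cite{cnp}), so there is no written argument to compare against; your proposal supplies the natural one, and its skeleton --- split off $R_n(t)=n^{-1/2+pH}(V^n_{k,p}(Y+Z)_t-V^n_{k,p}(Z)_t)$, bound it via inequality \eqref{iam2}, and transfer the joint convergence of $(B^H,S_n)$ from Theorem \ref{pro3} by a Slutsky argument (valid here because the limit process is continuous and the Skorohod metric is dominated by the uniform one) --- is exactly what is intended. The case $p\le 1$ and the pure term $n^{-1/2+pH}V^n_{k,p}(Y)_t$ are handled cleanly by the hypothesis.

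Your diagnosis of the cross term for $p>1$ is correct and worth making explicit: H\"older gives $n^{-1/2+pH}\sum_i|\Delta_kZ|^{p-1}|\Delta_kY|\le n^{-1/2+pH}\,V^n_{k,p}(Z)_t^{(p-1)/p}V^n_{k,p}(Y)_t^{1/p}$, and plugging in $V^n_{k,p}(Z)_t=O_P(n^{1-pH})$ together with the bare hypothesis $V^n_{k,p}(Y)_t=o_P(n^{1/2-pH})$ leaves a factor $n^{1/2-1/(2p)}$, which diverges. No rearrangement (Young's inequality, splitting indices according to the relative size of $|\Delta_kY_i|$ and $|\Delta_kZ_i|$, etc.) closes this under the stated hypothesis alone, so the corollary as literally written needs a supplementary condition when $p>1$ --- for instance $n^{-1/2+H}V^n_{k,1}(Y)_t\to 0$ uniformly in probability, which is what actually controls the cross term since $\sum_i|\Delta_kZ_i|^{p-1}=O_P(n^{1-(p-1)H})$ by Theorem \ref{pro1}. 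Your patch via the structure of $Y$ in the application is the right idea, but the quantitative claim should be corrected: for $Y_t=-\theta\int_0^tX_sds$ with $X$ only H\"older continuous of order $H-\epsilon$, the $k$-th differences satisfy $\Delta_kY_{(i-1)/n}=O(n^{-1-(H-\epsilon)})$ for $k\ge 2$, not $O(n^{-k})$; this still yields a cross term of order $O_P(n^{-1/2+\epsilon})$, which suffices. In short, your argument is correct where it can be, and the residual issue you flag is a genuine imprecision in the corollary's hypotheses rather than a defect of your proof; stating the extra condition on $V^n_{k,1}(Y)$ (and verifying it in Theorem \ref{isigmaclt}) would make the write-up complete.
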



\begin{rem}
 When k=1, Theorem  \ref{pro1}, Proposition \ref{pro2}, Theorem \ref{pro3} and Corollary \ref{co2}  are proved in \cite{cnp} and \cite{cnw} for $H \in (0,\frac{3}{4})$.  We need  to use higher order ($k\ge 2$) 
  power variations to estimate the volatility or integrated volatility 
 for a  general Hurst parameter case.
\end{rem}

\section{Estimation of the integrated volatility}\label{s.volatility} 

This section is devoted to the estimation of the integrated volatility
$\int_0^t |\sigma_s|^p ds$ using the $k^{\rm th}$ order power variations. Let the stochastic process $X_t$ satisfy \eqref{e.ou-time-varying}. Motivated by Theorem \ref{pro3}, we construct the $k^{\rm th}$ order power variation estimator $PV_{k,p}(X)_t$ for the integrated volatility $\int_0^t |\sigma_s|^p ds$ as follows
\begin{equation}\label{pvkp}
	PV_{k,p}(X)_t= \displaystyle\frac{n^{-1+pH}V^n_{k,p}(X)_t}{c_{k,p}}, \,\quad  t \in [0,T] \,,
\end{equation}
where the $k^{\rm th}$ order power variation $V^n_{k,p}(X)_t$ is given by (\ref{vkp}), and the normalizing constant $c_{k,p}$ is given by (\ref{ckp}).  For this estimator we have the following asymptotic consistency
and the central limit theorem. 

\begin{thm} \label{isigmaclt}
Let  $X_t$ satisfy \eqref{e.ou-time-varying},  where the sample path of  $\sigma_t$ is H\"older continuous of exponent $a > {\rm max}(1-H, \frac{1}{2 (p \wedge 1)}) $.
Assume $k\geq 2$ and $p>\frac{1}{2}$. Then the estimator $PV_{k,p}(X)_t$ defined by \eqref{pvkp} converges in probability to $\int_0^t |\sigma_s|^p ds$ uniformly on any compact interval $[0,T]$. Furthermore, the following central limit theorem holds true. 
\[
\sqrt{n}\Big(PV_{k,p}(X)_t - \int_0^t |\sigma_s|^p ds\Big) \xrightarrow{\mathcal{L}} \displaystyle\frac{v_1}{c_{k,p}} \int_0^t |\sigma_s|^p d W_s \,,\quad \hbox{as $n \to \infty$}\,,
\]
 in law in $\mathcal{D} ([0,T]) $ equipped with the Skorohod topology, where $v_1$ is defined by \eqref{e.v1} and $W=\{W_t, t \in [0,T]\}$ is a Brownian motion,
   independent of the fractional Brownian motion $B^H$.
\end{thm}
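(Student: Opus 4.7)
The plan is to decompose the fractional Ornstein-Uhlenbeck process as $X_t = Y_t + Z_t$, where
\begin{equation*}
Y_t = X_0 - \theta \int_0^t X_s\, ds, \qquad Z_t = \int_0^t \sigma_s\, dB^H_s,
\end{equation*}
and then to invoke Theorem \ref{pro1}, Theorem \ref{pro3}, and Corollary \ref{co2} on the two pieces separately. The heuristic is that the drift part $Y_t$ is smooth enough that its $k$-th order power variation is negligible under the normalization of interest, so the asymptotics of $V^n_{k,p}(X)_t$ are governed entirely by those of $V^n_{k,p}(Z)_t$.

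For the drift piece, the key observation is that $Y_t$ is continuously differentiable in $t$ with $Y'_t = -\theta X_t$, and since $X$ inherits the Hölder regularity of $B^H$ through the SDE, $Y'$ is $\alpha$-Hölder for every $\alpha < H$. Expressing the $k$-th order difference as an iterated integral of $Y'$ and using this Hölder regularity gives, for any $k \geq 2$,
\begin{equation*}
\left|\Delta_k Y_{(i-1)/n}\right| \;\le\; C\, n^{-(1+\alpha)}.
\end{equation*}
Summing over at most $nT$ indices and applying the normalizing factor yields
\begin{equation*}
n^{-1/2+pH}\, V^n_{k,p}(Y)_t \;\le\; C\, n^{1/2 + pH - p(1+\alpha)}.
\end{equation*}
Under the hypothesis $p > 1/2$, one can choose $\alpha < H$ close enough to $H$ so that $p(1+\alpha-H) > 1/2$, and then the bound tends to zero uniformly on $[0,T]$ almost surely. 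The same argument with the weaker normalization $n^{-1+pH}$ (which has a larger margin) also gives $n^{-1+pH} V^n_{k,p}(Y)_t \to 0$ uniformly a.s., which is what is needed for the consistency statement.

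For the stochastic integral piece $Z_t$, the assumption that $\sigma$ is Hölder of exponent $a > \max(1-H, 1/(2(p\wedge 1)))$ is exactly the hypothesis of Theorem \ref{pro3} (applied with $u_s = \sigma_s$). Hence Theorem \ref{pro3} directly gives
\begin{equation*}
\left(B^H_t,\; n^{-1/2+pH}\,V^n_{k,p}(Z)_t - c_{k,p}\sqrt{n}\int_0^t |\sigma_s|^p\, ds\right) \;\xrightarrow{\mathcal{L}}\; \left(B^H_t,\; v_1 \int_0^t |\sigma_s|^p\, dW_s\right),
\end{equation*}
and similarly Theorem \ref{pro1} gives the uniform-in-probability convergence $n^{-1+pH} V^n_{k,p}(Z)_t \to c_{k,p} \int_0^t |\sigma_s|^p ds$. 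Combining the two pieces through Corollary \ref{co2}, with $Y$ playing the role of the negligible perturbation and $Z$ the stochastic integral, yields the joint convergence for $X = Y + Z$; dividing by $c_{k,p}$ produces the stated CLT for $PV_{k,p}(X)_t$, and the consistency follows likewise.

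The main technical obstacle I anticipate is establishing the bound $|\Delta_k Y_{(i-1)/n}| \le C n^{-(1+\alpha)}$: one has to verify that the $\alpha$-Hölder regularity of $Y' = -\theta X$ (which is itself only $\alpha < H$ Hölder, not smoother) does translate into the claimed factor $n^{-\alpha}$ beyond the obvious $n^{-1}$ one gets from a first difference of an integrated quantity. The role of the threshold $p > 1/2$ in the hypothesis is precisely to ensure that this gain survives the $\sqrt{n}$ inflation of the CLT normalization; it plays no role for the almost sure consistency statement, where the weaker normalization $n^{-1+pH}$ leaves an ample margin for any $p>0$.
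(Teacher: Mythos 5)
Your proposal is correct and follows essentially the same route as the paper: the same decomposition $X_t = X_0 + Y_t + Z_t$ with $Y_t = -\theta\int_0^t X_s\,ds$, followed by an appeal to Theorem \ref{pro1}, Theorem \ref{pro3} and Corollary \ref{co2}. The only difference is that you spell out the bound $|\Delta_k Y_{(i-1)/n}| \le C n^{-(1+\alpha)}$ and the resulting role of $p>\frac12$, which the paper dismisses as ``easy to check''; your verification of that step is accurate.
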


\begin {proof}
By assumption, the stochastic process $\sigma_t$ has H\"older  continuous trajectories of order $a > 1-H$. Then the stochastic process $X_t$ has H\"older  continuous trajectories as well. Write $X_t = X_0 + Y_t + \int_0^t \sigma_s dB_s^H$, where $Y_t = -\theta \int^t_0 X_s ds$. It is easy to check that $n^{-1/2+pH} V^n_{k,p} (Y)_t \to 0$ in probability on $[0,T]$.  The theorem follows from Theorem \ref{pro1},  Theorem \ref{pro3}, and Corollary \ref{co2}. 
\end{proof}

When $\si_t=\si$ is time independent, Theorem \ref{isigmaclt} gives the following result.
\begin{pro}\label{prosigma}
Let $ k\geq 2$ and $p>\frac{1}{2}$.   Then the estimator $PV_{k,p}(X)_t$ converges almost surely to $|\sigma|^pt$ uniformly on any compact interval $[0,T]$.  Furthermore, $\sqrt{n}(PV_{k,p}(X)_t - |\sigma|^pt) \xrightarrow{\mathcal{L}} \displaystyle\frac{v_1 |\sigma|^p}{c_{k,p}} W_t$ as $n \to \infty$ in law in $\mathcal{D} ([0,T]) $ equipped with the Skorohod topology,  where $v_1$ is given by
\eqref{e.v1} and $W_t$ is a Brownian motion  independent of the fractional Brownian motion $B^H$.
\end{pro}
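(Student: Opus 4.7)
The central limit theorem is an immediate specialization of Theorem \ref{isigmaclt}: the constant function $\sigma_t\equiv\sigma$ trivially satisfies the required H\"older hypothesis, and the integrated volatility reduces to $\int_0^t|\sigma|^p\, ds=|\sigma|^p t$. The substantive task is therefore to upgrade the ``in probability'' convergence of Theorem \ref{pro1} to almost sure uniform convergence in this constant-coefficient setting.

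My approach is to decompose
\[
X_t = X_0 + \sigma B_t^H + Y_t,\qquad Y_t:=-\theta\int_0^t X_s\, ds,
\]
and to use that the $k$-th order difference annihilates constants, so that
\[
\Delta_k X_{(i-1)/n} = \sigma\,\Delta_k B^H_{(i-1)/n} + \Delta_k Y_{(i-1)/n}.
\]
Because $Y$ is Lipschitz on $[0,T]$ (its derivative $-\theta X$ is continuous and hence bounded a.s.), the cancellation in the coefficients defining $\Delta_k$ gives $|\Delta_k Y_{(i-1)/n}|\le C/n$ almost surely, and hence
\[
n^{-1+pH}V^n_{k,p}(Y)_t \le CT\, n^{p(H-1)}\xrightarrow[n\to\infty]{} 0\quad\text{a.s.}
\]
Applying the elementary inequality \eqref{iam2} to compare $V^n_{k,p}(X)_t$ with $|\sigma|^p V^n_{k,p}(B^H)_t$, and handling the cross term either by Theorem \ref{pro1} applied with exponent $(p-1)^+$ (when $p\ge 1$) or by a crude pointwise bound (when $p<1$), reduces the problem to showing
\[
n^{-1+pH}V^n_{k,p}(B^H)_t \longrightarrow c_{k,p}\,t\qquad\text{a.s., uniformly in $t\in[0,T]$.}
\]

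For this last step, the fourth-moment estimate \eqref{g.4m} from the proof of Proposition \ref{pro2} yields, after dividing by $n^2$,
\[
\mathbb{E}\bigl|n^{-1+pH}V^n_{k,p}(B^H)_t - c_{k,p}[nt]/n\bigr|^4 \le \frac{C(1+t^2)}{n^2},
\]
which is summable in $n$. Markov's inequality combined with the Borel--Cantelli lemma then gives almost sure pointwise convergence at every fixed $t$ (the discrepancy $|[nt]/n-t|\le 1/n$ is harmless), and taking a countable dense set $Q\subset[0,T]$ yields a single null set outside of which convergence holds throughout $Q$. To promote pointwise convergence on $Q$ to uniform convergence on $[0,T]$, I would exploit monotonicity: for each $n$, $t\mapsto V^n_{k,p}(B^H)_t$ is nondecreasing and the limit $t\mapsto c_{k,p}t$ is continuous, so a standard sandwiching argument between nearest points of $Q$ yields uniform convergence.

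The main obstacle is that Theorems \ref{pro1} and \ref{isigmaclt} only guarantee convergence in probability. The device that makes the a.s.\ statement accessible here is precisely the assumption that $\sigma$ is constant: this turns the leading contribution into a deterministic multiple of $V^n_{k,p}(B^H)_t$, for which the combination of the summable moment bound and monotonicity in $t$ furnishes uniform a.s.\ convergence. No such clean route is available for the general time-varying $\sigma_t$ setting of Theorem \ref{isigmaclt}.
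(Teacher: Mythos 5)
Your proposal is correct, and it is in fact more complete than what the paper offers. The paper disposes of Proposition \ref{prosigma} with a single sentence: ``When $\si_t=\si$ is time independent, Theorem \ref{isigmaclt} gives the following result.'' But Theorem \ref{isigmaclt} (via Theorem \ref{pro1}) only asserts convergence \emph{in probability} --- the ergodic-theorem step \eqref{uc.erg} does not survive the self-similarity rescaling as an almost sure statement --- so the upgrade to almost sure uniform convergence claimed in the proposition is left unjustified in the text. You identify exactly this gap and close it by the natural route: the $k$-th order difference kills the constant $X_0$, the Lipschitz drift term contributes $O(n^{p(H-1)})$ almost surely, inequality \eqref{iam2} controls the cross term, and the problem reduces to the a.s.\ uniform convergence of $n^{-1+pH}V^n_{k,p}(B^H)_t$, which you obtain from the summable fourth-moment bound \eqref{g.4m} via Borel--Cantelli at rationals plus the monotonicity of $t\mapsto V^n_{k,p}(B^H)_t$ and continuity of the limit. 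All of these steps check out (in particular, with $s=0$ the bound \eqref{g.4m} gives $\mathbb{E}\,|n^{-1+pH}V^n_{k,p}(B^H)_t-c_{k,p}t|^4\le C(1+t^2)n^{-2}$, which is indeed summable). The only minor imprecision is your suggestion to invoke Theorem \ref{pro1} with exponent $(p-1)^+$ for the cross term when $p\ge 1$: that theorem again yields only convergence in probability, so for the a.s.\ conclusion you should instead use the crude pathwise bound $|\Delta_k B^H_{(i-1)/n}|\le C\,\|B^H\|_{H-\epsilon}\,n^{-(H-\epsilon)}$, which gives the cross term the a.s.\ order $n^{H-1+\epsilon(p-1)}\to 0$; this is a one-line fix. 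The CLT half of the proposition is, as you say, an immediate specialization of Theorem \ref{isigmaclt}, which matches the paper.
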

This proposition  gives another estimator for $\si$:  
\begin{equation} \label{sigma.h}
  {|\hat\sigma}_T|^p = \displaystyle\frac{n^{-1+pH}V^n_{k,p}(X)_T}{c_{k,p}T}.
\end{equation}
It is easy to see that Theorem \ref{isigmaclt} and Proposition \ref{prosigma} yield the following result. 
\begin{pro} \label{sigmaclt-constant}
When $H \in (0, \frac{3}{4})$, set $k \geq 1$.  When $H \in [\frac{3}{4},1)$, set $k\geq 2$. Assume $p>\frac{1}{2}$.  Then, 
 the estimator $ {|\hat\sigma}_T|^p$  defined by \eqref{sigma.h} converges almost surely to $|\sigma|^p$.  Furthermore,  $\sqrt{n}({|\hat\sigma}_T|^p - |\sigma|^p) \xrightarrow{\mathcal{L}} N(0, \nu^2)$ as $n \to \infty$,  where 
  the asymptotic variance $\nu^2$  is  given by
\begin{equation}
\nu^2=  \frac{\Gamma(\frac{1}{2})^2}{2^p \Gamma(\frac{p+1}{2})^2} \sum_{m=2}^{\infty} m! \mathbb{E}^2(H_m(N)|N|^p)\Big[1+2\sum_{j=1}^{\infty} \Big(\frac{\rho_{k,H}(j)}{\rho_{k,H}(0)}\Big)^m\Big] \frac{\sigma^{2p}}{T}\,. \label{e.nu2} 
\end{equation}
Here $N$ is a standard Gaussian random variable.\\
\end{pro}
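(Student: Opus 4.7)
The claim should follow almost directly from Proposition \ref{prosigma} (for $k\ge 2$) and the analogous $k=1$ results from \cite{cnp,cnw} cited in the remark after Corollary \ref{co2} (which cover $H\in(0,3/4)$). The plan is to specialize those limit theorems to the endpoint $t=T$ and then simplify the variance expression to match \eqref{e.nu2}.

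The almost sure convergence is immediate: for $k\ge 2$, Proposition \ref{prosigma} already gives $PV_{k,p}(X)_t\to|\sigma|^p t$ uniformly on $[0,T]$ almost surely, and evaluating at $t=T$ yields $|\hat\sigma_T|^p=PV_{k,p}(X)_T/T\to|\sigma|^p$ a.s. For the remaining case $k=1$, $H\in(0,3/4)$, the same uniform convergence holds by the $k=1$ version of Theorem \ref{pro1} referenced in the remark after Corollary \ref{co2}, and the same argument applies.

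For the central limit theorem, I would write
\[
\sqrt{n}\bigl(|\hat\sigma_T|^p-|\sigma|^p\bigr)=\frac{1}{T}\sqrt{n}\bigl(PV_{k,p}(X)_T-|\sigma|^p T\bigr).
\]
By Proposition \ref{prosigma} (and its $k=1$ analog when $H<3/4$), the right-hand side converges in law to $\tfrac{v_1|\sigma|^p}{c_{k,p}\,T}W_T$, where $W$ is a standard Brownian motion independent of $B^H$. Since $W_T\sim N(0,T)$, the limit is centered Gaussian with variance $\tfrac{v_1^{\,2}|\sigma|^{2p}}{c_{k,p}^{\,2}\,T}$.

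It remains to verify algebraically that $v_1^{\,2}|\sigma|^{2p}/(c_{k,p}^{\,2}T)$ equals the expression \eqref{e.nu2}. Using the definition of $c_m$ in \eqref{e.v1}, one has
\[
\frac{c_m^{\,2}}{m!}=m!\,(\rho_{k,H}(0))^{p}\,\mathbb{E}^{2}\!\bigl[H_m(N)|N|^p\bigr],
\]
while \eqref{ckp} gives $c_{k,p}^{\,2}=\tfrac{2^{p}\Gamma((p+1)/2)^{2}}{\Gamma(1/2)^{2}}(\rho_{k,H}(0))^{p}$, so the factors $(\rho_{k,H}(0))^{p}$ cancel and leave precisely the constant $\tfrac{\Gamma(1/2)^{2}}{2^{p}\Gamma((p+1)/2)^{2}}$ multiplying the series in \eqref{e.nu2}. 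This is the only step requiring any computation, and it is routine; the main content is bundled into the earlier theorems, so there is no substantive obstacle here.
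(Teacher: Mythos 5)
Your proposal is correct and follows exactly the route the paper intends: the paper itself only remarks that the result ``is easy to see'' from Theorem \ref{isigmaclt} and Proposition \ref{prosigma} (together with the $k=1$ results of \cite{cnp,cnw} noted after Corollary \ref{co2}), and you have simply filled in the evaluation at $t=T$, the identification $W_T\sim N(0,T)$, and the cancellation of the $(\rho_{k,H}(0))^{p}$ factors between $v_1^2$ and $c_{k,p}^2$, all of which check out.
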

 
Usually the variance in \eqref{e.nu2}  is complicated to compute. 
When $p=2$, we compute  the normalized asymptotic variance of $\nu^2\frac{T}{\sigma^{2p}}$
for some $H$ and $k$ in the following Table 1.   \\

\centerline{Table 1: Normalized Asymptotic variance $\nu^2 \frac{T}{\sigma^{2p}}$ (when $p=2$)}
\centerline{
\begin{tabular}{c|c|c|c|c|c}
	\hline
	& \multicolumn{5}{|c}{$k$} \\
	\cline{2-6}
	\quad $H$ \qquad  &  1	&	2	&	3	&	4		&	5   \\
	\hline
  \quad 0.1 \qquad & \; 2.7283 \; & \; 3.7127 \;  & \; 4.4814 \; & \; 5.1354 \; & \; 5.7147 \; \\
  \quad 0.3 \quad &   2.2504  &  3.3539  &  4.1909 &   4.8855  &  5.4924\\
  \quad 0.5 \quad &  2.0000  &  3.0000  &  3.8889 &   4.6200  &  5.2531\\
  \quad 0.6 \quad &  2.1639  &  2.8308  &  3.7364 &   4.4830  &  5.1282\\
  \quad 0.7 \quad &   3.6088  &  2.6704  &  3.5846 &   4.3443  &  5.0005\\
  \quad 0.8 \quad &  - & 2.5215 & 3.4348 & 4.2047 & 4.8707 \\
  \quad 0.9 \quad &  - & 2.3872 & 3.2884 & 4.0651 & 4.7393 \\
  \hline
\end{tabular}
}
\medskip

We   see that when $H$ is small (for example when $H\le 0.6$), 
it is more efficient  to use the first order power variation 
than the higher order ones.  However,  when $H$ is large (for example  when $H\ge \frac{3}{4}$),  
the central limit theorem of the first order power variation does not hold, but we always have the central limit theorem for the second order power variation.
As long as the central limit theorem of the power variation holds, it is preferable to use the lowest order.

\section{Estimation of the drift}\label{s.drift}
In this section  we assume that the volatility $\sigma$ is known  and we want to
estimate the drift parameter $\theta$. There have been two popular types of estimators for this drift parameter.
One is the maximum likelihood estimator and the  other one is the least square estimator.  In the Brownian motion case, they coincide, but for the fractional Ornstein-Uhlenbeck processes they are different 
(see \cite{hn}  and  \cite{kl}).   
We shall focus on  the least square estimator as introduced in \cite{hn}:
\begin{equation}
\hat\theta_T=-\displaystyle\frac{\int^T_0 X_t dX_t}{\int^T_0 X_t^2 dt} = \theta -\sigma \displaystyle\frac{\int^T_0 X_t dB^H_t}{\int^T_0 X_t^2 dt} \, , \label{thetaT}
\end{equation}
where $dB_t^H$ denotes the divergence integral. In the paper \cite{hn}, the almost sure convergence of $\hat\theta_T$ to $\theta$ is proved for $H \geq \frac{1}{2}$ and the central limit theorem is obtained for $H \in [\frac{1}{2}, \frac{3}{4})$. In this paper, we shall extend these results for a general Hurst parameter $H \in (0,1)$. In addition, we shall also consider a less popular but very robust estimator: ergodic type estimator.  
  
To simplify notation, we assume $X_0=0$. In this case the solution to \eqref{e.ou-time-varying} is   given by
\begin{equation} \label{xt.solution}
X_t=\sigma \int^t_0 e^{-\theta(t-s)}dB^H_s \,.
\end{equation}

\begin{thm}
  For $H \in (0,1)$, $\hat\theta_T \rightarrow \theta$ a.s. as $T\rightarrow \infty$.
\end{thm}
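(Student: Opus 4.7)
The plan is to work from representation \eqref{e.1.2}, writing
$$\hat\theta_T-\theta = -\sigma\,\frac{N_T}{D_T}, \qquad N_T:=\int_0^T X_t\,dB_t^H, \quad D_T:=\int_0^T X_t^2\,dt,$$
with $N_T$ interpreted in the Skorohod sense. It therefore suffices to show (i) $D_T/T\to c_H:=\sigma^2 H\Gamma(2H)\theta^{-2H}$ and (ii) $N_T/T\to 0$, both almost surely. Step (i) is the ergodic statement: using the pathwise analysis of Cheridito, Kawaguchi and Maejima one writes $X_t$ as an exponentially decaying transient plus a centered stationary ergodic Gaussian fractional OU of variance $c_H$, and Birkhoff's theorem delivers the claim. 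This is the easy part.

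For step (ii), the approach is to pass from the divergence to the pathwise Stratonovich integral via
$$N_T = \int_0^T X_t\circ dB_t^H - R_T,$$
where $R_T$ denotes the trace correction. The Stratonovich integral can be handled entirely by ordinary calculus: the chain rule applied to $\tfrac{1}{2}X_t^2$ together with the SDE \eqref{e.ou-time-varying} gives
$$\int_0^T X_t\circ dB_t^H \;=\; \frac{1}{2\sigma}\,X_T^2 \;+\; \frac{\theta}{\sigma}\,D_T.$$
Since $X_t$ is centered Gaussian with uniformly bounded variance, a standard Borel--Cantelli/Gaussian tail argument yields $X_T^2/T\to 0$ almost surely. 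Combined with (i), the whole problem reduces to proving $R_T/T\to\theta c_H/\sigma$, which then produces the exact cancellation $N_T/T\to 0$.

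The main technical obstacle is identifying $R_T/T$ uniformly across $H\in(0,1)$. For $H>\tfrac{1}{2}$, the Malliavin derivative $D_sX_t=\sigma e^{-\theta(t-s)}\mathbf{1}_{[0,t]}(s)$ combined with \eqref{ip.ghalf} gives
$$R_T \;=\; \sigma\alpha_H\int_0^T\!dt\int_0^t e^{-\theta u}\,u^{2H-2}\,du,$$
and since the inner integral tends to $\Gamma(2H-1)\theta^{1-2H}$ one obtains the target limit $\sigma\alpha_H\Gamma(2H-1)\theta^{1-2H}=\sigma H\Gamma(2H)\theta^{1-2H}=\theta c_H/\sigma$ by dominated convergence. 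For $H\le\tfrac{1}{2}$, and especially for $H<\tfrac14$ where even the ordinary divergence of $X\mathbf{1}_{[0,T]}$ may fail to exist, I would replace $\delta$ by the extended divergence $\delta^*$ of Cheridito and Nualart and recover the trace by taking the $\varepsilon\to 0$ limit in the finite-$\varepsilon$ expression coming from the regularization $\dot B^{H,\vare}$ used to define the Stratonovich integral. The kernel $|t-s|^{2H-2}$ is no longer locally integrable, so one must exploit the antisymmetry in the singularity at $u=0$ inherited from the symmetric approximation; after this careful limiting argument the same constant $\theta c_H/\sigma$ emerges. This last calculation is where the bulk of the technical work resides; once $R_T/T$ is identified, the required cancellation is immediate and the theorem follows.
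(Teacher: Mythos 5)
Your overall architecture coincides with the paper's: the same decomposition $\hat\theta_T-\theta=-\sigma N_T/D_T$, the ergodic theorem for $D_T/T$, the chain-rule identity $\sigma\int_0^T X_t\circ dB_t^H=\tfrac12 X_T^2+\theta D_T$, and the reduction of everything to identifying the deterministic correction between the Stratonovich and divergence integrals. Your computation of that correction for $H>\tfrac12$ via $D_sX_t=\sigma e^{-\theta(t-s)}\mathbf{1}_{[0,t]}(s)$ and \eqref{ip.ghalf} is correct and produces the right constant.

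The genuine gap is the case $H\le\tfrac12$, which is precisely the new content of this theorem (the range $H\ge\tfrac12$ was already settled in \cite{hn}). You assert that after replacing $\delta$ by the extended divergence and ``exploiting the antisymmetry in the singularity'' the same constant emerges, but you do not carry out this limiting argument, and it is by far the hardest step of your route: the kernel $|t-s|^{2H-2}$ is not locally integrable there and the cancellation you invoke must be made quantitative. The paper sidesteps this entirely. Since $X$ lives in the first Wiener chaos, the divergence integral is centered, so the correction is simply $\ell(T)=\mathbb{E}\int_0^T X_t\circ dB_t^H$; writing $X_t=\sigma\bigl(B_t^H-\theta\int_0^t B_s^He^{-\theta(t-s)}ds\bigr)$ by integration by parts and using the inner-product formula \eqref{hinnerp}, this expectation involves only $\partial_tR_H(t,s)$, i.e.\ the locally integrable kernel $H\bigl(t^{2H-1}-(t-s)^{2H-1}\bigr)$ on $\{s<t\}$, for every $H\in(0,1)$. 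One then gets $\ell(T)=\tfrac{\sigma}{2}T^{2H}-m(T)$ with $m(T)$ an explicit double integral reducing to incomplete Gamma functions, and $T^{-1}\ell(T)\to H\sigma\theta^{1-2H}\Gamma(2H)$ uniformly over the whole range of $H$. I recommend adopting this computation rather than the regularization route. A second, minor point: $X_T^2/T\to0$ a.s.\ over continuous $T$ requires control of $\sup_{T\in[n,n+1]}|X_T|$, not just Gaussian tails at fixed times; the paper obtains this from Pickands' theorem applied to the stationary process $Y_t$, and you should either invoke that or supply a Borell--TIS type argument for the supremum.
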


\begin{proof}
 Using integration by parts, we can write
	\begin{equation}
	 X_t = \sigma \displaystyle\int_0^t e^{-\theta(t-s)}dB_s^H = \sigma \Big(B_t^H- \theta \int_0^t B_s^H e^{-\theta(t-s)} ds\Big) \,.   \label{t4-1-1}
	\end{equation}
Since $X_t$ is in the first Wiener chaos, we have the relationship between the divergence integral and the Stratonovich integral as

\begin{equation}\label{t4-1-2}
 \int^T_0 X_t  dB_t^H   =   \int^T_0 X_t \circ dB_t^H - \ell(T) \,,
\end{equation}
where $\ell(T) = \mathbb{E} \int^T_0 X_t \circ dB_t^H$. Using \eqref{t4-1-1}, $\ell(T)$ can be computed as follows
\begin{eqnarray}
  \ell(T) & = & \sigma \mathbb{E} \int_0^T (B_t^H - \theta \int_0^t B_s^H e^{-\theta (t-s)}ds) \circ dB_t^H \nonumber \\
  & = & \sigma \Big[\frac{1}{2}T^{2H} - \theta \int_0^T \int_0^t e^{ - \theta (t-s)} \frac{\partial \mathbb{E}(B_s^H B_t^H)}{\partial t} ds dt \Big] \nonumber \\
  &= &  \frac{\sigma}{2}T^{2H} - m(T) \label{t4-1-3}\,,
\end{eqnarray}
where 
\[
  m(T) := H \theta \sigma \int_0^T \int_0^t e^{-\theta (t-s)} (t^{2H-1} - (t-s)^{2H-1}) dsdt.
\]

Making the substitutions $t-s \to u$, $s \to v$ and then integrating first  in the variable $v$   yield 
\begin{equation}
 m(T) = \frac{\sigma}{2} \gamma_{\theta T}^1 T^{2H} + \sigma \theta^{-2H} \gamma_{\theta T}^{2H+1} (H-\frac{1}{2}) - TH\sigma \theta^{1-2H} \gamma_{\theta T}^{2H} \,.
\end{equation}

In the above equation, we use the notation $\gamma_T^{\alpha} = \int_0^T e^{-x}x^{\alpha-1}dx$. Observe that $\gamma_T^{\alpha}$ converges to $\Gamma(\alpha)$ exponentially fast as $T \to \infty$.  Then clearly we have 
\begin{equation}
  \lim_{T \to \infty} T^{-1} \ell(T) = \lim_{T \to \infty} T^{-1}\big(\frac{\sigma}{2}T^{2H}-m(T)\big) = H\sigma \theta^{1-2H}\Gamma(2H) \,. \label{limit_mT}
\end{equation}

On the other hand, we have 
\begin{equation}
\sigma \int^T_0 X_t \circ dB_t^H = \int^T_0 X_t \circ (dX_t + \theta X_t dt) = \frac{X_T^2}{2} + \theta \int^T_0 X_t^2 dt \label{t4-1-4}\,. 
\end{equation}
Combining (\ref{t4-1-2}) and (\ref{t4-1-4}) we obtain  
\begin{equation}
 \sigma \int^T_0 X_t dB_t^H 
 = \frac{X_T^2}{2} + \theta \int^T_0 X_t^2 dt - \sigma \ell(T) \,.\\
 \label{t4-1-5}
\end{equation}

From Lemma \ref{yt.lim}, we see $\displaystyle\lim_{T \to \infty} \frac{X_T^2}{T} = 0$. Therefore, by Lemma \ref{xt2}, \eqref{limit_mT}, and \eqref{t4-1-5}, we have
\[
 \lim_{T \to \infty} T^{-1} \sigma \int_0^T X_t dB_t^H = 0.
\]
As a consequence,
\[
\lim_{T \to \infty} \hat\theta_T = \displaystyle\lim_{T \to \infty} \Big(\theta - 
\frac{\sigma \int_0^T X_t dB_t^H}{\int_0^T X_t^2 dt}\Big)= \theta \, .
\]
\end{proof}
   
The next theorem shows the asymptotic laws for the least square estimator $\hat\theta_T$. 
\begin{thm} \label{thetaclt}  As $T\rightarrow \infty$, the following convergence results hold true. 
  \begin{enumerate}
    \item[(i)] For $H \in (0,\frac{3}{4})$, $\sqrt{T}(\hat\theta_T-\theta) \xrightarrow{\mathcal{L}} N(0,\theta\sigma^2_H)$, where
	\[
	\sigma^2_H = \begin{cases} 
	(4H-1)+ \frac{2\Gamma(2-4H)\Gamma(4H)}{\Gamma(2H)\Gamma(1-2H)} &\mbox{when } H \in (0, \frac{1}{2}) \;, \\
	 \\
	(4H-1)\Big[1+\frac{\Gamma(3-4H)\Gamma(4H-1)}{\Gamma(2-2H)\Gamma(2H)}\Big] & \mbox{when } H \in [\frac{1}{2}, \frac{3}{4}) \;.
	\end{cases}\\
	\]
    \item[(ii)] For $H=\frac{3}{4}$, $\displaystyle\frac{\sqrt{T}}{\sqrt{\log(T)}}(\hat\theta_T-\theta)\xrightarrow{\mathcal{L}} N(0,4\pi^{-1}\theta).$ \\
    \item[(iii)] For $H \in (\frac{3}{4},1)$, $T^{2-2H}(\hat\theta_T-\theta) \xrightarrow{\mathcal{L}} \displaystyle\frac{-\theta^{2H-1}}{H\Gamma(2H)}R_1$, where $ R_1=I_2(\delta_{0,1}) $ is the Rosenblatt random variable and $\delta_{0,1}$ is the Dirac-type distribution   defined in \eqref{ddel.def}.\\
  \end{enumerate}
\end{thm}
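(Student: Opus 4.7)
The starting point is to rewrite $\hat\theta_T - \theta = -\frac{\sigma \int_0^T X_t\, dB^H_t}{\int_0^T X_t^2\, dt}$ and observe that, by the ergodic-type result in Lemma~\ref{xt2}, the denominator obeys $T^{-1}\int_0^T X_t^2\, dt \to c_\ast := \sigma^2 H \theta^{-2H}\Gamma(2H)$ almost surely. By Slutsky's theorem, each of the three statements then reduces to the corresponding asymptotic law, with the same normalization, for the numerator $-\sigma \int_0^T X_t\, dB^H_t$.

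The key algebraic step is to turn this numerator into a \emph{single} second-chaos integral. Using identity \eqref{t4-1-5}, taking expectations to identify $\sigma\ell(T)$, and the fact that, since $X_t = I_1(g_t)$ with $g_t(s) = \sigma e^{-\theta(t-s)}\mathbf{1}_{[0,t]}(s)$, one has $X_t^2 - \mathbb{E}[X_t^2] = I_2(g_t \otimes g_t)$, we obtain
\[
-\sigma \int_0^T X_t\, dB^H_t = I_2(F_T), \qquad F_T := -\theta \int_0^T g_t\otimes g_t\, dt - \tfrac{1}{2}\, g_T \otimes g_T \in \mathfrak{H}^{\odot 2}.
\]
A direct calculation shows that, up to an exponentially small boundary remainder, $F_T(s_1,s_2)$ is asymptotically $-\tfrac{\sigma^2}{2}e^{-\theta|s_1-s_2|}\mathbf{1}_{[0,T]^2}(s_1,s_2)$; this is the object to analyze in each Hurst range.

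For part (i), $H \in (0,3/4)$, and part (ii), $H = 3/4$, the plan is to apply the fourth moment theorem (Theorem~\ref{fm.theorem}). One first computes $\mathbb{E}[I_2(F_T)^2] = 2\|F_T\|_{\mathfrak{H}^{\otimes 2}}^2$: the integrand $e^{-\theta|u_1-u_2|-\theta|v_1-v_2|}$ is paired with the covariance structure of $B^H$ using \eqref{ip.ghalf} when $H>1/2$ and the Fourier representation \eqref{sfbm} when $H\le 1/2$, showing the norm is of order $T$ for $H<3/4$ and $T\log T$ at $H=3/4$, with explicit constants matching $c_\ast^2\theta\sigma_H^2$ and $4c_\ast^2\theta/\pi$ respectively. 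One then verifies the vanishing of the contraction $\|F_T\otimes_1 F_T\|_{\mathfrak{H}^{\otimes 2}}$ at strictly lower order, which by Theorem~\ref{fm.theorem} produces a Gaussian limit for the normalized numerator; Slutsky then gives the stated asymptotic variance for $\hat\theta_T-\theta$.

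For part (iii), $H\in(3/4,1)$, the scaling $T^{2-2H}$ is precisely the one under which $T^{1-2H}F_T$ has a non-trivial $L^2(\mathfrak{H}^{\otimes 2})$-limit. Using \eqref{ip.ghalf}, one shows that the diagonal concentration of $e^{-\theta|s_1-s_2|}$, integrated against the long-range kernel $|u-v|^{2H-2}$, makes $T^{1-2H}F_T$ behave like a multiple of the Dirac-type distribution $\delta_{0,1}$ of \eqref{ddel.def} (on the rescaled interval $[0,1]$, appealing to self-similarity). By \eqref{ip.rosenb} this produces $L^2$-convergence of $T^{1-2H}I_2(F_T)$ to $\kappa R_1$, and matching prefactors identifies $\kappa$ so that after dividing by $c_\ast$ one recovers $-\theta^{2H-1}R_1/(H\Gamma(2H))$.

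The main obstacle is the hands-on asymptotic evaluation of $\|F_T\|_{\mathfrak{H}^{\otimes 2}}^2$ and of $F_T\otimes_1 F_T$ across the three Hurst regimes. The double integrals of products like $e^{-\theta|u|}|u-v|^{2H-2}$ behave quite differently depending on whether $H<1/2$, $1/2\le H<3/4$, or $H\ge 3/4$: for $H<1/2$ the inner product on $\mathfrak{H}$ is not given by a pointwise kernel and one must pass through Fourier transforms; at $H=3/4$ a critical logarithmic divergence produces the $\sqrt{\log T}$ rate; and for $H>3/4$ the dominant contribution becomes long-range and non-Gaussian. Controlling all lower-order error terms (the $g_T\otimes g_T$ boundary piece, the exponential tails in $\int_0^T g_t\otimes g_t\,dt$, and the $X_T^2$ term which is asymptotically stationary) is the technical heart of the proof and is why the authors defer the detailed computations to the appendix.
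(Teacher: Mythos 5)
Your proposal is correct and follows essentially the same route as the paper: reduce to the second Wiener chaos element with kernel proportional to $e^{-\theta|s-t|}\mathbf{1}_{[0,T]^2}$ (your derivation via \eqref{t4-1-5} and $X_t^2-\mathbb{E}[X_t^2]=I_2(g_t\otimes g_t)$ in fact yields this kernel exactly, with the boundary terms cancelling, matching the paper's $-\frac{\sigma^2}{2}F_T$ with $F_T$ as in \eqref{e.ft1}), then apply the fourth moment theorem with the variance and contraction estimates for $H\le\frac34$ and the $L^2$-convergence to a multiple of $R_1$ via \eqref{ip.rosenb} for $H>\frac34$, finishing with Lemma \ref{xt2} and Slutsky. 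The only cosmetic deviation is that the paper computes the variance for $H<\frac12$ through the derivative formula \eqref{hinnerp} and reserves the Fourier representation \eqref{sfbm} for the contraction bound, while you propose Fourier for both; the heavy asymptotic computations you defer are exactly the content of the paper's appendix (Lemmas \ref{psi.phi4}--\ref{lem5.1}).
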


\begin{rem}
It is interesting to note that when $H \in (0,\frac{1}{2})$, by the fact $\lim_{z \to 0} z\Gamma(z) = 1$, we have
 \[
   \lim_{H \to \frac{1}{2}^-} \sigma_H^2 = 2
 \]
which is consistent with $\sigma_H^2 = 2$ if $H = \frac{1}{2}$. Moreover, we also see that $\lim_{H \to 0} \sigma_H^2 = 0$.
\end{rem}

\begin {proof} 

The case $H \in [\frac{1}{2}, \frac{3}{4})$ was proved in \cite{hn}. We shall use Malliavin calculus to prove the theorem for $H \in (0,\frac{1}{2})\cup [\frac{3}{4},1)$.\\  

\noindent  {\bf Step 1:} We use Theorem \ref{fm.theorem} to prove the central limit theorem when $H \in (0, \frac{1}{2})$. By \eqref{thetaT} and \eqref{xt.solution}, we can write our target quantity as
  \begin{equation}\label{e.thetat-theta} 
  \sqrt{T}(\hat\theta_T-\theta) = - \frac{\frac{\sigma^2}{\sqrt{T}} \int^T_0(\int^t_0 e^{-\theta(t-s)}dB^H_s)dB^H_t}{\int^T_0X_t^2dt / T} = \frac{-\frac{\sigma^2}{2\sqrt{T}} F_T}{\int^T_0 X_t^2 dt / T}\,,
  \end{equation}
 where
\begin{equation}
F_T = \int^T_0\int^T_0 e^{-\theta |t-s|} dB^H_s dB^H_t\,.
\label{e.ft1} 
\end{equation}
We introduce the function 
\[
f(s,t)=\frac{1}{\sqrt{T}}e^{-\theta|s-t|}\mathbf{1}_{[0,T]^2} \, .
\]
Then $\frac{1}{\sqrt{T}}F_T = I_2(f)$ is in the second Wiener chaos. Our main objective  is to use Theorem \ref{fm.theorem} to obtain the central limit theorem for the term $\frac{1}{\sqrt{T}}F_T$ and then  we   apply Lemma \ref{xt2} and Slutsky's theorem for \eqref{e.thetat-theta} to obtain the central limit theorem of $\hat\theta_T$. First of all, let us check the variance assumption in Theorem \ref{fm.theorem}.  By the isometry between the Hilbert space $\mathfrak{H}^{\otimes 2}$ and the second chaos $\mathcal{H}_2$, we have 
  \begin{equation*}
   \mathbb{E}\left(\frac{1}{T} F_T^2\right) = \frac{2}{T} \langle e^{-\theta|s_1-t_1|}, e^{-\theta|s_2-t_2|} \rangle_{\mathfrak{H}\otimes \mathfrak{H}}.
  \end{equation*}
To compute the above norm, we shall use the definition of the tensor product space where the norm in the Hilbert space $\mathfrak{H}$ is defined by (\ref{hinnerp}), namely,
  \begin{equation}
   \mathbb{E}\left(\frac{1}{T} F_T^2\right)  = \frac{2}{T} \int_{[0,T]^4} \frac{\partial e^{-\theta|s_1-t_1|}}{\partial t_1} \frac{\partial e^{-\theta|s_2-t_2|}}{\partial s_2}\frac{\partial R_H(s_1,s_2)}{\partial s_1}\frac{\partial R_H(t_1,t_2)}{\partial t_2}ds_1ds_2dt_1dt_2. \label{ET2}
  \end{equation}
By Equation \eqref{e.lemm5.1} in Lemma \ref{lem5.1}, we have
\begin{equation} \label{ft.v}
\lim_{T\to\infty} \mathbb{E}\left(\frac{1}{T} F_T^2\right)=4H^2 \theta^{1-4H} \Gamma(2H)^2 \Big((4H-1) + \frac{2\Gamma(2-4H)\Gamma(4H)}{\Gamma(2H)\Gamma(1-2H)}\Big) \,.
\end{equation}
Next, let us check  the second condition in Theorem \ref{fm.theorem}.  The first contraction of the kernel $f$ is
  \begin{equation}
	f \otimes_1 f := g(s,t) = \frac{1}{T} \langle e^{-|\cdot-s|}\mathbf{1}_{[0,T]}(\cdot), e^{-|\cdot-t|}\mathbf{1}_{[0,T]}(\cdot) \rangle_\mathfrak{H} \,. \label{e.def-g} 
  \end{equation}
We want to prove that the norm of the function $g(s,t)$ in the Hilbert space $\mathfrak{H}^{\otimes{2}}$ goes to 0 as $T \to \infty$.  Using the identity  (\ref{sfbm}),  we rewrite
  \begin{eqnarray*}
   g(s,t) &=& \frac{1}{Tc_H^2} \int_{\mathbb{R}} \mathcal{F} (e^{-|\cdot-s|}\mathbf{1}_{[0,T]}(\cdot))(\xi) \overline{\mathcal{F} (e^{-|\cdot-t|}\mathbf{1}_{[0,T]}(\cdot))(\xi)} |\xi|^{1-2H} d\xi\\
	      &=& \frac{4}{Tc_H^2} \int_{\mathbb{R}} \Big(\int_{\mathbb{R}} \frac{e^{-is\eta}}{1+\eta^2} \cdot \frac{1-e^{-iT(\xi-\eta)}}{i(\xi-\eta)}d\eta\Big)
		                           \Big(\int_{\mathbb{R}} \frac{e^{it\eta'}}{1+\eta'^2} \cdot \frac{1-e^{iT(\xi-\eta')}}{-i(\xi-\eta')}d \eta'\Big) |\xi|^{1-2H} d\xi .
  \end{eqnarray*}
Observe that $g(s,t)$ is the inverse Fourier transformation of the following  function  
$$h(s, t)= \frac{4}{Tc_H^2}\int_{\mathbb{R}} \Big(\frac{1}{1+s^2} \cdot \frac{1-e^{-iT(\xi+s)}}{i(\xi+s)}\Big)\Big(\frac{1}{1+t^2} \cdot \frac{1- e^{iT(\xi-t)}}{-i(\xi-t)}\Big) |\xi|^{1-2H} d\xi.$$  
By the Parseval's identity, the norm of the function $g$ in the space $\mathfrak{H}^{\otimes 2}$ can be computed as
  \begin{eqnarray}
  ||g||_{\mathfrak{H}^{\otimes 2}}^2 & = & \frac{1}{c_H^2}\int_{\mathbb{R}^2}|h(\eta,\eta')|^2 |\eta|^{1-2H} |\eta'|^{1-2H} d\eta d\eta' \nonumber \\
                                    & \leq & \frac{C}{T^2} \int_{\mathbb{R}^2} \frac{|\eta|^{1-2H}}{(1+\eta^2)^2}\frac{|\eta'|^{1-2H}}{(1+\eta'^2)^2} \Big(\int_{\mathbb{R}} \frac{|e^{-iT(\xi-\eta)}-1|}{|\xi-\eta|} \frac{|e^{-iT(\xi-\eta')}-1|}{|\xi-\eta'|} |\xi|^{1-2H} d\xi \Big)^2 d \eta d \eta'\,. \nonumber\\\label{g.ineq}
  \end{eqnarray} 
Now our task is to show the right-hand side of the above inequality goes to 0 as $T \to \infty$. Split $\mathbb{R}$ into $\mathbb{R} = \mathscr{I}_1 \cup \mathscr{I}_2$ where $\mathscr{I}_1 = \{\xi: |\xi|<|\eta|+|\eta'|\}$ and $\mathscr{I}_2 = \{\xi: |\xi|\geq|\eta|+|\eta'|\}$. Put $\mathscr{I}_j \cap \mathbb{R}_+ = \mathscr{I}^+_j$ for $j=1,2$. Denote the functions
\[
f(\eta, \eta') = \frac{\eta^{1-2H}}{1+\eta^4}\frac{\eta'^{1-2H}}{1+\eta'^4}, \quad \eta, \eta' \in \mathbb{R_+} \,,
\]
and 
\[
  f_{\alpha_j}(\xi, \eta, \eta') = |\xi - \eta|^{\alpha_j-1} |\xi-\eta'|^{\alpha_j-1} |\xi|^{1-2H} , \quad \xi, \eta, \eta' \in \mathbb{R}
\]
for $j=1,2$. We shall use the inequality $|e^{-iTx}-1| \leq C_{\alpha} T^{\alpha} |x|^{\alpha}$ for any $0 < \alpha < 1$ and some constant $C_{\alpha}>2/\alpha$ to bound the corresponding factors in \eqref{g.ineq}. The choices of $\alpha$ are different on $\mathscr{I}_j$. Namely, we choose $ 1/4<\alpha_1 < 1/2$ on $\mathscr{I}_1$ and $H/2<\alpha_2<H$ on $\mathscr{I}_2$. In this way, we obtain
  \begin{eqnarray}
   ||g||_{\mathfrak{H}^{\otimes 2}}^2 & \leq & CT^{-2} \int_{\mathbb{R}^2} f(|\eta|,|\eta'|)  \Big(\sum_{j=1}^2 \int_{\mathscr{I}_j} T^{2\alpha_j}f_{\alpha_j}(\xi,\eta,\eta')d\xi \Big)^2 d\eta d\eta' \nonumber \\
   & \leq & CT^{-2} \int_{\mathbb{R}^2} f(|\eta|,|\eta'|) \Big(\sum_{j=1}^2 \int_{\mathscr{I}_j}T^{2\alpha_j}f_{\alpha_j}(|\xi|,|\eta|,|\eta'|)d\xi\Big)^2 d\eta d\eta' \nonumber \\
   & \leq & CT^{-2} \int_{\mathbb{R}^2_+} f(\eta,\eta')  \Big(\sum_{j=1}^2 \int_{\mathscr{I}^+_j}T^{2\alpha_j}f_{\alpha_j}(\xi,\eta,\eta')d\xi \Big)^2 d\eta d\eta' \label{gnorm}\,.
  \end{eqnarray}
By the symmetry of $\eta, \eta'$ in $f_{\alpha_j}(\xi,\eta,\eta')$, we can assume that $\eta \geq \eta'$.  Then by Lemma \ref{fxi}, for any $\epsilon$ satisfying $0 < \epsilon < \alpha_1 \wedge (2\alpha_1 - 1/2)$, there exist some positive constants $K_1, K_2$ depending on $\alpha_1, \epsilon$ such that
  \begin{equation}
   \int_{\mathscr{I}^+_1}f_{\alpha_1}(\xi,\eta,\eta') d\xi \leq K_1(\eta-\eta')^{-1+2\alpha_1 -\epsilon} \eta^{1-2H+\epsilon} + K_2 (\eta-\eta')^{-1+2\alpha_1} \eta^{1-2H} \label{f.alpha1}\,.
   \end{equation}
Similarly there exist some positive $K_3, K_4$, depending on $\alpha_2$ and $H$, such that
  \begin{equation}
   \int_{\mathscr{I}^+_2}f_{\alpha_2}(\xi,\eta,\eta') d\xi \leq K_3 (\eta')^{2\alpha_2 -2H} + K_4 \eta^{1-2H} (\eta')^{-1+2\alpha_2}\label{f.alpha2} \,.
  \end{equation} 
Denote
\[
 H_{\lambda_1,\lambda_2,\lambda_3} (\eta,\eta') = f(\eta,\eta')  (\eta')^{2\lambda_1} \eta^{2\lambda_2} (\eta-\eta')^{2\lambda_3} , \quad \eta \geq \eta' > 0 \,,
\]
where $\lambda_1 \in [-1+2\alpha_2,0], \quad \lambda_2 \in [0, 1-2H+\epsilon], \quad \lambda_3 \in [-1+2\alpha_1-\epsilon,0]$.  We substitute \eqref{f.alpha1}, \eqref{f.alpha2} into \eqref{gnorm} and apply the elementary inequality $(\sum_{i=1}^4 a_i)^2 \leq 4 \sum_{i=1}^4 a_i^2 $ to obtain 
\[
  ||g||_{\mathfrak{H}^{\otimes 2}}^2 \leq 4CK^2 T^{-2+4(\alpha_1 \vee \alpha_2)} \int_{\mathbb{R}_+^2} \sum_{(\lambda_1,\lambda_2,\lambda_3) \in \Pi} H_{\lambda_1,\lambda_2,\lambda_3} (\eta,\eta') d\eta d\eta' \,,
\]
where $K=\max\{K_i,\, i=1,...,4\}$ and $\Pi=\{(0,1-2H+\epsilon,-1+2\alpha_1-\epsilon),(0,1-2H,-1+2\alpha_1),(2\alpha_2-2H,0,0),(-1+2\alpha_2, 1-2H,0)\}$.
The functions $H_{\lambda_1,\lambda_2,\lambda_3} (\eta,\eta')$ are integrable on $\mathbb{R}_+^2$. Thus,
\[
\lim_{T \to \infty}||g||_{\mathfrak{H}^{\otimes 2}}^2 = 0 \,.
\]
By Theorem \ref{fm.theorem}, as $T$ goes to infinity, the term $\frac{1}{\sqrt{T}}F_T$ converges in distribution to a  centered Gaussian random variable with variance given by \eqref{ft.v}.  Applying Slutsky's theorem and Lemma \ref{xt2} to the equation \eqref{e.thetat-theta}, we finish the proof of the theorem when $H \in (0, \frac{1}{2})$. \\

\noindent  {\bf Step 2:} The case $H=\frac{3}{4}$ can be dealt with in a similar way as in the proof of Theorem 3.4 in \cite{hn}.  But now we need to use  Lemma \ref{lem5.1}.\\

\noindent   {\bf Step 3:} In this step we will prove the theorem when $H \in (\frac{3}{4}, 1)$. Recall that the term $F_T$ is given by \eqref{e.ft1}. By \eqref{thetaT} and \eqref{xt.solution}, we write
\[
  T^{2-2H}(\hat\theta_T-\theta)=\frac{-\frac{\sigma^2}{2} T^{1-2H} F_T}{\int^T_0 X_t^2 dt / T}\,.
\]
Denote
\begin{equation} \label{e.ft2}
\widetilde F_T = T^{2H} \int_{[0,1]^2} e^{-\theta T |t-s|} dB^H_s dB^H_t \,.
\end{equation}
By the self-similarity property of the fBm, the process $\{F_T, T>0\}$ has the same law as $\{\widetilde F_T, T>0\}$. To prove part (iii) of the theorem, we need to show 
 $T^{1-2H} F_T \xrightarrow{\mathcal{L}} 2\theta^{-1}R_1$. It suffices to prove
 \begin{equation}
 \displaystyle\lim_{T \to \infty}\mathbb{E}(T^{1-2H}\widetilde F_T-2\theta^{-1}R_1)^2=0\,.
 \label{e.thm5.6-toprove}
 \end{equation} 
By Equations \eqref{e.lemm5.1-3} and  \eqref{e.lemm5.1-4}, we see immediately that
\[
  \lim_{T \to \infty}\mathbb{E}\left(T^{2-4H} \widetilde F_T^2\right) = \lim_{T \to \infty}\mathbb{E}\left(T^{2-4H} F_T^2\right) = \frac{16\alpha_H^2 \theta^{-2}}{(4H-2)(4H-3)}\,,
\]
\[
  \lim_{T \to \infty} \mathbb{E}\left[2\theta^{-1} R_1 (T^{1-2H} \widetilde F_T)\right]=\frac{16\alpha_H^2 \theta^{-2}}{(4H-2)(4H-3)} \,,
\]
where $\alpha_H=H(2H-1)$. On the other hand, we have 
\begin{eqnarray*}
\mathbb{E}(2\theta^{-1}R_1)^2
&=&8\theta^{-2}\alpha_H^2 \int_{[0,1]^4} \delta_{0,1}(s-t) \delta_{0,1}(s'-t') |s-s'|^{2H-2}|t-t'|^{2H-2}dsdtds'dt'\\
&=&8\theta^{-2}\alpha_H^2 \int_{[0,1]^2} |t-s|^{4H-4} dsdt = \frac{16\theta^{-2}\alpha_H^2}{(4H-3)(4H-2)} \;.
\end{eqnarray*}
This shows \eqref{e.thm5.6-toprove} and hence   completes the proof
of the theorem.
\end {proof}

\begin{thm} \label{tthetaclt}
Define an ergodic-type  estimator for the drift parameter by 
\begin{equation}
\widetilde{\theta}_T = \Big ( \frac{1}{\sigma^2 H\Gamma(2H)T} \int^T_0 X^2_t dt \Big)^{-\frac{1}{2H}}\,.
\end{equation}
Then $\widetilde{\theta} _T\rightarrow \theta$ almost surely as $T \to \infty$.  Furthermore,  we have 
the following central limit theorem ($H\le 3/4$) and noncentral limit theorem  ($H> 3/4$).
\begin{enumerate}
 \item [(1)] When  $H \in (0,\frac{3}{4})$, we have  $\sqrt{T} (\widetilde{\theta}_T - \theta) \xrightarrow{\mathcal{L}} N(0,\frac{\theta}{(2H)^2} \sigma_H^2)$ as $T \to \infty$, where $\sigma_H^2$ is  defined in  Theorem \ref{thetaclt}.
 \item [(2)] When  $H=\frac{3}{4}$,  we have 
 $\frac{\sqrt{T}}{\log(T)} (\widetilde{\theta}_T - \theta) \xrightarrow{\mathcal{L}} N(0,\frac{16\theta}{9\pi}) $ as $T \to \infty$.
 \item [(3)] When  $H \in (\frac{3}{4},1)$,  we have $T^{2-2H} (\widetilde{\theta}_T - \theta) \xrightarrow{\mathcal{L}} \frac{-\theta^{2H-1}}{H \Gamma(2H+1)} R_1$, where $R_1=I_2(\delta_{0,1})$ is the Rosenblatt random variable, and $\de_{0,1}$ is the Dirac-type function defined in \eqref{ddel.def}.
\end{enumerate}
\end{thm}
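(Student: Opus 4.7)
The plan is to reduce everything to the quantity $F_T$ studied in Theorem~\ref{thetaclt}, via the key identity \eqref{t4-1-5} that expresses $\int_0^T X_t^2\,dt$ in terms of $F_T$, $X_T^2$ and $\ell(T)$, and then apply the delta method to the function $g(x)=x^{-1/(2H)}$ at $x=\theta^{-2H}$.

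First, rewrite \eqref{t4-1-5} as
\[
  V_T:=\frac{1}{\sigma^2 H\Gamma(2H)T}\int_0^T X_t^2\,dt
      =\frac{F_T}{2H\Gamma(2H)\theta T}-\frac{X_T^2}{2\sigma^2 H\Gamma(2H)\theta T}+\frac{\ell(T)}{\sigma H\Gamma(2H)\theta T}.
\]
By the explicit expression for $\ell(T)$ derived in the previous proof, $\ell(T)/T\to H\sigma\theta^{1-2H}\Gamma(2H)$ with remainder $O(1/T)$, hence the last term converges to $\theta^{-2H}$ with an $O(1/T)$ error. By Lemma~\ref{yt.lim}, $X_T^2/T\to 0$ a.s., and by the CLT/noncentral limit theorem  behavior of $F_T$, the middle term vanishes a.s. This gives $V_T\to\theta^{-2H}$, so $\widetilde\theta_T=g(V_T)\to\theta$ a.s.

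For the limit distributions I would perform a first-order Taylor expansion
\[
  \widetilde\theta_T-\theta
  = g'(\theta^{-2H})\,(V_T-\theta^{-2H})+o(V_T-\theta^{-2H})
  = -\frac{\theta^{1+2H}}{2H}(V_T-\theta^{-2H})+o(V_T-\theta^{-2H}).
\]
Plugging in the decomposition of $V_T-\theta^{-2H}$, the dominant contribution comes from $F_T/(2H\Gamma(2H)\theta T)$, since $X_T^2/T$ and the $\ell(T)$--error are of smaller order at the relevant scale. Thus, asymptotically,
\[
  \widetilde\theta_T-\theta\;\sim\;-\frac{\theta^{2H}}{4H^2\Gamma(2H)}\cdot\frac{F_T}{T}
  \;=\;\frac{1}{2H}\bigl(\hat\theta_T-\theta\bigr)+\text{negligible},
\]
where the last identification uses the expansion of $\hat\theta_T-\theta$ obtained in \eqref{e.thetat-theta} together with Lemma~\ref{xt2}. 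Hence the three cases follow directly from the three cases of Theorem~\ref{thetaclt}, multiplied by the scalar factor $1/(2H)$: the asymptotic variance is divided by $(2H)^2$ in the central cases $H\in(0,3/4]$, producing $\frac{\theta}{(2H)^2}\sigma_H^2$ and (at $H=3/4$) $\frac{1}{(2H)^2}\cdot\frac{4\theta}{\pi}=\frac{16\theta}{9\pi}$; and in the noncentral case $H\in(3/4,1)$ the Rosenblatt limit $-\theta^{2H-1}R_1/(H\Gamma(2H))$ of $\hat\theta_T$ is rescaled by $1/(2H)$ to give $-\theta^{2H-1}R_1/(H\Gamma(2H+1))$, using $2H\,\Gamma(2H)=\Gamma(2H+1)$.

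The main technical obstacle is the justification of the remainder estimate in the Taylor expansion and the negligibility of the $X_T^2/T$ and $\ell(T)$--error terms at the sharp scales $\sqrt{T}$, $\sqrt{T}/\sqrt{\log T}$, and $T^{2-2H}$. For the first two this requires $\mathbb{E}[X_T^2]$ to remain bounded (which follows from Lemma~\ref{xt2}) so that $X_T^2/\sqrt{T}\to 0$ in probability, and the deterministic $O(1/\sqrt{T})$ bound on the $\ell(T)$--term is elementary. In the noncentral regime $H>3/4$ one must check that $T^{2-2H}X_T^2/T=X_T^2/T^{2H-1}\to 0$; since $\mathbb{E} X_T^2$ stays bounded, this is immediate. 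With these estimates in place, Slutsky's theorem combined with Theorem~\ref{thetaclt} completes the proof.
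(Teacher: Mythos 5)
Your proposal is correct and follows essentially the same route as the paper: both use the identity \eqref{t4-1-5} to express $\int_0^T X_t^2\,dt$ in terms of $\hat\theta_T$ (equivalently $F_T$), $X_T^2$ and $\ell(T)$, then apply a first-order Taylor expansion of the power function to reduce all three regimes to Theorem \ref{thetaclt} with the extra factor $1/(2H)$, exactly as in the paper's computation of $\sqrt{T}\,\theta^{1-\frac{1}{2H}}(\hat\theta_T^{\frac{1}{2H}}-\theta^{\frac{1}{2H}})$. One minor point: the almost sure convergence $\widetilde\theta_T\to\theta$ follows most directly from Lemma \ref{xt2} (the convergence in distribution of $F_T/\sqrt{T}$ only gives $F_T/T\to 0$ in probability, not a.s., so your phrasing there should be adjusted or replaced by the ergodic-theorem argument), but this does not affect the rest of the argument.
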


\begin{proof}
The paper \cite{hn}   provides a  proof of  the theorem 
when   $H \in (\frac{1}{2}, \frac{3}{4})$.  Here we present a  proof valid for all $H \in (0,1)$. 
By Lemma \ref{xt2}, it is easy to see $\widetilde{\theta}_T \rightarrow \theta$ almost surely as $T \to \infty$.

  We prove the central limit theorem when $H \in (0, \frac{3}{4})$.  For $H \in [\frac{3}{4}, 1)$, the proof is similar.  By \eqref{thetaT} and  \eqref{t4-1-5},  we can derive an expression for $\int_0^T X_t^2 dt$, and then
express $\widetilde{\theta}_T $ as a function of $\hat\theta_T$. In this way, we obtain
 \begin{eqnarray*}
  \sqrt{T}(\widetilde{\theta}_T-\theta) & = & \sqrt{T} \Bigg[\Bigg(\frac{\sigma^2 H \Gamma(2H) \hat\theta_T}{-\frac{X_T^2}{2T}+\sigma T^{-1} \ell(T)}\Bigg)^{\frac{1}{2H}}-\theta\Bigg]\,. 
 \end{eqnarray*}
 By  Lemma \ref{yt.lim} and (\ref{limit_mT}) we have 
  \begin{eqnarray*}
  \sqrt{T}(\widetilde{\theta}_T-\theta) 
  & = & \sqrt{T} \Big[\Big(\frac{1}{\theta^{1-2H}+o(T^{-1/2})}\Big)^{\frac{1}{2H}} \hat\theta_T^{\frac{1}{2H}} - \theta \Big] \\
  & = & \sqrt{T} \theta^{1-\frac{1}{2H}}(\hat\theta_T^{\frac{1}{2H}}-\theta^{\frac{1}{2H}})+\sqrt{T} \; o(T^{-1/2})\hat\theta_T^{\frac{1}{2H}} \, .
 \end{eqnarray*} 
Meanwhile, we can write
 \begin{equation*}
   \sqrt{T}\Big[\hat\theta_T^{\frac{1}{2H}} - \theta^{\frac{1}{2H}}\Big] = \sqrt{T} \Big[\frac{1}{2H}\theta^{\frac{1}{2H}-1}(\hat\theta_T - \theta) + \frac{1-2H}{8H^2}(\hat\theta_T - \theta)^2 (\theta_T^*)^{\frac{1}{2H}-2}\Big]
 \end{equation*}
 for some $\theta_T^*$ between $\theta$ and $\hat\theta_T$.  
Now the theorem follows from  Theorem \ref{thetaclt}.
\end{proof}

\begin{rem}
  By the   property for gamma function:  $\Gamma(1-z)\Gamma(z)=\frac{\pi}{\sin(\pi z)}$ for $z \notin \mathbb{Z}$, we see  $\lim_{H \to 0}\frac{\theta}{(2H)^2} \sigma_H^2 = \frac{\pi^2}{2} \theta$.
\end{rem}

Now we have obtained the asymptotic law of the least square estimator (LSE) $\hat\theta_T$ and the ergodic type estimator (ETE) $\widetilde{\theta}_T$. Next,  we compare these two estimators with the maximum likelihood estimator  by computing their asymptotic variance.  For convenience, we  assume $\theta=1$. As it  can be seen from Figure 1, the asymptotic variance of LSE increases as $H$   increases. When $H \in (0,\frac{1}{2})$, the asymptotic variance of LSE is less than that of MLE, where the converse is true for $H \in (\frac{1}{2}, \frac{3}{4})$. The asymptotic variance of ETE decreases on $H \in (0,\frac{1}{2})$ and then increases on $H \in (\frac{1}{2}, \frac{3}{4})$; however, it does not blow up as fast as LSE does when $H$ is close to $\frac{3}{4}$. If we justify these three estimators only based on asymptotic variance, LSE performs best when $H \in (0,\frac{1}{2})$ and MLE performs best when $H \in (\frac{1}{2}, \frac{3}{4})$.  At $H=\frac{1}{2}$, these three estimators have the same asymptotic variance.  

\begin{figure}[h]
\centering
\vspace*{-3cm}
\includegraphics[width=0.75\textwidth,height=14cm]{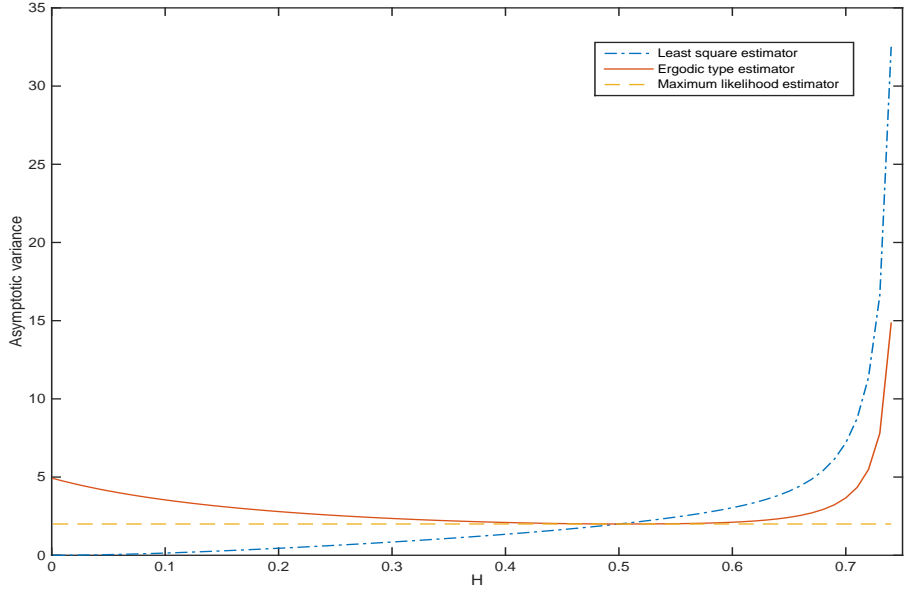}
\vspace*{-2.5cm}
\caption{Asymptotic Variance of the Three Estimators}
\end{figure}
	
The estimators $\hat \theta_T$ and $\widetilde{\theta}_T$ are based on continuous time data. In practice  the process can only be observed at discrete time instants. This motivates us to construct an estimator based on discrete observations. We assume that the fractional Ornstein-Uhlenbeck process $X$ given by \eqref{xt.solution} can be observed at discrete time points $\{t_k = kh, k=0, 1, \dots, n\}$. We shall use $nh$ instead of $T$ for the time period of the observation.  Here $h$ represents the observation frequency and it depends on $n$. We will only consider the high frequency observation case, namely, we shall assume that $h\rightarrow 0$ as $n\rightarrow \infty$. We shall use ergodic type estimator  since it can be expressed as a  pathwise Riemann integral with respect to time.  
The following Theorem shows its asymptotic consistency and some results on its asymptotic law.

\begin{thm} \label{disctheta}
	Assume the fractional Ornstein-Uhlenbeck process $X$ given by \eqref{xt.solution} is observed at discrete time points $\{t_k = kh, k=0, 1, ..., n\}$. Suppose that  $h$ depends on $n$ and as $n \to \infty$, $h$ goes to 0 and $nh$ converges to $\infty$. In addition, we make the following assumptions   on $h$ and $n$:  
\begin{enumerate}
\item [(1)] When $H \in (0,\frac{3}{4})$, $nh^p \rightarrow 0$ for some $p \in (1,\frac{3+2H}{1+2H} \wedge (1+2H))$ as $n \rightarrow \infty$.
\item [(2)] When $H=\frac{3}{4}$, $\frac{nh^p}{\log(nh)} \rightarrow 0$ for some $p \in (1, \frac 95)$ as $n \rightarrow \infty$.
\item [(3)] When $H \in (\frac{3}{4},1)$, $nh^p \rightarrow 0$ for some $p \in (1, \frac{3-H}{2-H})$ as $n\to \infty$.
\end{enumerate}
Set 
	\begin{equation}
	    \bar\theta_n=\left(\frac{1}{n\sigma^2H\Gamma(2H)}\sum_{k=1}^{n}X_{kh}^2\right)^{-\frac{1}{2H}} \,.
    \end{equation}
Then $\bar\theta_n$ converges to $\theta$ almost surely as $n \to \infty$. Moreover, as $n$ tends to infinity, we have the following central and noncentral limit theorems. 
\begin{enumerate}
\item [(1)]  When  $H \in (0,\frac{3}{4})$, $\sqrt{nh} (\bar\theta_n-\theta) \xrightarrow{\mathcal{L}} N(0,\frac{\theta}{(2H)^2} \sigma_H^2)$, where $\sigma_H^2$ is given in Theorem \ref{thetaclt}. 
\item [(2)]  When  $H=\frac{3}{4}$,  $\frac{\sqrt{nh}}{\log(nh)} (\bar\theta_n-\theta) \xrightarrow{\mathcal{L}} N(0,\frac{16\theta}{9\pi})$.  
\item [(3)] When $H \in (\frac{3}{4},1)$, $(nh)^{2-2H} (\bar\theta_n-\theta) \xrightarrow{\mathcal{L}} \frac{-\theta^{2H-1}}{H \Gamma(2H+1)} R_1$, where $R_1=I_2(\delta_{0,1})$ is the Rosenblatt random variable  and $\de_{0,1}$ is the Dirac-type function defined in \eqref{ddel.def}.
\end{enumerate}

\end{thm}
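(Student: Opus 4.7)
The plan is to reduce $\bar\theta_n$ to the continuous-time estimator $\widetilde\theta_T$ with $T = nh$, for which Theorem \ref{tthetaclt} already delivers the three limit theorems, and then show that the discretization error is negligible. Set
\[
U_n := \frac{1}{n\sigma^2 H\Gamma(2H)}\sum_{k=1}^n X_{kh}^2, \qquad V_T := \frac{1}{\sigma^2 H \Gamma(2H)\, T}\int_0^T X_t^2\, dt,
\]
so that $\bar\theta_n = U_n^{-1/(2H)}$ and $\widetilde\theta_T = V_T^{-1/(2H)}$. By Lemma \ref{xt2}, both $U_n$ and $V_T$ converge a.s.\ to $\theta^{-2H}$. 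A first-order Taylor expansion of $x \mapsto x^{-1/(2H)}$ around $\theta^{-2H}$ yields
\[
\bar\theta_n - \widetilde\theta_T = -\frac{1}{2H}\,\zeta_n^{-1/(2H)-1}\,(U_n - V_T)
\]
for some random $\zeta_n$ between $U_n$ and $V_T$, with $\zeta_n \to \theta^{-2H}$ a.s. By Slutsky's theorem the proof then reduces to showing that $U_n - V_T$ decays strictly faster than the normalizing factor in each regime: $(nh)^{-1/2}$ in case (1), $(nh)^{-1/2}\log(nh)$ in case (2), and $(nh)^{2H-2}$ in case (3).

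The discretization error is analyzed via
\[
U_n - V_T = \frac{-1}{\sigma^2 H\Gamma(2H)\, T}\sum_{k=1}^n \int_{(k-1)h}^{kh}\bigl[(X_t - X_{kh})^2 + 2X_{kh}(X_t - X_{kh})\bigr]\,dt.
\]
For the squared-increment piece, $\mathbb{E}|X_t - X_{kh}|^2 \leq C|t-kh|^{2H}$ (a direct consequence of \eqref{xt.solution} together with \eqref{1eq1}) gives after integration and summation an $L^1$ bound of order $h^{2H}$. For the cross term, the mean is computed from the stationary covariance of $X$ via the pathwise representation \eqref{t4-1-1} and is again of order $h^{2H}$, while the centered fluctuations are controlled by an $L^2$ estimate based on the Volterra kernel representation of $X_t$, in the spirit of Step 1 of the proof of Theorem \ref{thetaclt}. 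The almost sure convergence $\bar\theta_n \to \theta$ follows from the a.s.\ convergence of $\widetilde\theta_T$ together with the fact that $U_n - V_T \to 0$ a.s., which in turn is a consequence of the $(H-\varepsilon)$--H\"older continuity of $X$ on $[0,T]$.

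The conditions on $nh^p$ (or $nh^p/\log(nh)$ in case (2)) with the stated ranges of $p$ are precisely tuned so that the $L^2$ bound on $U_n - V_T$, multiplied by the relevant normalizing factor $\sqrt{T}$, $\sqrt{T}/\log T$, or $T^{2-2H}$, tends to zero. In case (1) the two competing constraints $p < 1 + 2H$ and $p < (3+2H)/(1+2H)$ reflect that the deterministic bias and the stochastic fluctuations of the cross term dominate on opposite sides of $H = 1/2$, the two bounds meeting exactly at $H = 1/2$. In case (3) the slower noncentral rate $(nh)^{2H-2}$ allows the weaker constraint $p < (3-H)/(2-H)$.

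The main obstacle is the sharp $L^2$ analysis of the centered cross term $\sum_k \int_{(k-1)h}^{kh} X_{kh}(X_t - X_{kh})\,dt$: its fluctuations involve long-range dependent Gaussian variables whose covariance kernel must be carefully integrated in the regime $H \in [3/4,1)$, and this is where the different admissible ranges of $p$ across the three regimes originate.
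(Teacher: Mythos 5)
Your overall reduction is the same as the paper's: write $T=nh$, compare $\bar\theta_n$ with $\widetilde\theta_T$ via a mean-value/Taylor expansion of $x\mapsto x^{-1/(2H)}$, and show that the discretization error, multiplied by the normalizing rate $f(T)$, is negligible, so that Theorem \ref{tthetaclt} carries over. The gap is in the one step you yourself flag as ``the main obstacle'': the $L^2$ estimate for the centered cross term $\frac{1}{T}\sum_k\int_{(k-1)h}^{kh}X_{kh}(X_t-X_{kh})\,dt$ is never established, only asserted to be ``controlled by an $L^2$ estimate based on the Volterra kernel representation.'' In this decomposition that estimate \emph{is} the proof of the rate: for $H\in[\frac34,1)$ the summands are products of jointly Gaussian variables with correlations decaying like $|j|^{2H-2}$, the second-moment computation involves genuinely divergent sums, and without carrying it out you cannot verify that the resulting exponent conditions reproduce the stated admissible ranges of $p$ (nor, in case (2), the logarithmic correction). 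Your claim that the conditions are ``precisely tuned'' to this bound is therefore unsupported, and your explanation of the two constraints in case (1) as a bias-versus-fluctuation dichotomy of the cross term does not match where they actually come from.

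The paper sidesteps this entire difficulty. It never splits off a cross term or computes covariances of products; instead it bounds $Z_n-\psi_n$ pathwise using Lemma \ref{hd.xt}, $|X_t-X_s|\le V_1|t-s|^{H-\epsilon}+V_2|t-s|$ with $\|V_1\|_{L^r}\lesssim T^{\epsilon}$ and $\|V_2\|_{L^r}\lesssim T^{H}$, and applies Minkowski's inequality in $L^q$ for large $q$ to get
\[
\mathbb{E}\left|Z_n-\psi_n\right|^q\le C\left((nh)^{q\epsilon}h^{q(H-\epsilon)}+(nh)^{qH}h^{q}\right),
\]
followed by Markov's inequality and Borel--Cantelli, which yields $f(nh)|Z_n-\psi_n|\to0$ \emph{almost surely} (used for both the consistency and the limit laws). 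The two constraints $p<1+2H$ and $p<\frac{3+2H}{1+2H}$ in case (1) arise as the limiting exponents $\beta_1,\beta_2$ of the two terms above (the fBm-increment term and the Lipschitz drift term with its $T^{H}$ prefactor) as $q\to\infty$ and $\epsilon,\lambda\to0$ --- they meet at $H=\frac12$ for that reason, not the one you give. If you want to complete your route, you must either supply the $L^2$ covariance computation for the cross term in the regime $H\ge\frac34$ and check it against the stated ranges of $p$, or replace that step by the paper's pathwise $L^q$ argument.
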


Before we prove Theorem \ref{disctheta}, we state and prove an auxillary result in the following lemma about the regularity of sample paths of the fractional Ornstein-Uhlenbeck process $X$.   
\begin{lemma} \label{hd.xt}
Let $X_t$ be given by \eqref{xt.solution}. Then for every interval $[0,T]$ and any $0 < \epsilon < H$,
  \begin{equation}	
	|X_t - X_s| \leq V_1 |t-s|^{H-\epsilon} +  V_2 |t-s| \quad {\rm a.s.} ,
  \end{equation}
where the random variables  $V_i$ are defined as follows: $V_1 = \sigma \eta_T$  where $\eta_T$ is given by \eqref{hd.fbm} with $\alpha=H-\epsilon$, $V_2 =  2 \sigma \theta \sup_{u \in [0,T]} |B_u^H|$.
\end{lemma}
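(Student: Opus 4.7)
The plan is to start from the representation of $X_t$ obtained via integration by parts in the proof of the drift consistency theorem, namely equation \eqref{t4-1-1}:
\[
X_t = \sigma \Bigl(B_t^H - \theta \int_0^t B_u^H e^{-\theta(t-u)}\, du\Bigr).
\]
Subtracting the corresponding expression at time $s$ for $0 \le s < t \le T$ yields
\[
X_t - X_s = \sigma(B_t^H - B_s^H) - \sigma\theta\Bigl(\int_0^t B_u^H e^{-\theta(t-u)}\, du - \int_0^s B_u^H e^{-\theta(s-u)}\, du\Bigr),
\]
and the goal is to bound each of the two summands, matching them to $V_1|t-s|^{H-\epsilon}$ and $V_2|t-s|$ respectively.

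The first summand is immediate from the definition of the H\"older coefficient $\eta_T$ in \eqref{hd.fbm}: almost surely,
\[
|\sigma(B_t^H - B_s^H)| \le \sigma \eta_T |t-s|^{H-\epsilon} = V_1 |t-s|^{H-\epsilon}.
\]

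For the second summand I would split the difference of the two integrals into the natural two pieces:
\[
\int_0^t B_u^H e^{-\theta(t-u)}\, du - \int_0^s B_u^H e^{-\theta(s-u)}\, du = \int_s^t B_u^H e^{-\theta(t-u)}\, du + \int_0^s B_u^H \bigl(e^{-\theta(t-u)} - e^{-\theta(s-u)}\bigr)\, du.
\]
Setting $M := \sup_{u\in[0,T]} |B_u^H|$, the first piece is bounded by $M(t-s)$ since $e^{-\theta(t-u)} \le 1$. For the second piece, factor $e^{-\theta(t-u)} - e^{-\theta(s-u)} = e^{-\theta(s-u)}(e^{-\theta(t-s)} - 1)$ and use $|1-e^{-x}| \le x$ for $x \ge 0$ together with $\int_0^s e^{-\theta(s-u)}\, du \le 1/\theta$, giving a bound of $M \cdot \theta(t-s) \cdot (1/\theta) = M(t-s)$. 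Adding the two pieces and multiplying by $\sigma\theta$ produces
\[
\sigma\theta \cdot 2M (t-s) = V_2 (t-s),
\]
which is the desired second term.

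The two bounds are each established pathwise on a set of full probability (the measure-one set where $B^H$ is $(H-\epsilon)$-H\"older on $[0,T]$ and bounded on $[0,T]$, both of which follow from Kolmogorov's criterion and continuity). There is no real obstacle here; the only thing to be careful about is the factorization of the exponential difference and the uniform bound $\int_0^s e^{-\theta(s-u)}\,du \le 1/\theta$ so that the constant $2$ in $V_2 = 2\sigma\theta M$ is correctly obtained.
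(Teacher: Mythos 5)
Your proof is correct and follows essentially the same route as the paper: the same integration-by-parts representation \eqref{t4-1-1}, the same split of the convolution term into the integrals over $[s,t]$ and $[0,s]$, and the same elementary bounds ($e^{-\theta(t-u)}\le 1$, $1-e^{-\theta(t-s)}\le\theta(t-s)$, $\int_0^s e^{-\theta(s-u)}du\le 1/\theta$) yielding the constant $2$ in $V_2$. Nothing further is needed.
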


\begin{proof}
Consider the process $Q_t = \sigma \theta \int_0^t B_v^H e^{-\theta (t-v)} dv$. Using \eqref{t4-1-1}, for any $s,t \in [0,T]$ and $s<t$, we have
\[
  \left|X_t - X_s\right| = \left|\sigma (B_t^H - B_s^H) -  (Q_t - Q_s) \right| \leq \sigma \left|B_t^H - B_s^H\right| + \left|Q_t - Q_s\right|.
\]
Note that
\begin{eqnarray*}
  |Q_t - Q_s| & \leq & \sigma \theta \left|\int_s^t B_v^H e^{-\theta(t-v)} dv\right| + \sigma \theta \left|\int_0^s B_v^H (e^{-\theta(t-v)} - e^{-\theta(s-v)}) dv\right| \\
  & \leq & \sigma \theta \sup_{v \in [s,t]} |B_v^H| \int_s^t e^{-\theta(t-v)} dv + \sigma \theta \sup_{v \in [s,t]} |B_v^H| \left(1-e^{-\theta(t-s)}\right) \int_0^s e^{-\theta(s-v)} dv \\
  & \leq & 2 \sigma \theta \sup_{v \in [s,t]} |B_v^H| |t-s| \,.
\end{eqnarray*}
Using the above inequality for $|Q_t - Q_s|$ and Applying \eqref{hd.fbm}, with $\alpha =H-\epsilon$, for $B_t^H - B_s^H$ yield
\[
  \left|X_t - X_s\right| \leq \sigma \eta_T |t-s|^{H-\epsilon} + 2 \sigma \theta \sup_{u \in [s,t]} |B_u^H| |t-s| \,.
\]

\end{proof}

\begin{proof}[Proof of Theorem \ref{disctheta}:]   

Let $T=nh$, $Z_n = \frac{1}{nh}\int^{nh}_0 X_t^2dt$, and $\psi_n = \frac{1}{n}\sum^{n}_{k=1}X_{kh}^2$.  Consider the function 
\[ 
  f(x)=\sqrt{x}\mathbf{1}_{\{0<H<3/4\}} + \sqrt{x}/\log(x) \mathbf{1}_{\{H=3/4\}} + x^{2-2H}\mathbf{1}_{\{3/4<H<1\}} \,.
\]
{\bf Step 1:}  We claim that $f(nh)\left|Z_n - \psi_n\right| \to 0$ almost surely as $n \to \infty$. 
Applying Markov's inequality for $ \delta>0, q>1$ yields
\begin{equation}\label{cd.diff}
  P(f(nh)\left|Z_n - \psi_n\right| > \delta) \leq \delta^{-q} f(nh)^q \mathbb{E} \left|Z_n - \psi_n\right|^q \,.
\end{equation}
We apply Minkowski's inequality to obtain
\begin{eqnarray*}
  \mathbb{E} \left|Z_n - \psi_n\right|^q & = & (nh)^{-q} \mathbb{E} \Big| \sum^{n}_{j=1} \int_{(j-1)h}^{jh} (X_t+X_{jh})(X_t-X_{jh})dt \Big|^q \\
  & \leq & (nh)^{-q}  \left(\sum_{j=1}^n \int_{(j-1)h}^{jh} \left(\mathbb{E}(|X_t + X_{jh}||X_t - X_{jh}|)^q\right)^{1/q} dt \right)^q \,.
\end{eqnarray*}
Taking into account of Lemma \ref{hd.xt}, we have
\[
  \mathbb{E} \left|Z_n - \psi_n\right|^q \leq (nh)^{-q} \left( \sum_{j=1}^n \int_{(j-1)h}^{jh} \|V_1(X_t + X_{jh})\|_{L^q} |t-jh|^{H-\epsilon} + \|V_2(X_t + X_{jh})\|_{L^q} |t-jh| dt\right)^q \,,
\]
where the $V_i$'s are defined in Lemma \ref{hd.xt}. By H\"older's inequality and the fact $\|X_t\|_{L^q} = (\mathbb{E}|X_t|^q)^{1/q} \leq M_q$ for all $t>0$, $q>1$, we can write
\[
  \|V_i(X_t + X_{jh})\|_{L^q} \leq 2  M_{qr_i} \|V_i\|_{qs_i} \,,
\]
where $1/r_i + 1/s_i = 1$. Therefore,
\[
  \mathbb{E} \left|Z_n - \psi_n\right|^q \leq C \left( M_{qr_1}^q \|V_1\|_{qs_1}^q h^{q(H-\epsilon)} + M_{qr_2}^q \|V_2\|_{qs_2}^q h^{q}\right) \,,
\]
where $C$ denotes a generic constant.
 
By \eqref{hd.fbm}, $\|V_1\|_{qs_1}^q =C T^{q \epsilon}$ for $\epsilon \in (0,H)$. By the self-similarity property of fBm, $\|V_2\|_{qs_2}^q =C T^{q H}$. Using these observations, we obtain
\[
  \mathbb{E} \left|Z_n - \psi_n\right|^q \leq C \left((nh)^{q \epsilon} h^{q(H-\epsilon)} + (nh)^{q H} h^{q} \right) \,,
\]
and plugging this inequality to \eqref{cd.diff}, we get
\begin{equation}\label{cd.diff.bc}
  P(f(nh)\left|Z_n - \psi_n\right| > \delta) \leq  C \delta^{-q} f(nh)^q \left((nh)^{q \epsilon} h^{q(H-\epsilon)} + (nh)^{qH} h^{q}   \right) \,.
\end{equation}
If the right-hand side of the above inequality is summable with respect to $n$, then    $f(nh)\left|Z_n - \psi_n\right| \to 0$ almost surely by the  Borel-Cantelli Lemma. We show this summability when $H \in (0,1/2)$ and the other cases are similar. The right-hand side of \eqref{cd.diff.bc} can be written as
\[
C  n^{-1-\lambda} \left((nh^{\beta_1})^{\gamma_1} + (nh^{\beta_2})^{\gamma_2} \right) \,,
\]
where
\[
 \beta_1 = \frac{q/2 + q \epsilon+q(H-\epsilon)}{1+\lambda+q \epsilon+q/2}\,,  \quad \beta_2 = \frac{3/2q+qH}{1+\lambda+q/2+qH}  \,,
\]
and $\gamma_i$'s are the denominator of $\beta_i$'s. Note that the positive variables $\epsilon $ and $\lambda$ can be arbitrarily small and $q$ can be arbitrarily  large. In this way, we have $\beta_1 \in (1, 1+2H)$ and  $\beta_2 \in (1,\frac{3+2H}{1+2H})$. If $nh^p \to 0$ for some $p \in (1, \min(\frac{3+2H}{1+2H}, 1+2H))$, then $nh^{\beta_i} \to 0$ by carefully choosing these free variables.\\

\noindent {\bf Step 2:} We prove the almost sure convergence of $\bar\theta_n$.  Denote $\rho = \sigma^2 H \Gamma(2H)$. Recall that $\widetilde\theta_T$ is given in Theorem \ref{tthetaclt}. By  the mean value theorem, we can write 
	\begin{equation} \label{btheta.mt}
	  \bar\theta_n-\theta=\left(\frac{\psi_n - Z_n}{\rho}+\widetilde\theta_T^{-2H}\right)^{-\frac{1}{2H}}-\theta = \widetilde\theta_T - \theta + \int_0^1 g_n(\lambda) d\lambda,
	\end{equation}
where $g_n(\lambda) = -\frac{1}{2H} \frac{\psi_n - Z_n}{\rho} \left(\lambda\frac{\psi_n - Z_n}{\rho} + \widetilde\theta_T^{-2H}\right)^{-\frac{1}{2H}-1}$. \\

The result in Step 1 also implies $Z_n - \psi_n \to 0$ almost surely as $n \to \infty$, so $\lim_{n \to \infty} g_n(\lambda) = 0$ a.s. for all $\lambda \in [0,1]$. Meanwhile, for almost all $\omega$,  there exists $N:=N(\omega) \in \mathbb{N}$ such that for $n > N$, \\
\[
\left|\frac{\psi_n - Z_n}{\rho}\right| < \frac{1}{3}\theta^{-2H}, \quad \left|\widetilde\theta_T^{-2H} - \theta^{-2H}\right| < \frac{1}{3} \theta^{-2H}\,.
\]
Then for $n>N$, $|g_n(\lambda)| \leq C \theta$.  By the dominated convergence theorem,
\[
  \lim_{n \to \infty}\int_0^1 g_n(\lambda) d\lambda = 0 \quad {\rm a.s.} \,.
\]
Then it is clear that $\bar\theta_n$ converges to $\theta$ almost surely. \\

\noindent {\bf Step 3:} We prove the asymptotic laws of $\bar\theta_n$.  Equation \eqref{btheta.mt} yields
	\[
	  f(nh)(\bar\theta_n-\theta) = f(T)(\widetilde\theta_T - \theta) + f(nh)\int_0^1 g_n(\lambda) d\lambda \,.
	\]
Using the result of Step 1 and the similar arguments in step 2, we obtain
\[
  \lim_{n \to \infty} \int_0^1 f(nh) g_n(\lambda) d\lambda = 0 \quad {\rm a.s.} \,.
\]
Then it is clear that $f(nh)(\bar\theta_n-\theta)$ converges in law to the same random variable as $f(T)(\widetilde\theta_T-\theta)$  when $T$ tends to infinity.  By Theorem \ref{tthetaclt}, we finish the proof.

\end{proof}

\section{Appendix}
This section contains some technical results needed in the proofs of the main theorems of the paper.  First we need to identify the limits of some multiple integrals. 
Denote  
  \begin{equation}\label{psi.def}
	  \psi(x,u) = \psi_T(x,u)= T^{4H+1}e^{-\theta T(u+x)} \;,
  \end{equation} 
     \begin{eqnarray}
       \varphi_1 (x) & := & \int^1_x [(t-x)^{2H-1}-1][(1-t)^{2H-1} - (1-t+x)^{2H-1}]dt \,,\label{vp1.def}\\
	   \varphi_2 (x) & := & \int^1_x [(t-x)^{2H-1} - t^{2H-1}] (1-t+x)^{2H-1}dt \,,\label{vp2.def}\\
   	   \varphi_3 (x,u) & := & \int^1_x [\sgn(u-t)|u-t|^{2H-1}-\sgn(x+u-t)|x+u-t|^{2H-1}]dt \,,\label{vp3.def}\\
	   \varphi_4 (x,u) & := & \int^1_x t^{2H-1}\sgn(x+u-t)|x+u-t|^{2H-1} \nonumber \\
	    & & - (t-x)^{2H-1}\sgn(u-t)|u-t|^{2H-1} dt \,, \label{vp4.def}  \\ 
	   \varphi_5 (x,u) & := & \int_x^1 \sgn(x+u-t)|x+u-t|^{2H-1}(1-t)^{2H-1} \nonumber \\
	    & & - \sgn(u-t)|u-t|^{2H-1} (1-t+x)^{2H-1} dt \,. \label{vp5.def}\\ \nonumber
     \end{eqnarray}
Fix an $\epsilon\in (0,  \frac{1}{4})$. Denote $[0,1]^2 = \mathcal{I}_1 \cup \mathcal{I}_2$ where $\mathcal{I}_1 = [0,\epsilon]^2$ and $\mathcal{I}_2 = [0,1]^2 \backslash [0,\epsilon]^2$. 
\begin{lemma}
  Let $H \in (0, \frac{1}{2})$. When $x, u\in \mathcal{I}_1$, we have the following estimates.
\begin{enumerate}
\item[(i)] 
\begin{equation}
 |\varphi_1(x) | \le x^{2H} \,,  \label{ecua1}
 \end{equation}
\item [(ii)]
 \begin{equation}
 |\varphi_3(x,u) | \le C(x^{2H} + u^{2H} + |u-x|^{2H}) \,, \label{ecua2}
 \end{equation}
\item[(iii)]
 \begin{equation}
 |\varphi_5(x,u) | \le C(x^{2H} + u^{2H} + |u-x|^{2H}) \,, \label{ecua3}
 \end{equation} 
 \end{enumerate} 
 where $C$ is a constant independent of $x,u$.
 \end{lemma}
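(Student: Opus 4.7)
\bigskip
\noindent\textbf{Plan.} All three bounds will be proved by reducing the integrals to closed-form algebraic expressions and expanding around the corner $\{x=u=0\}$. The main tools I will use are: (a) the antiderivative identity $\int\sgn(a-t)|a-t|^{2H-1}\,dt=-|a-t|^{2H}/(2H)$, valid through any sign change of $a-t$; (b) subadditivity $(a+b)^{2H}\le a^{2H}+b^{2H}$ for $a,b\ge 0$ and $2H\in(0,1)$; (c) the Taylor estimate $|(1-x)^{\alpha}-1|\le C|\alpha|x$ for $x\in[0,1/2]$ and bounded $\alpha$; and (d) the elementary inequality $x\le x^{2H}$ on $[0,1]$ when $2H<1$, which lets me absorb linear-in-$x$ remainders into the target $x^{2H}$ bound.

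For (i), I would decompose $\varphi_1(x)$ as $\int_x^1(t-x)^{2H-1}[(1-t)^{2H-1}-(1-t+x)^{2H-1}]\,dt$ minus $\int_x^1[(1-t)^{2H-1}-(1-t+x)^{2H-1}]\,dt$. The second integral is explicit, equal to $((1-x)^{2H}+x^{2H}-1)/(2H)$, which is $O(x^{2H})$ by (c) and (d). The first I would reduce via the rescaling $t=x+(1-x)s$ on one summand and the shift $s=t-x$ on the other, producing $[(1-x)^{4H-1}-1]B(2H,2H)+\int_{1-x}^1 s^{2H-1}(1-s)^{2H-1}\,ds$, each of order $O(x^{2H})$ by (c) and the boundedness of $s^{2H-1}$ near $s=1$. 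For (ii), tool (a) applied with $a=u$ and $a=x+u$ (both in $[0,2\epsilon)\subset[0,1)$) gives the closed form $\varphi_3(x,u)=(2H)^{-1}[|u-x|^{2H}-u^{2H}+(1-x-u)^{2H}-(1-u)^{2H}]$; the first two terms are trivially bounded by $u^{2H}+|u-x|^{2H}$, and the last two by $Cx\le Cx^{2H}$ via the mean value theorem on $[1/2,1]$.

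The main obstacle is (iii), since the two integrals defining $\varphi_5$ are individually $O(1)$, not $O(\epsilon^{2H})$; the desired bound comes only from cancellation. My plan is to first translate the second integral via $s=t-x$ so that both integrands carry the same weight $(1-t)^{2H-1}$, yielding $\varphi_5(x,u)=\int_x^1 F(x+u-t)(1-t)^{2H-1}\,dt-\int_0^{1-x}F(u-x-t)(1-t)^{2H-1}\,dt$, where $F(a)=\sgn(a)|a|^{2H-1}$. Splitting off endpoint pieces over $[1-x,1]$ and $[0,x]$, on which either $F$ or $(1-t)^{2H-1}$ stays bounded, and integrating $|\cdot|^{2H-1}$ directly, yields contributions of size $O(x^{2H}+|u-x|^{2H})$. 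The remaining middle piece $M=\int_x^{1-x}[F(x+u-t)-F(u-x-t)](1-t)^{2H-1}\,dt$ I would handle by splitting at $|u-t|=x$: the singular band (of length at most $2x$) contributes $O(x^{2H}+u^{2H})$ by the crude bound $|F(x+u-t)|+|F(u-x-t)|\le|u-t-x|^{2H-1}+|u-t+x|^{2H-1}$, and on the smooth region the difference rewrites as $(|u-t|-x)^{2H-1}-(|u-t|+x)^{2H-1}$, both nonnegative.

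The hardest sub-step will be the smooth region on $[u+x,1-x]$, where $(1-t)^{2H-1}$ also becomes singular at the upper limit. I plan to substitute $r=t-u-x$ and split $[0,1-u-2x]$ at $r=x$: the part over $[0,x]$ is bounded directly by a constant times $\int_0^x r^{2H-1}\,dr=O(x^{2H})$, while on $[x,1-u-2x]$ I combine the mean value estimate $r^{2H-1}-(r+2x)^{2H-1}\le 2x(1-2H)r^{2H-2}$ with a rescaling $r=(1-u-x)w$ that reduces the remaining integral to an incomplete Beta integral of size $O(x^{2H-1})$; the extra factor of $x$ from the mean value step then restores the $O(x^{2H})$ bound. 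Collecting all contributions yields $|\varphi_5(x,u)|\le C(x^{2H}+u^{2H}+|u-x|^{2H})$, completing the proof.
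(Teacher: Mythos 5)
Your proposal is correct, and for parts (i) and (iii) it takes a genuinely different route from the paper, while part (ii) coincides with the paper's argument (closed-form antiderivative of $\sgn(a-t)|a-t|^{2H-1}$ plus a mean value estimate for $(1-x-u)^{2H}-(1-u)^{2H}$). For (i), the paper notes the integrand is nonnegative, drops the $-1$, and splits $[x,1]$ at the midpoint $\frac{1+x}{2}$, using monotonicity of $(t-x)^{2H-1}$ on one half and the mean value theorem on the other; you instead expand the product and evaluate the two resulting integrals essentially in closed form, one exactly and one as $[(1-x)^{4H-1}-1]{\rm B}(2H,2H)$ plus an incomplete Beta tail over $[1-x,1]$ — cleaner, and it exposes the exact structure of the error. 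For (iii), the paper keeps $\zeta_{x,u}(t)$ as is and splits $(x,1)$ into $(x,u+x)\cup(u+x,\delta)\cup(\delta,1)$ for a fixed $\delta\in(\frac12,1)$, handling the last two intervals by an add-and-subtract decomposition into two products of differences, each controlled by the mean value theorem; your translation $s=t-x$ aligns the weights so the integrand collapses to $\bigl[F((u-t)+x)-F((u-t)-x)\bigr](1-t)^{2H-1}$ plus two boundary strips, and the single kernel difference $(|u-t|-x)^{2H-1}-(|u-t|+x)^{2H-1}$ is then estimated by splitting at $|u-t|=x$ together with an incomplete-Beta computation. Your route makes the cancellation manifest in one place and dispenses with the auxiliary $\delta$, at the price of bookkeeping the strips $[1-x,1]$ and $[0,x]$ and the (easy, and implicitly covered) smooth sub-region $[x,u-x)$. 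One cosmetic remark: item (i) as stated has constant $1$ in $|\varphi_1(x)|\le x^{2H}$, whereas both your argument and the paper's own proof only yield $Cx^{2H}$ with $C$ depending on $H$ and $\epsilon$; this is all that is used downstream, so it is not a defect of your proof.
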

 
 \begin{proof}  First we prove  \eqref{ecua1}. 
 Observe that 
     \begin{equation} \label{vp1.i1}
		 0 \leq \varphi_1(x) \leq \int_x^1 f(x,t) dt,
	 \end{equation} 
where 
      \[
      f(x,t)=(t-x)^{2H-1}[(1-t)^{2H-1}-(1-t+x)^{2H-1}]\,.
      \]
It is clear that  for $\frac{1+x}{2} \leq t \leq 1$
  \[
    f(x,t) \leq \big(\frac{1-x}{2}\big)^{2H-1} [(1-t)^{2H-1}-(1-t+x)^{2H-1}] \,,
  \]
whereas for $x \leq t \leq \frac{1+x}{2}$.  Applying the mean value theorem for the second factor of $f(x,t)$ yields
  \[
    f(x,t) \leq (1-2H)(t-x)^{2H-1}\big(\frac{1-x}{2}\big)^{2H-2} x \,.
  \]
Integrating the right-hand side of the above two inequalities with respect to $t$, we obtain 
	  \begin{eqnarray*}
	  \int_x^1 f(x,t)dt & \leq & \frac{1}{2H} \big(\frac{1-x}{2}\big)^{2H-1} \big[\big(\frac{1-x}{2}\big)^{2H} - \big(\frac{1+x}{2}\big)^{2H} + x^{2H}\big] + \frac{1-2H}{2H} \big(\frac{1-x}{2}\big)^{4H-2}x \\
	  & \leq & \frac{1}{2H} \big(\frac{1-\epsilon}{2}\big)^{2H-1} x^{2H} + \frac{1-2H}{2H} \big(\frac{1-\epsilon}{2}\big)^{4H-2}x^{2H} \;,
	  \end{eqnarray*}
where we have used the inequality $x<x^{2H}$ on $\mathcal{I}_1$ (i.e., $x \in (0,\epsilon)$). Thus, \eqref{ecua1} follows from the above inequality and \eqref{vp1.i1}.\\
 
  Next we prove \eqref{ecua2}.  
Note that the antiderivative of the function $\sgn(x)|x|^{2H-1}$ is $(2H)^{-1} |x|^{2H}$,  so we can compute $\varphi_3 (x,u)$ as follows
	  	\begin{equation} \label{vp3.ddef}
	  	 \varphi_3 (x,u) = \frac{1}{2H}( |u-x|^{2H} - (1-u)^{2H} + (1-x-u)^{2H} - u^{2H})\,. \\
	  	\end{equation}
 Applying the inequality 
	 \[
	 \left|(1-x-u)^{2H} - (1-u)^{2H}\right| \leq 2H (1-x-u)^{2H-1} x \leq 2H (1-2\epsilon)^{2H-1} x^{2H} \;,
	 \] 
	 and the triangular inequality to \eqref{vp3.ddef}  yields   
	 		\begin{eqnarray*}
	 		  |\varphi_3 (x,u)| &\leq& (2H)^{-1}(|u-x|^{2H} + u^{2H} + 2H(1-2\epsilon)^{2H-1}x^{2H}) \\
			  & \leq & C(|u-x|^{2H} + u^{2H} + x^{2H})   
  \quad \forall \;  x,u \in \mathcal{I}_1 \,.
	 		\end{eqnarray*} 
Finally,  we prove \eqref{ecua3}. Denote $$\zeta_{x,u}(t) = \sgn(x+u-t)|x+u-t|^{2H-1}(1-t)^{2H-1} - \sgn(u-t)|u-t|^{2H-1} (1-t+x)^{2H-1} .$$
Let $\delta \in (\frac{1}{2},1)$. Since $\epsilon \in (0,\frac{1}{4})$ and $(x,u) \in (0,\epsilon)^2$,  the interval $(x, 1)$ can be decomposed into the following three intervals, where \[
    J_1 = (x, \, u+x), \quad J_2 = (u+x, \, \delta), \quad J_3 = (\delta, \, 1) \,.
  \]
 Then $\varphi_5(x,u) = \sum_{k=1}^3 \int_{J_k} \zeta_{x,u}(t) dt$. We consider the above three integrals separately. \\
 
 \noindent
{\it Case 1:} When $ t \in J_1$, we have 
    \begin{equation} \label{j1.a}
	   (1-t)^{2H-1} \leq (1-u-x)^{2H-1} \leq (1 - 2\epsilon)^{2H-1} \,.
    \end{equation}
When $t$ falls in different subintervals of $J_1$, we bound $(1-t+x)^{2H-1}$ in different ways. Namely, if $t \in (x,u)$ and $ u \geq x$,
    \begin{equation} \label{j1.b}
      (1-t+x)^{2H-1} \leq (1+x-u)^{2H-1} \leq (1 - \epsilon)^{2H-1} \,.
    \end{equation}
If $t \in (x\vee u, x+u)$,
    \begin{equation} \label{j1.c}
      (1-t+x)^{2H-1} \leq (1-u)^{2H-1} \leq (1 - \epsilon)^{2H-1} \,.
    \end{equation}  
Applying \eqref{j1.a} for the first summand in $\zeta_{x,u}(t)$, \eqref{j1.b} and \eqref{j1.c} for the second summand, we can bound the integration of $\zeta_{x,u}(t)$ on $J_1$ as follows
        \begin{eqnarray*}
          \Big| \int_{J_1}\zeta_{x,u}(t)dt \Big| & \leq & (1-2\epsilon)^{2H-1} \int_{x}^{u+x} (x+u-t)^{2H-1} dt \\
		  & & + \, (1-\epsilon)^{2H-1} \big ( \int_x^u (u-t)^{2H-1} 1_{\{u \geq x\}} dt + \int_{x \vee u}^{x+u} (t-u)^{2H-1} dt \big ) \,.
        \end{eqnarray*}
	Integrating with respect to $t$  yields
	\[
	   \Big| \int_{J_1}\zeta_{x,u}(t)dt \Big| \leq C (u^{2H}  + (u-x)^{2H} 1_{\{u \geq x\}} + x^{2H}) \,.
	\]

\noindent		
{\it Case  2:} For $t \in J_2$, we rewrite 
    \begin{eqnarray*}
	   - \int_{J_2} \zeta_{x,u}(t) dt & = & \int_{u+x}^{\delta} (1-t)^{2H-1} \big( (t-u-x)^{2H-1}-(t-u)^{2H-1} \big) \\
		&& + \; (t-u)^{2H-1} \big( (1-t)^{2H-1}-(1-t+x)^{2H-1} \big) dt \,,
    \end{eqnarray*}
	which is nonnegative. In the above integrand, we bound $(1-t)^{2H-1}$ by $(1-\delta)^{2H-1}$ for the first summand. For the second summand, we apply  the mean value theorem for the difference part and bound $(t-u)^{2H-1}$ by $x^{2H-1}$.  Then integrating $t$ yields
	\begin{eqnarray*}
	  0 \leq - \int_{J_2} \zeta_{x,u}(t) dt & \leq & \frac{(1-\delta)^{2H-1}}{2H} \big((\delta - u - x)^{2H} - (\delta - u)^{2H} + x^{2H} \big) \\
	   & & + \; x^{2H} \big ((1-\delta)^{2H-1} - (1-u-x)^{2H-1} \big ) \\
	& \leq & \frac{(1-\delta)^{2H-1}}{2H} x^{2H} + (1-\delta)^{2H-1} x^{2H} \leq C x^{2H}\;.
	\end{eqnarray*}
	
\noindent
{\it Case 3:} For $t \in J_3$, we rewrite
   \begin{eqnarray*}
	  - \int_{J_3} \zeta_{x,u}(t) dt & = & \int_{\delta}^1 (t-u-x)^{2H-1}((1-t)^{2H-1}-(1-t+x)^{2H-1}) \\
	&& + \; (1-t+x)^{2H-1}((t-u-x)^{2H-1}-(t-u)^{2H-1}) dt \,,
   \end{eqnarray*}
which is nonnegative. In the above integrand, we bound $(t-u-x)^{2H-1}$ by $(\delta - 2\epsilon)^{2H-1}$ for the first summand. For the second summand, apply the mean value theorem for the difference part and bound $(1-t+x)^{2H-1}$ by $x^{2H-1}$. Then integrating $t$ yields
\begin{eqnarray*}
   0 \leq - \int_{J_3} \zeta_{x,u}(t) dt & \leq & \frac{(\delta - 2\epsilon)^{2H-1}}{2H} \big( (1 - \delta)^{2H} - (1 -  \delta +x)^{2H} + x^{2H} \big) \\
& & + \; x^{2H} \big( (\delta - u - x)^{2H-1}-(1-u-x)^{2H-1} \big) \\
& \leq & \frac{(\delta - 2\epsilon)^{2H-1}}{2H} x^{2H} +  x^{2H} (\delta - u - x)^{2H-1}  \leq  C x^{2H} \,.
\end{eqnarray*}
In the last step  we have applied the inequality $\delta - u - x \geq \delta - 2\epsilon$.
\end{proof}

\begin{lem} \label{psi.phi4}
Suppose $H \in (0, \frac{1}{2})$.  Let $\psi(x,u)$ and $\varphi_4(x,u)$ defined by \eqref{psi.def} and \eqref{vp4.def}, respectively. Fix $\epsilon \in (0,1/4)$. Then
 \begin{equation}\label{pp4.eq1}
 \lim_{T \to \infty} \int_{[0,\epsilon]^2} \psi(x,u) (x^{2H} + u^{2H} + |x-u|^{2H}) = 0 \,,
 \end{equation}
and
\begin{eqnarray}
 && \lim_{T \to \infty} \int_{[0,1]^2} \psi(x,u)\varphi_4(x,u) dxdu  \nonumber \\
       &  & \qquad = \theta^{-1-4H} \Big(\Gamma(2H)^2 (2H-2^{-1})  + \frac{\Gamma(2-4H)\Gamma(2H)\Gamma(4H)}{\Gamma(1-2H)}\Big) \label{pp4.eq2} \,.
\end{eqnarray}
\end{lem}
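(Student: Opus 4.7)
The plan is as follows. To prove \eqref{pp4.eq1}, the rescaling $y=Tx$, $v=Tu$ yields
\[
T^{2H-1}\int_{[0,T\epsilon]^2}e^{-\theta(y+v)}(y^{2H}+v^{2H}+|y-v|^{2H})dy\,dv,
\]
whose integrand is integrable on $[0,\infty)^2$; since $H<1/2$ the prefactor $T^{2H-1}\to 0$, yielding the claim.

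To prove \eqref{pp4.eq2}, I first rewrite $\varphi_4(x,u)$ via $s=t-x$ as
\[
\varphi_4(x,u)=\int_0^{1-x}\bigl[(s+x)^{2H-1}\sgn(u-s)|u-s|^{2H-1}-s^{2H-1}\sgn(u-x-s)|u-x-s|^{2H-1}\bigr]ds,
\]
and then perform the rescaling $y=Tx$, $v=Tu$, $r=Ts$ to obtain $\varphi_4(y/T,v/T)=T^{1-4H}\Phi_T(y,v)$ with
\[
\Phi_T(y,v)=\int_0^{T-y}\bigl[(y+r)^{2H-1}\sgn(v-r)|v-r|^{2H-1}-r^{2H-1}\sgn(v-y-r)|v-y-r|^{2H-1}\bigr]dr.
\]
Accordingly, the integral on the left of \eqref{pp4.eq2} equals $\int_{[0,T]^2}e^{-\theta(y+v)}\Phi_T(y,v)dy\,dv$.

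I would then pass to the limit via dominated convergence. Pointwise, $\Phi_T(y,v)\to\Phi_\infty(y,v)$, where $\Phi_\infty$ is defined by the same integrand but integrated over $[0,\infty)$. The improper integral defining $\Phi_\infty$ converges at $r=0$ since $2H-1>-1$, and at $r=\infty$ by cancellation: the identity $(y+r)(r-v)=r(r-v+y)-yv$ combined with a Taylor expansion shows that the integrand is $O(r^{4H-4})$ at infinity, which is integrable since $4H-4<-1$ for $H<1/2$. A dominating function integrable against $e^{-\theta(y+v)}$ on $[0,\infty)^2$ is obtained by splitting the $r$-range around the singularities in $\{0,v-y,v\}$ and bounding each piece by elementary inequalities.

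The main obstacle is the explicit evaluation of $I:=\int_0^\infty\int_0^\infty e^{-\theta(y+v)}\Phi_\infty(y,v)dy\,dv$. The homogeneity $\Phi_\infty(\lambda y,\lambda v)=\lambda^{4H-1}\Phi_\infty(y,v)$ together with the substitution $(y,v)\mapsto(y/\theta,v/\theta)$ gives $I=\theta^{-1-4H}I_1$, where $I_1$ is the same integral evaluated at $\theta=1$, so it suffices to compute $I_1$. Split $\Phi_\infty$ into its two summands and apply Fubini. For the first summand, the $y$- and $v$-integrals become one-dimensional Laplace transforms of $(y+r)^{2H-1}$ and $\sgn(v-r)|v-r|^{2H-1}$ respectively, expressible through incomplete gamma functions; a subsequent $r$-integration together with a beta-function identity delivers the $\Gamma(2H)^2(2H-\tfrac{1}{2})$ contribution. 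For the second summand, the substitution $w=v-y-r$ (keeping $y,r$) reduces the $(y,v)$-integral to a Laplace transform of $\sgn(w)|w|^{2H-1}$ over $\mathbb{R}$; combining with the $r$-Laplace transform of $r^{2H-1}$ and standard beta-gamma identities yields the $\Gamma(2-4H)\Gamma(2H)\Gamma(4H)/\Gamma(1-2H)$ contribution. Assembling the two pieces and restoring $\theta$ produces \eqref{pp4.eq2}.
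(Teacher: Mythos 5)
Your proof of \eqref{pp4.eq1} is correct and is the same rescaling argument the paper uses. Your treatment of \eqref{pp4.eq2} also begins exactly as the paper's does: rescale by $T$, identify the limit as a triple integral over $[0,\infty)^3$, and extract the factor $\theta^{-1-4H}$ by homogeneity. Your observation that the integrand of $\Phi_\infty$ is only $O(r^{4H-4})$ at infinity \emph{because of cancellation between its two summands} is also correct --- but it is precisely this observation that invalidates your final step. You propose to ``split $\Phi_\infty$ into its two summands and apply Fubini,'' evaluating each summand's $(y,v,r)$-integral separately. Each summand alone behaves like $r^{4H-2}$ as $r\to\infty$ (for instance, once $r>v$ the first summand is $-(y+r)^{2H-1}(r-v)^{2H-1}\sim -r^{4H-2}$, and after integrating out $y$ and $v$ its contribution is still of order $-r^{4H-2}$), and $\int^{\infty} r^{4H-2}\,dr$ diverges for $H\geq \tfrac14$. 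So for $H\in[\tfrac14,\tfrac12)$ the two one-summand integrals are each infinite, with opposite signs; Fubini is not applicable, and the attribution of the term $\Gamma(2H)^2(2H-\tfrac12)$ to the first summand and of the term $\Gamma(2-4H)\Gamma(2H)\Gamma(4H)/\Gamma(1-2H)$ to the second cannot be made. Your computation as written is legitimate only for $H\in(0,\tfrac14)$.

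The fix is a different regrouping, which is what the paper does: split the $t$-range (your $r$-range) according to the signs of the two kernels. The portions where the signs are positive give two absolutely convergent Beta-type integrals, $L_1=2H\,\Gamma(2H)^2$ (from $t\in(x,x+u)$ in the first term) and $L_2=\Gamma(2H)^2/2$ (from $t\in(x,u)$, $u>x$, in the second term), whose difference is $\Gamma(2H)^2(2H-\tfrac12)$; the two negative-sign tails are kept together as a single difference
\begin{equation*}
L_3=\int_{[0,\infty)^2}e^{-(x+u)}\Big(\int_{u\vee x}^{\infty}(t-x)^{2H-1}(t-u)^{2H-1}dt-\int_{x+u}^{\infty}t^{2H-1}(t-x-u)^{2H-1}dt\Big)dxdu\,,
\end{equation*}
which converges precisely by the $O(t^{4H-4})$ cancellation you identified and evaluates, via the mean value theorem and a Beta--Gamma identity, to $\Gamma(2-4H)\Gamma(2H)\Gamma(4H)/\Gamma(1-2H)$. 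You would need a decomposition of this kind before any Fubini or Laplace-transform computation can be invoked for $H\in[\tfrac14,\tfrac12)$. (Your dominated-convergence step is only sketched, but a uniform-in-$T$ polynomial bound on $\Phi_T$ can be produced along the lines you indicate, so I would not count that as the essential gap.)
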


\begin{proof} 
    We first prove \eqref{pp4.eq1}. For the first summand, making the change of variables $Tx \to x_1$ and $Tu \to x_2$ yields
    	     \begin{equation}\label{psi.x}	 
    		    \int_{[0,\epsilon]^2} T^{4H+1}e^{-\theta T(x+u)} x^{2H} dxdu = T^{2H-1} \int_{[0,T\epsilon]^2} e^{-\theta(x_1 + x_2)} x_1^{2H} dx_1 dx_2 \;,
    		 \end{equation}
    	which goes to $0$ as $T \to \infty$. A similar argument could be applied to the second summand. For the third summand, by symmetry it suffices to consider the integral on  the region $\{u>x\}$. Making the change of variables $T(u-x) \to x_1$, $Tx \to x_2$ yields
    	     \begin{equation}\label{psi.xu}	 
    		   \int_{[0,\epsilon]^2} T^{4H+1}e^{-\theta T(x+u)} |u-x|^{2H} dxdu = 2 T^{2H-1} \int_{[0,T\epsilon]^2, x_1 + x_2 \leq T \epsilon} e^{-\theta(x_1 + 2x_2)} x_1^{2H} dx_1 dx_2 \;,
    		 \end{equation}
    which goes to $0$ as $T \to \infty$.\\
 
 Next we show \eqref{pp4.eq2}. Set 
	\[
	\Theta:= \lim_{T \rightarrow \infty} \int_{[0,1]^2} \psi(x,u)\varphi_4(x,u) dxdu .
	\]
Making change of variables, $\theta T x \to x, \theta T u \to u, \theta T t \to t$, we can write
	\begin{eqnarray*}
 \Theta &=&   \theta^{-1-4H}\int_{[0,\infty)^2}e^{-(u+x)}dxdu \\
 &&\times \int^{\infty}_x [t^{2H-1}\sgn(x+u-t)|x+u-t|^{2H-1} - (t-x)^{2H-1}\sgn(u-t)|u-t|^{2H-1}]dt \,.
    \end{eqnarray*}
   The above integral can be decomposed as follows \[\Theta = \theta^{-1-4H} (L_1 - L_2 + L_3) \;,\] where
   \[
    L_1  :=  \int_{[0,\infty)^2} e^{-(x+u)} dxdu \int_{x}^{x+u} t^{2H-1}(x+u-t)^{2H-1} dt \;,
   \]
   \[
   L_2  :=  \int_{[0,\infty)^2,u>x} e^{-(x+u)} dxdu \int_{x}^{u} (t-x)^{2H-1}(u-t)^{2H-1} dt  \;,
   \]
   \[
   L_3 := \int_{[0,\infty)^2} e^{-(x+u)} dxdu \Big(\int_{u \vee x}^{\infty} (t-x)^{2H-1}(t-u)^{2H-1} dt - \int_{x+u}^{\infty} t^{2H-1}(t-x-u)^{2H-1}dt\Big) \,.
   \]
   Making the change of variables $t-x \to s$ and integrating $u$, we obtain
     \[ 
       L_1 = \Gamma(2H) \int_{[0,\infty)^2} e^{-(x+s)} (x+s)^{2H-1} dxds = \Gamma(2H)^2 2H \,.
     \]
   Denote by ${\rm B} (\alpha, \beta) $ the Beta function. Then
    \[
      L_2  =  {\rm B} (2H, 2H) \int_{[0,\infty)^2,u>x} e^{-(x+u)} (u-x)^{4H-1} dxdu  \,.
     \]
   By setting $u-x \to v$ and integrating in $x$ first, we deduce $L_2 = \Gamma(2H)^2/2$.
 To compute $L_3$, by symmetry it suffices to integrate on the region $\{u<x\}$.  For the second integral, we make the change of variables $t-u \to y $. In this way, we obtain
    \[
      L_3 = 2 \int_{  0<u<x<y<\infty } e^{-(u+x)}((y-u)^{2H-1}-(y+u)^{2H-1})(y-x)^{2H-1}dydxdu.
    \]
The change of variables $x-u \to a, y-x \to b$ yields
    \begin{eqnarray*}
	  L_3 & = & 2 \int_{\mathbb{R}_+^3} e^{-(a+2u)} b^{2H-1} \big[(a+b)^{2H-1} - (a+b+2u)^{2H-1}\big]dudadb \\
	  & = & 2 \int_{\mathbb{R}_+^3} e^{-(a+2u)} b^{2H-1} dudadb \int_a^{2u+a} (1-2H)(b+z)^{2H-2} dz \\
	  & = & 2(1-2H) \int_{\mathbb{R}_+^2} e^{-(a+2u)} duda \int_a^{2u+a} (\int_{\mathbb{R}_+} b^{2H-1} (b+z)^{2H-2} db) dz \,.
	\end{eqnarray*}
 Setting $z/(b+z) \to v$ and integrating $v$ on $[0,1]$, we obtain
    \begin{eqnarray*}
	  L_3 & = & 2(1-2H) {\rm B }(2-4H, 2H) \int_{\mathbb{R}_+^2} e^{-(a+2u)} duda \int_a^{2u+a} z^{4H-2}  dz \\
	  & = & \frac{\Gamma(2-4H)\Gamma(2H)\Gamma(4H)}{\Gamma(1-2H)} \,.
	\end{eqnarray*}
Then, the lemma follows from the above computations  of $L_1$, $L_2$ and $ L_3$.
   \end{proof}

\begin{lemma}
 Denote $\mathcal{I}_1 = [0,\epsilon]^2$ and $\mathcal{I}_2 = [0,1]^2 \backslash [0,\epsilon]^2$.  The functions $\psi$ and $\varphi_i$ are given by \eqref{psi.def} to \eqref{vp5.def}. For $j=1, 2$ and $i=1,2,3,5$, we have the following result.
    \begin{equation} \label{vp1}
   	  \lim_{T\rightarrow \infty} \int_{\mathcal{I}_j} \psi \varphi_i dxdu = 0  \,.
   	 \end{equation}
\end{lemma}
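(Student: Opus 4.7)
The plan is to split the domain $[0,1]^2$ into $\mathcal{I}_1$ and $\mathcal{I}_2$ and treat the two regions by separate mechanisms.

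For $j=2$: on $\mathcal{I}_2$ we always have $x+u \geq \epsilon$, so $\psi(x,u) \leq T^{4H+1}e^{-\theta T\epsilon}$, which decays to zero faster than any polynomial in $T$. It therefore suffices to show that $\int_{[0,1]^2}|\varphi_i(x,u)|\,dxdu < \infty$ for $i = 1,2,3,5$. For $\varphi_1$, both factors in the integrand are nonnegative (since $(t-x)^{2H-1} \geq 1$ on $(x,1)$ and $(1-t)^{2H-1} \geq (1-t+x)^{2H-1}$), so dropping the $-1$ and $-(1-t+x)^{2H-1}$ terms and substituting $t - x = s(1-x)$ gives $\varphi_1(x) \leq B(2H,2H)(1-x)^{4H-1}$, which is integrable on $[0,1]$. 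For $\varphi_2$, I would use $(1-t+x)^{2H-1} \leq x^{2H-1}$ on $(x,1)$ together with $\int_x^1(t-x)^{2H-1}\,dt \leq (2H)^{-1}$ and $\int_x^1 t^{2H-1}\,dt \leq (2H)^{-1}$ to obtain $|\varphi_2(x)| \leq Cx^{2H-1}$, also integrable. For $\varphi_3$ and $\varphi_5$, each inner integrand is a sum of products of power functions of the form $|a-t|^{2H-1}$ with exponent $2H-1 \in (-1,0)$, so the $t$-integral is uniformly bounded in $(x,u)$ up to finitely many explicit Beta-type factors, and Fubini closes the case.

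For $j=1$, I would invoke the pointwise bounds already established in the preceding lemma: $|\varphi_1(x)| \leq x^{2H}$ and $|\varphi_3(x,u)|, |\varphi_5(x,u)| \leq C(x^{2H}+u^{2H}+|x-u|^{2H})$ on $\mathcal{I}_1$. Combined with \eqref{pp4.eq1} of Lemma \ref{psi.phi4} (applied term by term), these three bounds immediately yield the desired limits for $i=1,3,5$. For $i=2$, I need the analogous bound $|\varphi_2(x)| \leq Cx^{2H}$ on $[0,\epsilon]$, which I would prove by splitting the defining integral at $t = 2x$: on $(x,2x)$, the elementary inequality $(t-x)^{2H-1} - t^{2H-1} \leq (t-x)^{2H-1}$ together with $(1-t+x)^{2H-1} \leq (1-\epsilon)^{2H-1}$ gives a contribution $\leq C\int_x^{2x}(t-x)^{2H-1}\,dt = Cx^{2H}/(2H)$; on $(2x,\epsilon)$, the mean value theorem yields $(t-x)^{2H-1} - t^{2H-1} \leq (1-2H)\,x\,(t-x)^{2H-2}$, and integrating against the bounded factor $(1-t+x)^{2H-1}$ produces an additional $x^{2H-1}$ from $\int_{2x}^{\epsilon}(t-x)^{2H-2}\,dt$, yielding a further $Cx^{2H}$ contribution. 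Then \eqref{pp4.eq1} closes the case $i=2$ on $\mathcal{I}_1$ as well.

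The main obstacle is the bound $|\varphi_2(x)| \leq Cx^{2H}$ on $\mathcal{I}_1$: since $(t-x)^{2H-1}$ and $t^{2H-1}$ have the same singular order at $t=x$ and nearly cancel, the $x^{2H}$ gain only materializes after carefully separating the near-diagonal and far-diagonal regimes as above. Once this estimate is in hand, everything else is either one-dimensional integrability of $|a|^{2H-1}$ with $2H-1 > -1$, or a direct application of \eqref{pp4.eq1}.
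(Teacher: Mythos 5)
Your handling of $j=2$ and of $j=1$, $i=1,3,5$ is essentially the paper's: the exponential bound $\psi\le T^{4H+1}e^{-\theta T\epsilon}$ on $\mathcal{I}_2$, and the estimates \eqref{ecua1}--\eqref{ecua3} fed into \eqref{pp4.eq1} on $\mathcal{I}_1$. You are in fact more careful than the paper on $\mathcal{I}_2$, where the integrability of the $\varphi_i$ does deserve a word; the only imprecision is that for $\varphi_5$ the $t$-integral is \emph{not} uniformly bounded in $(x,u)$ when $H\le\tfrac14$ and $x+u$ is near $1$ (the singularities $t=x+u$ and $t=1$ of the first product merge and produce a factor of order $|x+u-1|^{4H-1}$), but the Fubini argument you invoke on the triple integral does close the case.

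Where you genuinely depart from the paper is $j=1$, $i=2$. The paper rescales $x,u,t$ by $T$ and applies dominated convergence, using $(T-t+x)^{2H-1}\le x^{2H-1}$ together with $\int_x^\infty[(t-x)^{2H-1}-t^{2H-1}]\,dt=x^{2H}/(2H)$ to build the dominating function. Your alternative --- prove $\varphi_2(x)\le Cx^{2H}$ on $[0,\epsilon]$ and reuse \eqref{pp4.eq1} --- is a legitimate route and the bound is true, but your proof of it has a concrete gap: the integral in \eqref{vp2.def} runs over $t\in(x,1)$, not $t\in(x,\epsilon)$, so the decomposition $(x,2x)\cup(2x,\epsilon)$ omits the region $t\in(\epsilon,1)$. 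That region cannot be absorbed into your second case as written, because near $t=1$ the factor $(1-t+x)^{2H-1}$ is of order $x^{2H-1}$ and hence not bounded uniformly in $x$. The omission is repairable: for $x\le\epsilon/2$ and $t\in(\epsilon,1)$ one has $t-x\ge\epsilon/2$, so the mean value theorem gives $(t-x)^{2H-1}-t^{2H-1}\le(1-2H)(\epsilon/2)^{2H-2}x$, while $\int_\epsilon^1(1-t+x)^{2H-1}\,dt\le(2H)^{-1}$, so this piece contributes $O(x)=O(x^{2H})$; for $x\in[\epsilon/2,\epsilon]$ the crude bound $\varphi_2(x)\le Cx^{2H-1}$ from your $j=2$ step already gives $C_\epsilon x^{2H}$. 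Note also that the easier estimate $\varphi_2(x)\le Cx^{4H-1}$ (pull out $x^{2H-1}$ and integrate the bracket exactly) would \emph{not} suffice, since $\int_{\mathcal{I}_1}\psi\,x^{\alpha}\,dx\,du\sim T^{4H-1-\alpha}$ vanishes only for $\alpha>4H-1$; the exponent $2H$ works precisely because $H<\tfrac12$, which confirms that this estimate is indeed the delicate point you identified.
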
	

\begin{proof}
The proof of \eqref{vp1} is divided into the cases $j=2$ and $j=1$.\\

\noindent  {\it{Case  $j=2$}:}  Clearly, for $(x,u) \in \mathcal{I}_2$,
	 \begin{equation}\label{psi}
	  \psi(x,u) \leq T^{4H+1} e^{-\theta T\epsilon},
	 \end{equation}
which implies
	 \begin{equation}\label{vp4}
	   \int_{\mathcal{I}_2} \psi \varphi_i dxdu \;  \to 0 {\text{\quad for} \;} i=1, 2, 3, 5  
	 \end{equation}
as $T \to \infty$. Thus, \eqref{vp1} holds true for $j=2$.  \\

\noindent {\it{Case $j=1$}:}   For $i=2$, we evaluate the integral of $\psi \varphi_2$ on $\mathcal{I}_1$ by making change of variables $Tx \to x$, $Tu \to u$ and $Tt \to t$. In this way, we obtain
 	  \begin{equation}
 	   \int_{\mathcal{I}_1}\psi \varphi_2 dxdu = \int_{[0,T\epsilon]^2}e^{-\theta (u+x)}dxdu \int^T_x[(t-x)^{2H-1}-t^{2H-1}](T-t+x)^{2H-1}dt \,.
 	  \end{equation}
 Clearly $(T-t+x)^{2H-1} \leq x^{2H-1}$, so the integrand of the above triple integral is bounded by the function $e^{-\theta (u+x)} ((t-x)^{2H-1}-t^{2H-1}) \mathbf{1}_{\{t \geq x\}} x^{2H-1}$ which is integrable on $[0,\infty)^3$.  
 As $T \to \infty$, $(T-t+x)^{2H-1} \to 0$.  Applying the dominated convergence theorem, we have
 	\begin{equation} \label{vp2}
 		\lim_{T \to \infty}\int_{\mathcal{I}_1} \psi \varphi_2 dxdu = 0.
 	\end{equation}
The cases $i=1, 3, 5$ follows from  \eqref{ecua1}, \eqref{ecua2} and \eqref{ecua3} and Lemma \ref{psi.phi4}.
\end{proof}

\begin{lem} \label{fxi}
  Let $H \in (0, \frac{1}{2})$. Denote $f_{\alpha}(\xi,\eta,\eta') = |\xi-\eta|^{-1+\alpha} |\xi-\eta'|^{-1+\alpha} \xi^{1-2H}$ where $\xi,\eta,\eta' \in \mathbb{R}_+$. Assuming $\eta \geq \eta'$ and $0< \alpha < \frac{1}{2}$, we have the following results. 
  
  \begin{description}
   \item [(i)] For any $0<\epsilon<\alpha$, there exists some positive constants $K_1, K_2$ depending on $\alpha, \epsilon$ such that
  \begin{equation*}
	\int_{(0,\eta+\eta')}f_{\alpha}(\xi,\eta,\eta') d\xi \leq K_1 (\eta-\eta')^{-1+2\alpha-\epsilon} \eta^{1-2H+\epsilon} + K_2 (\eta-\eta')^{-1+2\alpha} \eta^{1-2H} \,.
  \end{equation*}
  
  \item [(ii)] If $\alpha \in (0,H)$, then there exists some positive constants $K_3, K_4$ depending on $\alpha$ and $H$ such that
  \begin{eqnarray*}
	\int_{[\eta+\eta', \infty)} f_{\alpha}(\xi,\eta,\eta') d\xi \leq K_3 (\eta')^{2\alpha-2H} + K_4 \eta^{1-2H} (\eta')^{-1+2\alpha} \,.
  \end{eqnarray*}
  \end{description}
\end{lem}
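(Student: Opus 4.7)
\textbf{Proof proposal for Lemma \ref{fxi}.} The plan is, in each part, to split the domain of integration into regions on which one of the three factors $|\xi-\eta|^{-1+\alpha}$, $|\xi-\eta'|^{-1+\alpha}$, $\xi^{1-2H}$ is controlled by a constant-like bound, then perform a Beta-type substitution to pull out the correct power of $(\eta-\eta')$ or $\eta'$, and finally check that the residual one-dimensional integral is finite under the hypothesis on $\alpha$. In both parts it will be convenient to use that $1-2H>0$, so $\xi\mapsto\xi^{1-2H}$ is increasing on $\mathbb{R}_+$.

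\emph{Part (i).} I would partition $(0,\eta+\eta')$ into $(0,\eta')$, $(\eta',\eta)$, $(\eta,\eta+\eta')$ (the middle piece is empty when $\eta=\eta'$). On $(0,\eta')$, bound $\xi^{1-2H}\le(\eta')^{1-2H}\le\eta^{1-2H}$ and substitute $y=\eta'-\xi$, then $y=(\eta-\eta')s$ to obtain
\[
\int_0^{\eta'}|\xi-\eta|^{-1+\alpha}|\xi-\eta'|^{-1+\alpha}\xi^{1-2H}d\xi
\le\eta^{1-2H}(\eta-\eta')^{-1+2\alpha}\!\int_0^{\eta'/(\eta-\eta')}\!s^{-1+\alpha}(1+s)^{-1+\alpha}ds.
\]
Since $\alpha\in(0,1/2)$, the last integral is dominated by $\int_0^\infty s^{-1+\alpha}(1+s)^{-1+\alpha}ds=\mathrm{B}(\alpha,1-2\alpha)<\infty$. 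On $(\eta',\eta)$, bound $\xi^{1-2H}\le\eta^{1-2H}$ and substitute $\xi=\eta'+(\eta-\eta')t$ to get $(\eta-\eta')^{-1+2\alpha}\eta^{1-2H}\mathrm{B}(\alpha,\alpha)$. On $(\eta,\eta+\eta')$, bound $\xi^{1-2H}\le(2\eta)^{1-2H}\le C\eta^{1-2H}$ and substitute $\xi=\eta+y$, $y=(\eta-\eta')s$, yielding the same $C\eta^{1-2H}(\eta-\eta')^{-1+2\alpha}$. Summing, the stated bound holds (one may in fact take $K_1=0$).

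\emph{Part (ii).} I would split at $\xi=2\eta$; this is consistent because $2\eta\ge\eta+\eta'$ by hypothesis. On the inner piece $(\eta+\eta',2\eta)$, use $\xi-\eta'\ge\eta$ to get $(\xi-\eta')^{-1+\alpha}\le\eta^{-1+\alpha}$, bound $\xi^{1-2H}\le C\eta^{1-2H}$, and integrate the remaining $(\xi-\eta)^{-1+\alpha}$ explicitly to get an upper bound of order $\eta^\alpha/\alpha$; the product is $O(\eta^{2\alpha-2H})$. On the outer piece $(2\eta,\infty)$, observe $\xi-\eta\ge\xi/2$ and $\xi-\eta'\ge\xi/2$, so the integrand is dominated by $4^{1-\alpha}\xi^{-1+2\alpha-2H}$, whose integral over $(2\eta,\infty)$ equals $C\eta^{2\alpha-2H}$—and this is where $\alpha<H$ is used to guarantee integrability at infinity. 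Combining, $\int_{\eta+\eta'}^\infty f_\alpha\,d\xi\le C\eta^{2\alpha-2H}\le C(\eta')^{2\alpha-2H}$ since $2\alpha-2H<0$ and $\eta\ge\eta'$. This yields the stated bound with $K_4=0$.

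\emph{Where the difficulty lies.} The only genuinely delicate point is Part (i): the double singularity at $\xi\sim\eta\sim\eta'$ would make the inner integral diverge were $\alpha\ge 1/2$. The hypothesis $\alpha<1/2$ is exactly what renders $\int_0^\infty s^{-1+\alpha}(1+s)^{-1+\alpha}ds$ finite after the change of variables $y=(\eta-\eta')s$, which is the mechanism that extracts the factor $(\eta-\eta')^{-1+2\alpha}$ uniformly in the ratio $\eta'/(\eta-\eta')$. Part (ii) is comparatively routine: the singularities inside the interval are absent, and only tail integrability matters, which is governed by $\alpha<H$. The $\epsilon$-flexibility in $K_1$ and the $K_4$ term in the statement are unnecessary for the bound one actually proves, but they are of course consistent with the sharper estimate derived above.
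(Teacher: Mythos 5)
Your proof is correct, and in both parts it follows a genuinely different (and sharper) route than the paper's. For part (i) the paper uses the same three-interval partition $(0,\eta']\cup(\eta',\eta]\cup(\eta,\eta+\eta')$, but on the two pieces adjacent to the near-diagonal singularity it relies on the splitting inequality $(a+b)^{-r}\le a^{-s}b^{-r+s}$ with $s=1-2\alpha+\epsilon$ applied to $\xi-\eta'=(\eta-\eta')+(\xi-\eta)$ (resp.\ $\eta-\xi=(\eta-\eta')+(\eta'-\xi)$); this is precisely the source of the $\epsilon$-loss and of the $K_1$ term in the statement. Your substitution $y=(\eta-\eta')s$ reduces those pieces to $\int_0^{\infty}s^{-1+\alpha}(1+s)^{-1+\alpha}\,ds=\mathrm{B}(\alpha,1-2\alpha)$, which converges exactly because $\alpha<\tfrac12$, and so extracts the clean factor $(\eta-\eta')^{-1+2\alpha}\eta^{1-2H}$ uniformly in $\eta'/(\eta-\eta')$; this shows $K_1=0$ suffices. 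For part (ii) the paper bounds $(\xi-\eta')^{-1+\alpha}\le(\xi-\eta)^{-1+\alpha}$ on the whole half-line and then splits $(\xi+\eta)^{1-2H}\le\xi^{1-2H}+\eta^{1-2H}$, which is what produces the $K_4\,\eta^{1-2H}(\eta')^{-1+2\alpha}$ term; your split at $\xi=2\eta$ gives the single bound $C\eta^{2\alpha-2H}\le C(\eta')^{2\alpha-2H}$ (using $2\alpha-2H<0$ and $\eta\ge\eta'$), so $K_4=0$ suffices. Both approaches establish the lemma as stated, and since your bounds are stronger they feed into the downstream estimate of $\|g\|_{\mathfrak{H}^{\otimes 2}}$ without change; what the paper's weaker form buys is nothing essential here — the $\epsilon$ in the statement is an artifact of its elementary splitting inequality. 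The only point worth making explicit is the degenerate case $\eta=\eta'$, where the middle interval is empty but the remaining integrals and the right-hand side are both infinite, so the inequality holds vacuously; this is implicit in both arguments.
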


\begin{proof}
   We  partition $(0,\eta+\eta')$ into three intervals:
  $(0, \eta'] \cup (\eta', \eta] \cup (\eta, \eta+\eta')$. We shall use the inequality
  \begin{equation} \label{eineq}
	  (a+b)^{-r} \leq a^{-s}b^{-r+s},\quad  \hbox{ for any} \ 0 < s < r {\rm \; and \;} a, b \in \mathbb{R}_+.
  \end{equation}
For any $0 < \epsilon < \alpha$, we write $\xi - \eta' = (\eta - \eta') + (\xi - \eta)$ and apply \eqref{eineq} for $(\xi - \eta')^{-1+\alpha}$ with $s=1-2\alpha+\epsilon$. In this way, we obtain 
 \begin{eqnarray}
  \int_{\eta}^{\eta+\eta'} f_{\alpha}(\xi,\eta,\eta') d\xi & \leq & \int_{\eta}^{\eta+\eta'} (\xi-\eta)^{-1+\alpha} (\eta-\eta')^{-(1-2\alpha+\epsilon)} (\xi-\eta)^{(-1+\alpha)+(1-2\alpha+\epsilon)} \xi^{1-2H} d \xi \nonumber \\
  &\leq &  (\eta - \eta')^{-1+2\alpha-\epsilon} (\eta + \eta')^{1-2H} \int_{\eta}^{\eta+\eta'} (\xi-\eta)^{-1 + \epsilon} d\xi \nonumber \\
  &\leq& 2^{1-2H}\epsilon^{-1} (\eta - \eta')^{-1+2\alpha-\epsilon} \eta^{1-2H+\epsilon} \label{fxi-2} \;.
 \end{eqnarray}
For $\xi \in (\eta', \eta]$, observe that
\begin{eqnarray}
  \int_{\eta'}^{\eta} f_{\alpha}(\xi,\eta,\eta') d \xi & \leq & \eta^{1-2H} \int_{\eta'}^{\eta} (\eta-\xi)^{-1+\alpha} (\xi-\eta')^{-1+\alpha} d\xi \nonumber \\
  & \leq & \eta^{1-2H} \int_0^{\eta - \eta'} ( \eta - \eta' - \xi)^{-1+\alpha} \xi^{-1+\alpha} d \xi \nonumber \\
  & \leq &  K_2 \eta^{1-2H} (\eta - \eta')^{-1+2\alpha} , \label{fxi-3}
\end{eqnarray}
where $ K_2 = {\rm B}(\alpha, \alpha) $.  
For $\xi \in (0, \eta']$, writing $\eta - \xi = (\eta - \eta') + (\eta' - \xi)$ and applying (\ref{eineq}) with $s=1-2\alpha+\epsilon$ again (for the same $\epsilon$ as above), we obtain
\begin{eqnarray}
  \int_0^{\eta'} f_{\alpha}(\xi,\eta,\eta') d \xi & \leq & (\eta')^{1-2H} \int_0^{\eta'} (\eta-\xi)^{-1+\alpha} (\eta' - \xi)^{-1+\alpha} d\xi\\
  & \leq & (\eta')^{1-2H} \int_0^{\eta'} (\eta - \eta')^{-(1-2\alpha+\epsilon)} (\eta' - \xi)^{(-1+\alpha)+(1-2\alpha+\epsilon)} (\eta' - \xi)^{-1+\alpha}   d\xi\nonumber \\
  & \leq & \epsilon^{-1} \eta^{1-2H+\epsilon} (\eta - \eta')^{-1+2\alpha-\epsilon} \label{fxi-4}.
\end{eqnarray}
Let $K_1 = 3\epsilon^{-1}$.  By  \eqref{fxi-2},  \eqref{fxi-3},  and \eqref{fxi-4},  the first part of lemma is obtained.\\

Now if $\alpha \in (0,H)$, then
  \begin{eqnarray}
   \int_{\eta+\eta'}^{\infty} f_{\alpha}(\xi,\eta,\eta') d\xi & \leq & \int_{\eta+\eta'}^{\infty} (\xi - \eta)^{-2+2\alpha} \xi^{1-2H} d\xi = \int_{\eta'}^{\infty} \xi^{-2+2\alpha} (\xi + \eta)^{1-2H} d\xi \nonumber \\
   & & \leq  \int_{\eta'}^{\infty} \xi^{-2+2\alpha} (\xi^{1-2H} + \eta^{1-2H}) d \xi \nonumber \\
   & & = K_3 (\eta')^{2\alpha-2H} + K_4 \eta^{1-2H} (\eta')^{-1+2\alpha} , \label{fxi-1}
   \end{eqnarray}
   where in the third step, we apply the inequality $(\xi + \eta)^{1-2H} \leq \xi^{1-2H} + \eta^{1-2H}$ for $H \in (0,\frac{1}{2})$.  Here the constants $K_3 = (2H-2\alpha)^{-1}$, $K_4 = (1-2\alpha)^{-1}$. This finishes the proof of the lemma.\\
\end{proof}  
  
\begin{lem} \label{at}
  For $n \geq 0$, and $H \in [\frac{3}{4},1)$, set
   $$A_{1,H}(T)={T^{3-4H}}\int^T_0 \int^{T-t}_0 s^n e^{-\theta s}t^{2H-2}(s+t)^{2H-2}dsdt,$$ and
   $$A_{2,H}(T)=T^{3-4H}\int^T_0 \int^T_0 s^n e^{-\theta s}t^{2H-2}(s+t)^{2H-2}dsdt.$$
   Then
   \begin{enumerate}
    \item [(i)] For $H \in (\frac{3}{4},1)$, $\displaystyle\lim_{T \to \infty} A_{1,H}(T) = \displaystyle\lim_{T \to \infty} A_{2,H}(T) = \frac{\theta^{-(n+1)}\Gamma(n+1)}{4H-3}$;\\
    \item [(ii)] For $H=\frac{3}{4}$, $\displaystyle\lim_{T \to \infty}\frac{A_{1,H}(T)}{\log T} = \displaystyle\lim_{T \to \infty}\frac{A_{2,H}(T)}{\log T} = \Gamma(n+1)\theta ^{-(n+1)}$.\\
   \end{enumerate}
\end{lem}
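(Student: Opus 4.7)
The plan is to analyze $A_{2,H}(T)$ first via Fubini plus a scaling substitution, then recover the asymptotics of $A_{1,H}(T)$ by showing the difference $A_{2,H}-A_{1,H}$ is of lower order.

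\textbf{Step 1 (reduction of $A_{2,H}$).} Swap the order of integration and substitute $t=sv$ in the inner integral to write
\begin{equation*}
  A_{2,H}(T)=T^{3-4H}\int_0^T s^{n+4H-3}e^{-\theta s}\,\phi_T(s)\,ds,\qquad \phi_T(s):=\int_0^{T/s} v^{2H-2}(1+v)^{2H-2}\,dv.
\end{equation*}
Since $2H-2>-1$, the integrand defining $\phi_T$ is integrable at $v=0$; for large $v$ it behaves like $v^{4H-4}$. When $H\in(3/4,1)$ the identity $(1+1/v)^{2H-2}=1+O(v^{-1})$ yields the asymptotic expansion
\begin{equation*}
  \phi_T(s)=\frac{(T/s)^{4H-3}}{4H-3}+R_T(s),\qquad \sup_{0<s\le T}|R_T(s)|\le C,
\end{equation*}
where the uniform bound near $s=T$ follows because both $\phi_T(s)$ and $(T/s)^{4H-3}$ remain bounded there. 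When $H=3/4$ an explicit antiderivative (via $v=w^2$, giving $2\sinh^{-1}\sqrt{M}$) produces instead $\phi_T(s)=\log(T/s)+R_T(s)$ with $|R_T(s)|\le C$ uniformly.

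\textbf{Step 2 (limit of $A_{2,H}$).} Inserting the expansion, the main term equals
\begin{equation*}
  T^{3-4H}\int_0^T s^{n+4H-3}e^{-\theta s}\cdot\frac{(T/s)^{4H-3}}{4H-3}\,ds=\frac{1}{4H-3}\int_0^T s^n e^{-\theta s}\,ds\longrightarrow\frac{\Gamma(n+1)\theta^{-(n+1)}}{4H-3}
\end{equation*}
for $H>3/4$, while the remainder is bounded by $CT^{3-4H}\int_0^\infty s^{n+4H-3}e^{-\theta s}\,ds = O(T^{3-4H})\to 0$. For $H=3/4$ the main term becomes
\begin{equation*}
  \int_0^T s^n e^{-\theta s}\log(T/s)\,ds = \log T\int_0^T s^n e^{-\theta s}\,ds -\int_0^T s^n e^{-\theta s}\log s\,ds,
\end{equation*}
and the remainder is $O(1)$; dividing by $\log T$ yields the claimed limit $\Gamma(n+1)\theta^{-(n+1)}$.

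\textbf{Step 3 (comparison $A_{2,H}-A_{1,H}$).} Writing
\begin{equation*}
  A_{2,H}(T)-A_{1,H}(T)=T^{3-4H}\int_0^T t^{2H-2}\!\int_{T-t}^T s^n e^{-\theta s}(s+t)^{2H-2}\,ds\,dt,
\end{equation*}
split the $t$-range at $T/2$. For $t\le T/2$ we have $s\ge T/2$ inside, so $s^n e^{-\theta s}\le C T^n e^{-\theta T/2}$ and this contribution is super-exponentially small. For $t\ge T/2$, using $2H-2<0$ both $t^{2H-2}$ and $(s+t)^{2H-2}$ are at most $(T/2)^{2H-2}$, and a further Fubini swap gives
\begin{equation*}
  \int_{T/2}^T\!\int_{T-t}^T s^n e^{-\theta s}\,ds\,dt=\int_0^{T/2} s^{n+1}e^{-\theta s}\,ds +O(e^{-\theta T/4})=O(1).
\end{equation*}
Therefore $A_{2,H}(T)-A_{1,H}(T)=O(T^{3-4H}\cdot T^{4H-4})=O(T^{-1})$, which is $o(1)$ for $H>3/4$ and $o(\log T)$ for $H=3/4$. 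Consequently $A_{1,H}$ inherits both limits from $A_{2,H}$.

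\textbf{Main obstacle.} The delicate point is proving that $R_T(s)$ is genuinely uniformly bounded on all of $(0,T]$, not just for $s$ bounded away from $0$ and $T$: the exponential weight $e^{-\theta s}$ is what lets us absorb the potentially large factor $s^{n+4H-3}$ at small $s$, while the boundary region $s\to T$ has to be checked separately because there $T/s\to 1$ and the leading-term asymptotic for $\phi_T$ degenerates.
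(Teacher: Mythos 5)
Your argument is correct, and it takes a genuinely different route from the paper. The paper proves the lemma by a squeeze: since the integrand is nonnegative and the domain of $A_{1,H}$ is contained in that of $A_{2,H}$, one has $A_{1,H}\le A_{2,H}$; the upper bound for $A_{2,H}$ comes from replacing $(s+t)^{2H-2}$ by $t^{2H-2}$ and integrating in $t$ explicitly, while the lower bound for $A_{1,H}$ comes from replacing $t^{2H-2}$ by $(s+t)^{2H-2}$ (and, for $H=\tfrac34$, from $t\le s+t$ so that $t^{-1/2}(s+t)^{-1/2}\ge (s+t)^{-1}$, plus an application of L'H\^opital's rule for the matching upper bound). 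You instead perform the exact scaling $t=sv$ to reduce the inner integral to the one-variable profile $\phi(M)=\int_0^M v^{2H-2}(1+v)^{2H-2}\,dv$ with $M=T/s\ge 1$, establish $\phi(M)=\frac{M^{4H-3}}{4H-3}+O(1)$ (resp.\ $\log M+O(1)$) uniformly, and then control $A_{2,H}-A_{1,H}$ directly by splitting at $t=T/2$. Both proofs are elementary and complete; the paper's is shorter and sidesteps the uniformity issue you correctly flag as the delicate point (the remainder bound on $\phi$ needs $\int_1^\infty v^{4H-5}\,dv<\infty$ and boundedness near $M=1$, both of which you have). Your version yields more: explicit error rates, namely $A_{2,H}(T)=\frac{1}{4H-3}\int_0^T s^ne^{-\theta s}ds+O(T^{3-4H})$ and $A_{2,H}-A_{1,H}=O(T^{-1})$, and it handles $H>\tfrac34$ and $H=\tfrac34$ in a single framework without L'H\^opital. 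One cosmetic remark: the contribution from $t\le T/2$ in Step 3 is exponentially (not super-exponentially) small, but this does not affect anything.
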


\begin{proof}

 (i) For $H \in (\frac{3}{4},1)$, we have  $$A_{2,H}(T) \leq T^{3-4H} \int^T_0 \int^T_0 s^n e^{-\theta s} t^{4H-4} dsdt,$$ and $$A_{1,H}(T) \geq T^{3-4H} \int^T_0 \int^{T-t}_0 s^n e^{-\theta s} (s+t)^{4H-4} dsdt.$$  For the right-hand sides of the above two inequalities, we integrate  first in $t$  to obtain 
\begin{eqnarray*}
&&  \frac{1}{4H-3} \left(\int_0^T s^n e^{-\theta s}ds - T^{3-4H} \int_0^T s^{n+4H-3} e^{-\theta s} ds \right)\\
&&\qquad\qquad  \leq A_{1,H}(T) \leq A_{2,H}(T) \leq \frac{1}{4H-3}\int^T_0 s^n e^{-\theta s}ds.
\end{eqnarray*}
 This yields (i) by letting $T \to \infty$. \\

\noindent  (ii) For $H=\frac{3}{4}$, by the  L'Hopital rule,  we have 
 \begin{equation*}
 \lim_{T \to \infty}\frac{A_{2,H}(T)}{\log T} = \lim_{T \to \infty} T\Big[\int^T_0 s^n e^{-\theta s} T^{-\frac{1}{2}}(s+T)^{-\frac{1}{2}}ds + \int^T_0 T^n e^{-\theta T} t^{-\frac{1}{2}} (T+t)^{-\frac{1}{2}}dt\Big].
 \end{equation*}
The second summand on the right-hand side of the above equation goes to 0 as $T \to\infty$, so 
 \begin{equation}\label{ah-1}
   \lim_{T \to \infty}\frac{A_{2,H}(T)}{\log T} \leq \int^{\infty}_0 s^n e^{-\theta s}ds.
 \end{equation}\\
 On the other hand, by the inequality $t \leq s+t$, \\
 \begin{eqnarray*}
 \frac{A_{1,H}(T)}{\log T} & \geq & \frac{1}{\log T}\displaystyle \int^T_0 \int^{T-t}_0 s^n e^{-\theta s} (s+t)^{-1} dsdt \\
 & = &\frac{1}{\log T} \Big[ \log T \int^T_0 s^n e^{-\theta s}   ds - \int^T_0 s^n e^{-\theta s} \log s  ds\Big]. \\
 \end{eqnarray*}
 The function $s^n e^{-\theta s} \log s$ is integrable on $[0,\infty)$. Thus,
 \begin{equation}\label{ah-2}
	 \lim_{T\to\infty}\frac{A_{1,H}(T)}{\log T} \geq \int^{\infty}_0 s^n e^{-\theta s}ds.
 \end{equation}\\
 By (\ref{ah-1}) and (\ref{ah-2}), we conclude the proof of (ii). 
\end{proof}

\begin{lem} \label{lem5.1}  Let $F_T$, $\widetilde F_T$ be defined by \eqref{e.ft1} and \eqref{e.ft2}, respectively. Moreover, let $R_1$ be defined in Part (iii) of Theorem \ref{thetaclt}. Then we have the following convergence results.  

\begin{description}
\item[(i)]
 When  $0 < H < \frac{1}{2}$ we have
\begin{equation}
 \lim_{T\to\infty}\EE\left( \frac{1}{T}F_T^2\right) = 4H^2 \theta^{1-4H} \Gamma(2H)^2 \Big((4H-1) + \frac{2\Gamma(2-4H)\Gamma(4H)}{\Gamma(2H)\Gamma(1-2H)}\Big)
 \,. \label{e.lemm5.1}
\end{equation}
\item[(ii)] When $H = \frac{3}{4}$, we have
\begin{equation}
 \lim_{T\to\infty}\frac{\EE\left(F_T^2\right)}{T\log(T)} = 9/4 \theta^{-2} \,
 \,. \label{e.lemm5.1-2}
\end{equation}
\item[(iii)] When $H > \frac{3}{4}$,  we have 
\begin{equation}
 \lim_{T\to\infty}\EE\left(T^{2-4H}F_T^2\right) = \frac{16\alpha_H^2 \theta^{-2}}{(4H-2)(4H-3)} \,, \label{e.lemm5.1-3}
 \end{equation}
\begin{equation}
 \lim_{T \to \infty} \mathbb{E}[ T^{1-2H} R_1 \widetilde F_T]=\frac{8 \alpha_H^2 \theta^{-1}}{(4H-2)(4H-3)} \,, 
\label{e.lemm5.1-4} 
 \end{equation}
 where $\alpha_H = H(2H-1)$.
\end{description}
\end{lem}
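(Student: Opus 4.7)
The proof of all four claims starts from the Wiener-chaos isometry $F_T = I_2(e^{-\theta|s-t|}\mathbf{1}_{[0,T]^2})$, which yields $\mathbb{E}[F_T^2] = 2\|e^{-\theta|s-t|}\mathbf{1}_{[0,T]^2}\|_{\mathfrak{H}^{\otimes 2}}^2$. The strategy is then to compute this norm asymptotically using the representation of $\langle \cdot, \cdot \rangle_\mathfrak{H}$ appropriate to the value of $H$: the derivative formula (\ref{hinnerp}) when $H < 1/2$, and the simpler kernel formula (\ref{ip.ghalf}) when $H > 1/2$. Part (ii) at the critical exponent $H = 3/4$ will be handled by the latter together with a logarithmic divergence extracted from Lemma \ref{at}.

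For part (i), applying (\ref{hinnerp}) in each of the two tensor factors gives the identity (\ref{ET2}). I would substitute the explicit derivatives $\partial_{t_1} e^{-\theta|s_1-t_1|} = -\theta\,\sgn(t_1-s_1)\,e^{-\theta|s_1-t_1|}$ and $\partial_{s_1} R_H(s_1,s_2) = H\bigl[s_1^{2H-1} - \sgn(s_1-s_2)|s_1-s_2|^{2H-1}\bigr]$, then rescale $s_i = T\tilde s_i$, $t_i = T\tilde t_i$. The Jacobian $T^4$ combined with the $T^{2H-1}$ from each $\partial R_H$ produces the weight $T^{4H+1}$ which, together with the two exponentials, becomes exactly $\psi(x,u)$ of (\ref{psi.def}) after the further substitution $x = \tilde t_1 - \tilde s_1$, $u = \tilde t_2 - \tilde s_2$ (using the $s_i \leftrightarrow t_i$ symmetries to restrict to $\tilde t_i > \tilde s_i$). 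Expanding the product of the two bracketed $\partial R_H$ factors and integrating out one of the center-of-mass variables over $[0,1-x]\times[0,1-u]$ produces a two-dimensional integral of the form $\int \psi(x,u)\sum_{i=1}^{5} c_i \varphi_i(x,u)\,dx\,du$, with the $\varphi_i$ as in (\ref{vp1.def})--(\ref{vp5.def}). The lemmas preceding this one show $\int \psi \varphi_i \to 0$ for $i \in \{1,2,3,5\}$, while Lemma \ref{psi.phi4} supplies the exact limit for the $\varphi_4$ contribution, and combining with the overall prefactor $8H^2\theta^2$ (the $\theta^2 H^2$ from the four partial derivatives times the symmetry factor $4$) reproduces (\ref{e.lemm5.1}).

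For parts (ii) and (iii), the formula (\ref{ip.ghalf}) applied twice yields
\[
\mathbb{E}[F_T^2] = 2\alpha_H^2 \int_{[0,T]^4} e^{-\theta|s_1-t_1|} e^{-\theta|s_2-t_2|}|s_1-s_2|^{2H-2}|t_1-t_2|^{2H-2}\,ds_1\,ds_2\,dt_1\,dt_2 .
\]
Using the $s_i \leftrightarrow t_i$ symmetries to restrict to $s_i < t_i$ (gaining a factor of $4$) and substituting $r_i = t_i - s_i$, the $r_i$-integrals are exponentially damped to the region $r_1 + r_2 = O(1)$, and a further change of variables in $(s_1, s_2, r_1, r_2)$ casts the spatial integral into the form covered by Lemma \ref{at} with $n = 0$. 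For $H > 3/4$ this produces (iii) directly, and at $H = 3/4$ Lemma \ref{at}(ii) extracts the $\log T$ divergence that yields (ii). For the cross term (\ref{e.lemm5.1-4}), I apply (\ref{ip.rosenb}) with $f(u_1,u_2) = e^{-\theta T|u_1-u_2|}$ to obtain
\[
\mathbb{E}[R_1 \widetilde F_T] = 2T^{2H}\alpha_H^2 \int_0^1 dv \int_{[0,1]^2} e^{-\theta T|u_1-u_2|}|u_1-v|^{2H-2}|u_2-v|^{2H-2}\,du_1\,du_2 ,
\]
then substitute $u_1 - u_2 = w/T$ and invoke dominated convergence (using $|u+w/T-v|^{2H-2} \to |u-v|^{2H-2}$ a.e.\ with an integrable majorant) to obtain the constant $8\alpha_H^2\theta^{-1}/[(4H-2)(4H-3)]$.

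The main obstacle is part (i). Beyond the routine but tedious justification of (\ref{hinnerp}) when one factor is only weakly differentiable (standard via smooth approximation of $e^{-\theta|\cdot|}$), the bookkeeping of the sign functions in the expansion of the two bracketed $\partial R_H$ factors is intricate and must be carried out carefully to recognize exactly the five $\varphi_i$'s. The vanishing of the non-$\varphi_4$ contributions in the limit relies crucially on the delicate pointwise bounds on $\varphi_1, \varphi_3, \varphi_5$ proved in the appendix, which were specifically tailored for this purpose. By contrast, the $H \geq 3/4$ statements reduce to essentially direct applications of Lemma \ref{at} once the kernel representation (\ref{ip.ghalf}) is in place.
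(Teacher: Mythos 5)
Your plan for parts (ii), (iii) and for \eqref{e.lemm5.1-4} is essentially sound: the reduction via \eqref{ip.ghalf}, the exponential damping in the difference variables, and the appeal to Lemma \ref{at} with $n=0$ mirror what the paper does (the paper reaches Lemma \ref{at} through L'Hopital's rule and a seven-way splitting of the domain rather than your change of variables, but for $H\ge 3/4$ the kernel $|s_1-s_2|^{2H-2}|t_1-t_2|^{2H-2}$ is translation invariant, so your route can be made to work; your dominated-convergence argument for the cross term with $R_1$ is in fact cleaner than the paper's).

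Part (i), however, has a genuine structural gap. You propose to rescale the four-dimensional integral \eqref{ET2} by $T$ and then ``integrate out the center-of-mass variables'' to land on $\int_{[0,1]^2}\psi\sum_i c_i\varphi_i\,dx\,du$. This cannot work as stated: after setting $x=\tilde t_1-\tilde s_1$ and $u=\tilde t_2-\tilde s_2$ there remain \emph{two} free variables, and the factors $\partial_{s_1}R_H(s_1,s_2)$ and $\partial_{t_2}R_H(t_1,t_2)$ couple the two pairs, so the function of $(x,u)$ obtained by integrating them out is a genuine \emph{double} integral with no closed form. The functions $\varphi_1,\dots,\varphi_5$ of \eqref{vp1.def}--\eqref{vp5.def} are \emph{single} integrals $\int_x^1(\cdots)\,dt$, and the appendix bounds \eqref{ecua1}--\eqref{ecua3} and Lemma \ref{psi.phi4} are tailored to exactly that one-dimensional structure. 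The missing idea is L'Hopital's rule applied to $\frac1T\int_{[0,T]^4}$: differentiating in $T$ replaces the four-fold integral by the boundary terms in which one of the four variables is pinned at $T$, i.e.\ by the three-dimensional integrals $I_1$ and $I_2$ of \eqref{eq4}. Only after this reduction does the change of variables leave a single inner $t$-integral, which is what produces the $\varphi_i$'s in \eqref{i1} and \eqref{i2.d}. Without that step your identification of the integrand with $\sum_i c_i\varphi_i$ fails, the concentration of $\psi$ at the corner $x=u=0$ (note $\int\psi\,dx\,du\sim T^{4H-1}\to\infty$ for $H>1/4$) would have to be balanced against the vanishing of a two-dimensional inner integral whose behaviour near the corner is not controlled by anything in the appendix, and the proof of \eqref{e.lemm5.1} does not close.
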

In the above lemma, we do not give a statement when $H \in [\frac{1}{2}, \frac{3}{4})$, because this case has been studied in \cite{hn}.\\

\begin{proof}  
	{\bf Part (i):} Assume $H \in (0,\frac{1}{2})$. Applying L'Hopital's rule to \eqref{ET2} yields
    \begin{equation} \label{eft2}
		\lim_{T \to \infty} \mathbb{E}\left(\frac{1}{T} F_T^2\right) = \lim_{T \to \infty}4H^2\theta^2(I_1+I_2)\,, 
	\end{equation}
where
    \begin{eqnarray}
  & &  I_1 = (H\theta)^{-1} \int_{[0,T]^3} e^{-\theta(T-t_1)}\frac{\partial e^{-\theta|s_2-t_2|}}{\partial s_2}[T^{2H-1}-(T-s_2)^{2H-1}]\frac{\partial R_H(t_1,t_2)}{\partial t_2}ds_2dt_1dt_2 \,, \nonumber\\
 &&I_2 = - (H\theta)^{-1} \int_{[0,T]^3} e^{-\theta(T-s_1)}\frac{\partial e^{-\theta|s_2-t_2|}}{\partial s_2}\frac{\partial R_H(s_1,s_2)}{\partial s_1} \nonumber\\
 &&  \qquad \qquad  \times[t_2^{2H-1}+(T-t_2)^{2H-1}]ds_1ds_2dt_2\,. \label{eq4}
    \end{eqnarray}
To compute the limit of $\mathbb{E} (\frac{1}{T} F_T^2)$ we will consider that   of $I_1$ and $I_2$.  \\

\noindent {\it Computation of $\lim_{T\to \infty}I_1$}: \
 We first compute explicitly the partial derivatives in the integrand of $I_1$.  On the region $\{t_2 > s_2\}$, we make change of variables $1-\frac{t_1}{T} \to u$, $ \frac{t_2}{T} - \frac{s_2}{T} \to x$ and $ 1-\frac{s_2}{T} \to t$, and on the region $\{t_2 < s_2\}$, we make change of variables $1-\frac{t_1}{T} \to u$, $ \frac{s_2}{T} - \frac{t_2}{T} \to x$  and $ 1-\frac{t_2}{T} \to t$. In this way, $I_1$ can be written as
 \begin{eqnarray} 
	 I_1 & = & \int_{[0,1]^3, x \leq t} T^{4H+1} e^{-\theta T (u+x)} (1-t^{2H-1})\nonumber\\
	 &&\qquad  \left((1-t+x)^{2H-1} - \sgn(x+u-t)|x+u-t|^{2H-1}\right) dudxdt \nonumber \\
	 && - \int_{[0,1]^3, x \leq t} T^{4H+1} e^{-\theta T (u+x)} \left(1-(t-x)^{2H-1}\right) \left((1-t)^{2H-1} - \sgn(u-t)|u-t|^{2H-1}\right) dudxdt \,. \nonumber\\\label{i1.s}
 \end{eqnarray} 

Reorganize the  terms in the above integrals we have 
     \begin{equation} \label{i1}
     I_1 = \int_{[0,1]^2} \psi(x,u) \sum_{i=1}^4\varphi_i dxdu \,,
     \end{equation}
where the functions $\psi$, $\varphi_i$ are given by \eqref{psi.def} to \eqref{vp4.def}.

By \eqref{vp1}, we see 
    \begin{equation} \label{li1}
    \lim_{T\to\infty}I_1 = \lim_{T \to \infty} \int_{[0,1]^2} \psi(x,u) \varphi_4 (x,u) dxdu \,,
	 \end{equation}
whose value is computed in \eqref{pp4.eq2} of Lemma \ref{psi.phi4}.\\

\noindent {\it Computation of $\lim_{T\to\infty} I_2$:} We first compute explicitly the partial derivatives in the integrand of \eqref{eq4}.  On the region $\{s_2 > t_2\}$, we make change of variables $T - s_1 \to Tu$, $s_2 - t_2 \to Tx$ and $T - t_2 \to Tt$, and on the region $\{t_2 > s_2\}$, we make change of variables $T - s_1 \to Tu$, $t_2 - s_2 \to Tx$  and $T - s_2 \to Tt$. In this way,
\begin{eqnarray}
  I_2 & = & \int_{[0,1]^3, t \geq x} T^{4H+1} e^{-\theta T (u+x)} \left((1-u)^{2H-1} + \sgn (x+u-t)|x+u-t|^{2H-1}\right)  \nonumber\\
  & & \left(t^{2H-1} + (1-t)^{2H-1}\right)dudxdt - \int_{[0,1]^3, t \geq x} T^{4H+1} e^{-\theta T (u+x)}  \nonumber \\
  & & \left((1-u)^{2H-1} + \sgn (u-t)|u-t|^{2H-1}\right) \left((1-t+x)^{2H-1} + (t-x)^{2H-1}\right) dudxdt \,. \nonumber \\ \label{i2.s}
\end{eqnarray} 
Note that 
\[
  \int_x^1 \left(t^{2H-1} + (1-t)^{2H-1}\right) - \left((1-t+x)^{2H-1} + (t-x)^{2H-1}\right) dt = 0 \;,
\]
so $I_2$ can be simplified and rewritten as
\begin{equation}\label{i2.d}
  I_2 = \int_{[0,1]^2} \psi(x,u) \big(\varphi_4(x,u) + \varphi_5(x,u) \big) dxdu \;,
\end{equation}
where $\psi(x,u)$, $\varphi_4(x,u)$ and $\varphi_5(x,u)$  are given by \eqref{psi.def}, \eqref{vp4.def} and \eqref{vp5.def} respectively.
By \eqref{li1} and the result of \eqref{vp1} for $i=5$, we have
\begin{equation} \label{li2}
	\lim_{T \to \infty} I_2 = \lim_{T \to \infty} I_1 \,.
\end{equation}
Then part {\bf (i)} follows from \eqref{eft2},  \eqref{li1}, \eqref{li2} and \eqref{pp4.eq2}.
 \\

\noindent {\bf Part (ii) and (iii): } Assume $H \geq 3/4$. Using \eqref{ip.ghalf}, we  have 
\begin{equation} \label{eft2-2}
\mathbb{E} (F_T^2) =  2\alpha_H^2 I_T\,,
\end{equation}
where $\alpha_H = H(2H-1)$, and 
\begin{equation} \label{eq1}
 I_T=\int_{[0,T]^4}e^{-\theta |s_2-u_2|- \theta|s_1-u_1|}|s_2-s_1|^{2H-2}|u_2-u_1|^{2H-2}du_1du_2ds_1ds_2\,. 
\end{equation}
Applying   L'Hopital rule yields 
\begin{empheq}[left=\empheqlbrace]{align}
 & \lim_{T \to \infty} \mathbb{E} (T^{2-4H} F_T^2)   =   \frac{8\alpha_H^2}{4H-2} \lim_{T \to \infty} T^{3-4H} J_T  
&\qquad\qquad \hbox{when $H \in (\frac{3}{4},1)$}  \label{it1}\\ 
 & \lim_{T \to \infty}\frac{\mathbb{E} F_T^2}{T \log T}
  =   \frac{9}{8}\lim_{T \to \infty}\frac{J_T}{\log T}    
&\qquad\hbox{when $H=\frac 34$},  \label{it2}  
\end{empheq} 
where 
\[
J_T = \int_{[0,T]^3}e^{-\theta |T-u_2|- \theta|s_1-u_1|}(T-s_1)^{2H-2}|u_2-u_1|^{2H-2}du_1du_2ds_1\,.
\]
Denote 
\[
  h(T) = T^{3-4H} {\bf{1}}_{\{H \in (\frac{3}{4}, 1)\}} + (\log T)^{-1} {\bf{1}}_{\{H=\frac{3}{4}\}} \,.
\]
Then, finding the  limits \eqref{it1} and \eqref{it2} is reduced to the computation of $\lim_{T \to \infty}h(T)J_T$. \\

Making the change of variables $x=T-u_2$, $y=u_1-s_1$ and $ z=T-s_1$ in the  region $\{u_1 > s_1\}$
and the change of variables $x=T-u_2$,  $ y=s_1-u_1$, $ z=T-s_1$ in the region $\{u_1 < s_1\}$, we can write $J_T$ as follows
\begin{eqnarray}
   J_T & = &  \int_{[0,T]^3, y<z} e^{-\theta (x+y)} z^{2H-2} |x+y-z|^{2H-2} dxdydz \nonumber \\
   & & + \, \int_{[0,T]^3, y+z < T} e^{-\theta (x+y)} z^{2H-2} |y+z-x|^{2H-2} dxdydz \label{jt.s}\,.
\end{eqnarray}
Consider the functions
\[
  f_1(x,y,z) = e^{-\theta (x+y)} z^{2H-2} |x+y-z|^{2H-2} \,, \qquad f_2(x,y,z)= e^{-\theta (x+y)} z^{2H-2} |y+z-x|^{2H-2} \,.
\]
For the first integral of \eqref{jt.s}, we split the integration interval $\{y<z \}$ into $\{x+y < z\} \cup \{x+y \geq z, y<z\}$. For the second integral of \eqref{jt.s}, we write the integration interval as $\{y+z < T\} = \{ x+y < T, x \leq y \} \cup \{ x+y < T, 0 < x- y < z \} \cup \{ x+y < T, x-y \geq z \} \cup \{x+y \geq T\} \backslash \{y+z \geq T\}$. In this way, we can split $J_T$ into seven integrals. It turns out  that some of them are bounded by a constant independent of $T$ and they do not contribute to the limit, because  $h(T) \to 0$. More precisely, we  can derive the following bounds:
\begin{eqnarray*}
 \int_{[0,T]^3, x+y \geq z, y<z} f_1(x,y,z)dxdydz & \leq & \int_{[0,T]^3, x+y \geq z} f_1(x,y,z)dxdydz \\
 & = & C \int_{[0,T]^2} e^{-\theta(x+y)} (x+y)^{4H-3} dxdy \le  M \,,
\end{eqnarray*}
where  in the second step we  integrated in  $z$ and the last step follows  from the inequality $x+y \geq 2\sqrt{xy}$. It is trivial to show that
\[
  \int_{[0,T]^3, x+y \geq T} f_2(x,y,z) dxdydz \leq e^{-\theta T} \int_{[0,T]^3}  z^{2H-2} |y+z-x|^{2H-2} dxdydz \le  M \,,
\]
and 
\begin{eqnarray*}
  \int_{[0,T]^3, x+y < T, x-y \geq z} f_2(x,y,z) dxdydz & \leq & \int_{[0,T]^3,  x-y \geq z} e^{-\theta(x+y)} z^{2H-2}(x-y-z)^{2H-2} dxdydz \\
 & = & C \int_{[0,T]^2} e^{-\theta (x+y)} (x-y)^{4H-3} dxdy \le  M \,.
\end{eqnarray*}
The last bounded integral is 
\begin{eqnarray*}
 \int_{[0,T]^3, y+z \geq T} f_2(x,y,z) dxdydz &\leq& \int_{[0,T]^3, y+z \geq T} e^{-\theta (x+y)} z^{2H-2} (T-x)^{2H-2} dxdydz \\
 & \leq &  \left( \int_0^T e^{-\theta x} (T-x)^{2H-2} dx \right)^2 \le  M \,,
\end{eqnarray*}
where in the second step we have used the inequality $z^{2H-2} \leq (T-y)^{2H-2}$ and the last step follows from the following inequality
\[
\int_0^T e^{-\theta x} (T-x)^{2H-2} dx \leq \int_0^{T/2} e^{-\theta x} x^{2H-2}dx + \int_{T/2}^T  e^{-\theta (T-x)} (T-x)^{2H-2} dx \leq 2\int_0^{\infty} e^{-\theta x} x^{2H-2} dx \,.
\]
With these observations, 
\begin{eqnarray*}
\lim_{T\to \infty}h(T)J_T & = & \lim_{T \to \infty} h(T) \int_{x+y<z}f_1(x,y,z) dxdydz + \lim_{T \to \infty} h(T) \int_{\{ x+y < T, x \leq y \}}f_2(x,y,z) dxdydz  \\
& & + \lim_{T \to \infty} h(T) \int_{ \{ x+y < T, 0 < x- y < z \}} f_2(x,y,z) dxdydz \,.
\end{eqnarray*}
We make change of variables $z-(x+y) \to u, x+y \to v, y \to y$ for the first term, $y-x \to u, z \to v, y \to y$ for the second term, and $x-y \to u, z-x+y \to v, y \to y$ for the third term. In this way, we obtain
\begin{eqnarray*}
\lim_{T\to \infty}h(T)J_T & = & \lim_{T\to \infty} h(T) \int_{[0,T]^3, u+v<T, y<v} e^{-\theta v} (u+v)^{2H-2} u^{2H-2} dydudv \\
& & + \lim_{T\to \infty} h(T) \int_{[0,T]^3, u < y < (T+u)/2} e^{-\theta(-u+2y)} v^{2H-2} (u+v)^{2H-2} dydudv  \\
& & + \lim_{T\to \infty} h(T) \int_{[0,T]^3, u+v<T, y<(T-u)/2} e^{-\theta(u+2y)} (u+v)^{2H-2} v^{2H-2} dy dudv \,.
\end{eqnarray*}
Finally, the limits (\ref{e.lemm5.1-2}) and (\ref{e.lemm5.1-3}) follow from integrating in the variable $y$ and an application of  Lemma \ref{at}. \\

We proceed now to the proof of (\ref{e.lemm5.1-4}). Assume $H > 3/4$. Recall that $R_1 = I_2(\delta_{0,1})$ is given in Theorem \ref{thetaclt} and $\widetilde F_T$ is given by \eqref{e.ft2}. By \eqref{ip.rosenb}, we can write 
\begin{equation*}
\mathbb{E}( R_1 (T^{1-2H}\widetilde F_T)) = 2\alpha_H^2 T \int_{[0,1]^3} e^{-\theta T|t-s|} |t-t'|^{2H-2} |s-t'|^{2H-2}dsdtdt'\,.
\end{equation*}
We make the change of variables $Tt \to x, Ts \to y, Tt' \to z$ to rewrite the above equation as
\[
  \mathbb{E}(T^{1-2H} R_1 \widetilde F_T) = \frac{2\alpha_H^2}{T^{4H-2}} \int_{[0,T]^3} e^{-\theta |x-y|} |x-z|^{2H-2} |y-z|^{2H-2} dxdydz \,.
\]
By the symmetry of $x, y$ in the above equation, applying L'Hopital's rule  yields
\begin{eqnarray}
  \lim_{T \to \infty}\mathbb{E}(T^{1-2H} R_1 \widetilde F_T) & = & \frac{\alpha_H^2}{2H-1} \lim_{T \to \infty} T^{3-4H} \Big(2 \int_{[0,T]^2} e^{-\theta (T-y)} (T-z)^{2H-2} |y-z|^{2H-2} dydz \nonumber \\
  & & + \int_{[0,T]^2} e^{-\theta|x-y|} (T-x)^{2H-2} (T-y)^{2H-2} dxdy\Big) \\
  & =: & \frac{\alpha_H^2}{2H-1} \lim_{T \to \infty} T^{3-4H} (2 L_1 + L_2) \,.
\end{eqnarray}
To compute $L_1$, on the region $\{y > z\}$ we make the change of variables $y-z \to t, \; T - y \to s$  and on the region $\{ y < z \}$ we make the change of variables $z-y \to s, \; T-z \to t$. In this way we obtain
\begin{equation*}
  L_1 = \int_{[0,T]^2, s+t<T} e^{-\theta s} (s+t)^{2H-2} t^{2H-2} dsdt + \int_{[0,T]^2, s+t<T} e^{-\theta (s+t)} t^{2H-2} s^{2H-2} dsdt
\end{equation*}
For the term $L_2$, by  symmetry it is sufficient to consider the region $\{x>y\}$ and  making the change of variables $T - x \to t, \; x-y \to s$, we obtain
\[
  L_2 = 2 \int_{[0,T]^2, s+t <T} e^{-\theta s} t^{2H-2} (s+t)^{2H-2} dsdt \,.
\]
Notice that the second summand of $L_1$ is bounded by $\int_{[0,\infty)^2} e^{-\theta (s+t)} t^{2H-2} s^{2H-2} dsdt$. Therefore,
\begin{eqnarray*}
   \lim_{T \to \infty}\mathbb{E}(T^{1-2H} R_1 \widetilde F_T) & = & \frac{4\alpha_H^2}{2H-1} \lim_{T \to \infty} T^{3-4H} \int_{[0,T]^2, s+t<T} e^{-\theta s} (s+t)^{2H-2} t^{2H-2} dsdt \\
   & = & \frac{4\alpha_H^2 \theta^{-1}}{(2H-1)(4H-3)} \,,
\end{eqnarray*}
where the last step is due to Lemma \ref{at}.  This finishes the proof of Lemma \ref{lem5.1}.
\end{proof}

\begin{lem} \label{yt.lim}
 Let   $Y_T$ be defined by
   \begin{equation}
	   Y_t = \sigma \int^t_{-\infty} e^{-\theta (t-s)} dB_s^H = X_t + e^{-\theta t} \xi \, , \label{yt} 
   \end{equation}
   where
   \begin{equation}
	   \xi = \sigma \int^0_{-\infty}e^{\theta s}dB_s^H \, . \label{xi} 
   \end{equation}
     For any $\alpha > 0$,  $\dfrac{Y_T}{T^{\alpha}}$  
 converges almost surely to zero as $T$ tends to infinity.	
\end{lem}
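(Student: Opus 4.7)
\textbf{Proof plan for Lemma \ref{yt.lim}.} The process $\{Y_t,\; t \in \mathbb{R}\}$ defined by \eqref{yt} is a centered, stationary Gaussian process. The first step of my proof would be to verify that $v^2 := \mathbb{E}[Y_t^2]$ is finite and independent of $t$: this follows from a direct covariance computation via the change of variables $u_i = t - s_i$ (reducing the problem to a double integral on $[0,\infty)^2$ with the kernel $e^{-\theta(u_1+u_2)}$ against the covariance measure of $B^H$), or more quickly from the spectral representation. Consequently, every moment $\mathbb{E}[|Y_t|^p]$ is a finite constant depending only on $p$ and $v$.

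Fix $\alpha > 0$. Restricting first to integer times $T = n$, Markov's inequality yields
\[
P\bigl(|Y_n| > n^{\alpha}\bigr) \;\leq\; \frac{\mathbb{E}[Y_n^{2k}]}{n^{2k\alpha}} \;=\; \frac{(2k-1)!!\, v^{2k}}{n^{2k\alpha}},
\]
which is summable for any integer $k > 1/(2\alpha)$. The first Borel--Cantelli lemma then gives $Y_n/n^\alpha \to 0$ almost surely along integers.

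To upgrade this to arbitrary $T$, I would bound the local oscillation $M_n := \sup_{t \in [n, n+1]} |Y_t - Y_n|$. Writing $Y_t = \sigma B_t^H - \sigma\theta \int_{-\infty}^{t} B_s^H e^{-\theta(t-s)} ds$ and mimicking the calculation used in Lemma \ref{hd.xt}, one arrives at an estimate of the form $|Y_t - Y_s| \leq V_{1,n}\,|t-s|^{H-\epsilon} + V_{2,n}\,|t-s|$ for $s, t \in [n, n+1]$, where the random variables $V_{i,n}$ possess all moments and--by stationarity of $Y$--have distributions independent of $n$. Therefore $M_n$ equals $M_0$ in law, $\mathbb{E}[M_n^p] \leq C_p$ uniformly in $n$ for every $p$, and a second Markov/Borel--Cantelli argument yields $M_n/n^\alpha \to 0$ almost surely. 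The triangle inequality $|Y_T| \leq |Y_{\lfloor T \rfloor}| + M_{\lfloor T \rfloor}$ then completes the proof.

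The main delicate step will be the uniform-in-$n$ moment bound for $V_{1,n}$: unlike in Lemma \ref{hd.xt}, the defining integral for $Y_t$ extends back to $-\infty$, so the self-similarity estimate on the H\"older coefficient $\eta_T$ of $B^H$ is not directly available. The cleanest remedy is to argue through the stationarity of the Gaussian process $Y$ itself and apply the Borell--TIS concentration inequality to the almost surely continuous Gaussian field $\{Y_t\}_{t \in [0,1]}$; this guarantees $\mathbb{E}[M_0^p] < \infty$ for all $p \geq 1$, and stationarity then transfers the bound to all $M_n$.
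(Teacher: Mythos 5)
Your proposal is correct, but it takes a genuinely different route from the paper. The paper's proof is short and leans on a deep external result: it computes the covariance of the stationary Gaussian process $Y$ explicitly (via integration by parts and Fubini), extracts the local expansion $\cov(Y_0,Y_t)=\beta\bigl[1-\tfrac{\theta^{2H}}{\Gamma(2H+1)}t^{2H}+o(t^{2H})\bigr]$ as $t\to 0$, and then invokes Theorem 3.1 of Pickands on the maximum of a stationary Gaussian process, which yields the much stronger statement that $\sup_{0\le t\le T}Y_t$ grows only logarithmically. Your argument is softer and more self-contained: it needs only that $Y$ is stationary Gaussian with finite variance and continuous paths, and proceeds by Gaussian moment bounds plus Borel--Cantelli along integers, followed by control of the oscillation $M_n=\sup_{t\in[n,n+1]}|Y_t-Y_n|$ via stationarity and Borell--TIS. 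Your final paragraph correctly identifies that the Borell--TIS/stationarity route is the clean way to get the uniform moment bound on $M_n$ (the attempt to mimic Lemma \ref{hd.xt} would indeed be awkward because the integral extends to $-\infty$), and with that in place the argument closes. Two cosmetic points: the displayed Markov bound with threshold $n^\alpha$ only gives $\limsup|Y_n|/n^\alpha\le 1$; you should run it with threshold $\epsilon n^\alpha$ for each $\epsilon>0$ (or apply it with a smaller exponent $\alpha'<\alpha$), which is immediate. What each approach buys: the paper's proof, at the cost of the covariance computation and an appeal to Pickands, gives the sharp $\sqrt{\log T}$ growth rate of the running maximum; yours proves exactly what the lemma asserts with elementary tools and no information about the local modulus of the covariance, and would generalize to any continuous stationary Gaussian process without further computation.
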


\begin{proof}
 The case  $H \geq \frac{1}{2}$ was  proved  in \cite{hn}.  Here,  we present a different  proof  valid for all
 $H \in (0,1)$. 
%
    We denote $\beta := \mathbb{E} \xi^2 = \sigma^2 \theta^{-2H} H\Gamma(2H)$, which is computed in Lemma \ref{xt2}. Notice that the covariance of the process $Y_t$ for $t>0$ is computed as
  \begin{eqnarray*}
  	\cov(Y_0, Y_t) & = & e^{-\theta t} \mathbb{E}\Big(\xi \Big[\xi + \sigma \int^t_0 e^{\theta u} dB^H_u\Big]\Big) \\
	& = & e^{-\theta t} \beta + e^{-\theta t} \sigma^2 \mathbb{E}\Big(\int^0_{-\infty} e^{\theta s} dB_s^H \int^t_0 e^{\theta u}dB_u^H\Big) \,. 
  \end{eqnarray*}
 We use integration by parts for both integrals in the above equation to rewrite
    \[
    	\cov(Y_0, Y_t) = e^{-\theta t} \beta + g_1(t) - g_2(t) \, ,
    \]
 where 
    \[
	g_1(t) = e^{-\theta t} \sigma^2 \theta^2 \mathbb{E}\Big(\int^0_{-\infty} \int^t_0 B_s^H B_u^H e^{\theta (u+s)}duds \Big) \,, \quad
	g_2(t) = \sigma^2 \theta \mathbb{E}\Big(\int^0_{-\infty} B_s^H B_t^H e^{\theta s} ds\Big) \,.
	\]
 By Fubini theorem and the explicit form of the covariance of fBm,
  \begin{eqnarray*}
	g_1(t) & = & \frac{1}{2} e^{-\theta t} \sigma^2 \theta^2 \int^0_{-\infty} \int^t_0 (|s|^{2H}+ u^{2H}-(u-s)^{2H}) e^{\theta (u+s)}duds \\
	& = & \beta(1-e^{-\theta t}) + \frac{1}{2}e^{-\theta t} \sigma^2 \theta \int_0^t e^{\theta u}u^{2H} du - \frac{\beta}{2} (e^{\theta t} - e^{-\theta t}) \,.
  \end{eqnarray*}
 When we compute the above double integral, we write the integrand as three items by distributing $e^{\theta (u+s)}$ and then integrate the terms one by one. For the term involving $(u-s)^{2H}$, we make the change of variables $u-s \to x, s \to y$ and integrate in the variable $y$ first.  Similarly,
  \begin{eqnarray*}
    g_2(t) & = & \frac{1}{2} \sigma^2 \theta \int^0_{-\infty}  (|s|^{2H}+ t^{2H}-(t-s)^{2H}) e^{\theta s} ds \\
	       & = & \beta + \frac{1}{2} \sigma^2 t^{2H} - \beta e^{\theta t} + \frac{1}{2}\sigma^2 \theta e^{\theta t} \int_0^t e^{-\theta s} s^{2H} ds \,.
  \end{eqnarray*}
Denote $a_t = o(b_t)$ if $\lim_{t \to 0} \frac{a_t}{b_t} = 0$. Notice that $\int_0^t e^{\theta(u-t)}u^{2H} du - \int_0^t e^{\theta(t-s)}s^{2H} ds = o(t^{2H})$. Based on the above computations, for $t$ small, we have
 \[
  \cov(Y_0, Y_t) = \beta \Big[1 - \frac{\theta^{2H}}{\Gamma(2H+1)}t^{2H}+o(t^{2H})\Big] \,.
 \]  
The lemma now follows  from Theorem 3.1 of Pickands \cite{pi}. \\
\end{proof}

\begin{lem} \label{xt2}  Let the stochastic process $X_t$ satisfy \eqref{e.ou-time-varying} (with $\si_t=\si$).  Then  
  $\displaystyle\frac{1}{T}\int^T_0 X_t^2 dt \rightarrow \sigma^2\theta^{-2H}H\Gamma(2H)$ a.s. and in $L^2$, as $T \rightarrow \infty$. 	
\end{lem}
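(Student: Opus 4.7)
The plan is to exploit that $X_t$ differs from the stationary fractional Ornstein--Uhlenbeck process $Y_t$ of Lemma \ref{yt.lim} only by the exponentially decaying correction $e^{-\theta t}\xi$, and to apply an ergodic theorem to $Y_t^2$. Writing $X_t^2 = Y_t^2 - 2\xi e^{-\theta t}Y_t + \xi^2 e^{-2\theta t}$, the last two terms contribute negligibly to $\frac{1}{T}\int_0^T X_t^2 dt$ both almost surely and in $L^2$: the $\xi^2$ piece is dominated by $\xi^2/(2\theta T)$, while for the cross term, setting $A_T := \int_0^T e^{-\theta t}Y_t dt$, Cauchy--Schwarz and the Gaussianity of $A_T$ give
\[
\mathbb{E}\Big|\tfrac{\xi}{T}A_T\Big|^2 \le \tfrac{1}{T^2}\sqrt{\mathbb{E}[\xi^4]\,\mathbb{E}[A_T^4]} = O(T^{-2}),
\]
since $\mathbb{E}[A_T^2]\le r(0)/\theta^2$ with $r(\tau):=\cov(Y_0,Y_\tau)$. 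Hence it suffices to show $\frac{1}{T}\int_0^T Y_t^2 dt \to \mathbb{E}[Y_0^2] = \sigma^2\theta^{-2H}H\Gamma(2H)$; the target value is exactly the constant $\beta=\mathbb{E}[\xi^2]$ already computed in the proof of Lemma \ref{yt.lim}, as $Y_0 = \xi$ in law.

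For the almost sure convergence to the stationary mean, $Y$ is a centered stationary Gaussian process whose covariance $r(\tau)$ tends to $0$ as $|\tau|\to\infty$, which can be read from the decomposition $\cov(Y_0,Y_\tau) = e^{-\theta\tau}\beta + g_1(\tau) - g_2(\tau)$ appearing in the proof of Lemma \ref{yt.lim}. By Maruyama's theorem, a stationary Gaussian process with vanishing covariance at infinity is ergodic, so the continuous-parameter Birkhoff--Khinchin theorem applied to the functional $y\mapsto y^2$ yields the almost sure limit. For the $L^2$ part, Wick's formula gives $\cov(Y_s^2,Y_t^2) = 2 r(t-s)^2$, whence
\[
\mathrm{Var}\Big(\tfrac{1}{T}\int_0^T Y_t^2 dt\Big) = \tfrac{4}{T^2}\int_0^T (T-\tau) r(\tau)^2 d\tau \le \tfrac{4}{T}\int_0^T r(\tau)^2 d\tau,
\]
and a direct estimate shows $r(\tau) = O(\tau^{2H-2})$ as $\tau\to\infty$ (exponential decay at $H=\tfrac12$), so $\int_0^T r^2$ is $O(1)$ for $H<\tfrac34$, $O(\log T)$ at $H=\tfrac34$, and $O(T^{4H-3})$ for $H>\tfrac34$; each of these is $o(T)$ for every $H\in(0,1)$, giving the desired $L^2$ convergence.

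The main obstacle is the uniform control of the large-$\tau$ decay of $r(\tau)$, especially as $H$ approaches $1$ where the margin in $T^{4H-3}=o(T)$ shrinks and the individual pieces $g_1,g_2$ in the covariance each grow before cancelling. A clean route is the spectral formula
\[
r(\tau) = \tfrac{\sigma^2}{c_H^2}\int_{\mathbb{R}} \tfrac{e^{i\tau\xi}}{\theta^2 + \xi^2}|\xi|^{1-2H} d\xi,
\]
obtained from \eqref{sfbm} with $\varphi_t(s) = e^{-\theta(t-s)}\mathbf{1}_{(-\infty,t]}(s)$; the low-frequency factor $|\xi|^{1-2H}$ near $\xi=0$ governs the asymptotics, producing $r(\tau)\sim c(H,\theta,\sigma)\tau^{2H-2}$ by a standard tauberian/stationary-phase argument. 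Alternatively one may extract the large-$\tau$ expansion by mimicking the integration-by-parts computation of $\cov(Y_0,Y_t)$ already performed in the proof of Lemma \ref{yt.lim} but isolating the non-cancelling polynomial tail rather than the small-$t$ expansion.
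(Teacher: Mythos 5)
Your proof is correct and follows the same route as the paper's: write $X_t=Y_t-e^{-\theta t}\xi$ with $Y$ the stationary fractional Ornstein--Uhlenbeck process, apply the ergodic theorem to $Y_t^2$, and identify the limit as $\mathbb{E}(Y_0^2)=\mathbb{E}(\xi^2)$. The paper's version is much terser --- it simply asserts that $Y$ is ``Gaussian, stationary and ergodic for all $H\in(0,1)$'' and invokes the ergodic theorem for both the a.s.\ and the $L^2$ convergence --- whereas you supply the pieces it leaves implicit: the explicit negligibility of the two correction terms, Maruyama's criterion (covariance vanishing at infinity) to justify ergodicity, and a direct variance bound via Wick's formula and the decay $r(\tau)=O(\tau^{2H-2})$ for the $L^2$ part; the spectral representation you propose for controlling $r(\tau)$ is the standard one (it is exactly what \eqref{sfbm} gives with $\mathcal{F}\varphi_t(\xi)=e^{-it\xi}/(\theta-i\xi)$, and the asymptotics are those of Cheridito--Kawaguchi--Maejima \cite{ckm}), so that route is sound. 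One small correction: the value $\mathbb{E}(\xi^2)=\sigma^2\theta^{-2H}H\Gamma(2H)$ is \emph{not} established in the proof of Lemma \ref{yt.lim} --- that proof introduces $\beta=\mathbb{E}\xi^2$ and explicitly defers its computation to Lemma \ref{xt2}, i.e.\ to the lemma you are proving --- so you should carry out the short computation yourself (integrate by parts to get $\theta^2\sigma^2\int_0^\infty\!\int_0^\infty e^{-\theta(s+r)}R_H(s,r)\,ds\,dr$ and evaluate the three resulting terms) rather than cite it, to avoid a circular reference.
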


\begin{proof} When  $H\geq \frac{1}{2}$,  the Lemma is  proved in \cite{hn}. 
  We shall handle the 
 case of general Hurst parameter in a similar way. 
The   process $\{Y_t, t\geq 0\}$  defined by \eqref{yt} 
 is Gaussian, stationary and ergodic for all $H \in (0,1)$.  By the ergodic theorem, 
 \[
 \frac{1}{T} \int^T_0 Y_t^2 dt \rightarrow \mathbb{E}(Y_0^2),\quad \hbox{as $T$ goes to infinity,} 
 \]
almost surely and in $L^2$.  This implies $$\frac{1}{T} \int^T_0 X_t^2 dt \rightarrow \mathbb{E}(Y_0^2),$$ as $T$ goes to infinity, almost surely and in $L^2$.  Moreover, integrating by parts yields 
\begin{eqnarray*}
\mathbb{E}(Y_0^2) =
\mathbb{E}(\xi^2) &=&  \sigma^2 \mathbb{E}\Big(\int^0_{-\infty}e^{\theta s}dB_s^H\Big)^2 = \theta^2 \sigma^2 \mathbb{E} \int^0_{-\infty} \int^0_{-\infty} B_s^H B_r^H e^{\theta (s+r)}dsdr\\
&=& \theta^2 \sigma^2 \int^{\infty}_0 \int^{\infty}_0 e^{-\theta (s+r)} R_H(s,r) dsdr = \sigma^2 \theta^{-2H} H \Gamma(2H)\,.
\end{eqnarray*}
In the last step of the above computation, we use the same idea as near the end of the proof for Lemma \ref{yt.lim}.  Namely,
one writes out the explicit form of $R_H(s,r)$, split the integrand into three items by distributing $e^{-\theta (s+r)}$ to the summands of $R_H(s,r)$, and then integrate the three items one by one.  For the item involving $|s-r|^{2H}$, noticing the symmetry of $s,r$, one can make change of variables $s-r \to u, r \to v$, and then integrate in the variable $v$ first.
\end{proof}






\begin{minipage}{1.0\textwidth}
\vskip 1cm
Yaozhong Hu, David Nualart and Hongjuan Zhou: Department of Mathematics, University of Kansas, 405 Snow Hall, Lawrence, Kansas, 66045, USA. 

{\it E-mail address:} yhu@ku.edu,    nualart@ku.edu,  zhj@ku.edu
\end{minipage}

\end{document}